\newtheorem{theo}{Theorem}[section]
\newtheorem{lemm}[theo]{Lemma}
\numberwithin{equation}{section}
\begin{document}

\title[Multiple Cavities Scattering Problem]{Analysis  of Time-domain  Electromagnetic Scattering Problem   by  Multiple Cavities}

\author{Yang Liu}
\address{School of Mathematics and Statistics, and Center for Mathematics
and Interdisciplinary Sciences, Northeast Normal University, Changchun,
Jilin 130024, P.R.China}
\email{Liuy694@nenu.edu.cn}

\author{Yixian Gao}
\address{School of Mathematics and Statistics, and Center for Mathematics
and Interdisciplinary Sciences, Northeast Normal University, Changchun,
Jilin 130024, P.R.China}
\email{gaoyx643@nenu.edu.cn}

\author{Jian Zu}
\address{School of Mathematics and Statistics, and Center for Mathematics
and Interdisciplinary Sciences, Northeast Normal University, Changchun,
Jilin 130024, P.R.China}
\email{zuj100@nenu.edu.cn}

\thanks{The research of YG was supported in part by NSFC grant 11871140, JJKH20180006KJ, JLSTDP 20190201154JC, 20160520094JH and FRFCU2412019BJ005. The research of JZ was supported in part by
NSFC grand  11571065,11671071.
}
\keywords{Helmholtz equation, wave equation,  multiple  cavities,  stability, a priori estimates}

\begin{abstract}
Consider the time-domain multiple cavity scattering problem, which arises in diverse scientific areas and
has significant industrial and military applications.
The  multiple cavity embedded in an infinite ground plane, is filled with inhomogeneous media characterized by variable  dielectric permittivities and magnetic permeabilities.
Corresponding to the transverse  electric or
magnetic  polarization,
the scattering problem can be studied for the Helmholtz equation in frequency domain and
 wave  equation in time-domain, respectively.
 A novel  transparent boundary condition in time-domain  is developed to reformulate
  the cavity scattering problem into an initial-boundary value problem in a bounded domain.
  The well-posedness and stability are  established for the reduced  problem.
  Moreover, a priori estimates for the electric field is obtained with a minimum requirement for the data  by directly
studying the wave equation.

\end{abstract}
\maketitle

\section{Introduction}

This paper is concerned with the mathematical analysis of the time-domain electromagnetic scattering problem of multiple  cavities,
 which is embedded  in a conducting ground planes.
The  cavity scattering problem arises in diverse scientific areas and has significant industrial and military applications,
including the design of cavity-backed conformal antennas for civil and military use, and the characterization of radar cross-section (RCS) of vehicles with grooves, especially to design RCS. It is used to detect airplanes in a wide variation of ranges.
For instance, a stealth aircraft will have design features that give it a low RCS, as opposed to a passenger airliner that will have a high RCS.
RCS is integral to the development of radar stealth technology, particularly in applications involving aircraft and ballistic missiles.
The cavity RCS caused by jet engine inlet ducts or cavity-backed antennas can dominate the total RCS.
A thorough understanding of the electromagnetic scattering characteristic of a target, particularly a cavity,
is necessary for successful implementation of any desired control of its RCS.

The descriptions of cavity scattering problem were centered on methods developed in the time-harmonic   and time-domain.
For the  time-harmonic problems were introduced  firstly by engineers \cite{Jin2002,Jin1998,Jin2003A,Jin1991,wood03}. The
mathematical  analysis of  the cavity scattering problem was given by   three fundamental papers \cite{Ammari2000,AmmariBao2002,AmmariBaoWood2002},
where the existence and uniqueness of the solutions  were obtained based on a non-local transparent boundary condition on the cavity opening.
A large amount of information was available regarding their solutions
for both the two-dimensional Helmholtz and the three-dimensional Maxwell equations\cite{Bao2005An,BaoGao2011,BaoGao2012,Baosun2005,Li2012,LiWuZheng2012,LiandWood2013,wood2017}.
A good survey to the problem of cavity scattering  can be found in \cite{Li2018}.
The time-domain scattering problems have recently attracted considerable
attention due to their capability of capturing wide-band signals and modeling more
general material and nonlinearity\cite{Chen2008,Jin2009Finite,LiandHuang2013,WangZhangZhao2012}, which motivates us to tune our focus
from seeking the best possible conditions for those physical parameters to the time-domain
problem. Comparing with the time-harmonic problems, the time-domain
problems are less studied due to the additional challenge of the temporal dependence.
The analysis can be found in \cite{GaoLi2016,GaoLi2017,GaoLiLi2018,BaoGao2018,GaoLiZhang2017,Wei2019} for the time-domain
acoustic, elastic and electromagnetic scattering problems in different structures including bounded obstacles, periodic surfaces, and unbounded rough surfaces.
 Inspired by the one  open cavity structure  in \cite{LiWood2015}, we extends the results to  the  multiple cavity  scattering problem.
 It appears more  complicated because of the unbounded nature of the domain and
 the novel transparent boundary condition on multiple apertures.
Utilizing the Laplace transform as a bridge between the time-domain and the frequency domain,
we develop an exact time-domain transparent boundary condition (TBC) and reduce the scattering problem equivalently into an initial boundary value problem in a bounded domain.
Using the energy method with new energy functions, we can show the well-posedness and stability of  the time-domain multiple  cavity scattering problem.

The paper is organized as follow.
 In section \ref{FRP1}, we introduce the model problem of one cavity scattering problem
  and establish  a time-domain TBC. Section \ref{TSP} is concentrated on the analysis of two cavities scattering problem, while  the well-posedness and stability  are addressed  in both the frequency and time-domain. The multiple cavity  problem is proposed in section \ref{FRP}, while a priori estimates with explicit time dependence for the quantities of electric  filed is obtained with a minimum requirement for the data  by directly studying the wave equation.
  We conclude the paper with some remarks  in section \ref{CL}.

\section{one cavity scattering problem}\label{FRP1}

In this section, we shall introduce  the mathematical model
for a single cavity scattering problem and develop an exact TBC to
reduce the scattering problem from an unbounded domain into a bounded domain.

\subsection { Problem formulation}
Consider a simpler model for the open cavity scattering problem  by assuming  that the medium and material are invariant   along the $z$-axis.
Let $\Omega \subset \mathbb R^2$ be the cross section of a $z$-invariant cavity with  a Lipschitz continuous boundary $\partial \Omega =S \cup \Gamma$, as seen in Figure \ref{fig1}.
The cavity is filled with some inhomogeneous medium, characterized by the variable  dielectric permittivity $\varepsilon (x, y)$ and magnetic permeability $\mu (x, y)$. The exterior region $\Omega^{\rm e}$  is filled with some homogeneous
material with a constant permittivity $\varepsilon_0$ and a constant permeability $\mu_0$.
Here the
cavity wall $S$ is assumed to be a perfect electric conductor and the cavity opening $ \Gamma$ is aligned
with the perfectly electrically conducting infinite ground surface $\Gamma^{\rm c}$.
An open cavity $\Omega$, enclosed by the aperture $\Gamma$ and the wall $S$, is placed on a perfectly conducting ground plane $\Gamma^{\rm c}$.

\begin{figure}
\centering
\includegraphics[width=0.45\textwidth]{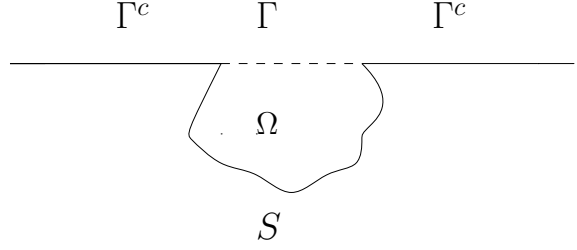}
\caption{The problem geometry of the one cavity.}
\label{fig1}
\end{figure}

The electromagnetic wave propagation is governed by the
 time-domain Maxwell equations
\begin{align}\label{TMW}
\begin{cases}
\nabla \times \boldsymbol {E}(\boldsymbol r, t) +\mu \partial_t \boldsymbol {H}(\boldsymbol r, t)=0,\\
\nabla \times \boldsymbol {H} (\boldsymbol r, t)-\varepsilon \partial_t \boldsymbol
{E}(\boldsymbol r, t)= 0,
 \end{cases}
\end{align}
where $\boldsymbol r= (x, y, z) \in \mathbb R^3,$
$\boldsymbol E$ is the electric field, $\boldsymbol H$ is the magnetic
field, $\varepsilon $ and $\mu$ are the dielectric permittivity and magnetic permeability, respectively,
and  satisfy
\[
0 < \varepsilon_{\rm min} \leq \varepsilon \leq \varepsilon_{\rm max}<
\infty,\quad 0 < \mu_{\rm min} \leq \mu \leq \mu_{\rm max} < \infty,
\]
while  $\varepsilon_{\rm min}, \varepsilon_{\rm max}, \mu_{\rm min}, \mu_{\rm
max}$ are constants.
The system is constrained by the initial conditions
\begin{align}\label{ic1}
 \boldsymbol E \big|_{t=0} =0,\quad \boldsymbol H \big|_{t=0}=0.
\end{align}

Since the structure is invariant  in the $z$-axis, the problem can be decomposed into two fundamental polarizations: transverse electric (TE) and transverse
magnetic (TM). The three-dimensional  Maxwell equations can be reduced to  the two-dimensional wave equation.

(i) TE polarization: the magnetic field is transverse to the $z$-axis, the electric and magnetic fields are
\begin{align*}
 \boldsymbol E (\boldsymbol r, t)= [0, 0 , u (\boldsymbol \rho, t) ]^{\top}, \quad \boldsymbol H (\boldsymbol r, t) = [H_1 (\boldsymbol \rho, t), H_2 (\boldsymbol \rho, t), 0]^{\top},
\end{align*}
where $ \boldsymbol \rho =(x, y) \in \mathbb R^2.$
Eliminating the magnetic field from \eqref{TMW}, we get the wave equation for the electric field
\begin{align}\label{ew}
 \varepsilon \partial_t^2 u - \nabla \cdot \left( \mu^{-1} \nabla u \right)= 0 \quad \text {in }~~ \Omega^{\rm e} \cup \Omega,~~t>0.
\end{align}
By the perfectly conducting boundary condition on the ground plane and cavity wall we can get
\begin{align*}
 u=0 \quad \text {on} ~~~~~S\cup\Gamma^{\rm c}  , ~~~~t>0.
\end{align*}
It follows from  the initial condition \eqref{ic1} that $u (\boldsymbol \rho, t)$ satisfies the homogeneous initial conditions
\begin{align*}
 u (\boldsymbol \rho, t) \big|_{t=0} =0, \quad \partial_t u (\boldsymbol \rho, t) \big|_{t=0}= 0 \quad \text {in}~~\Omega^{\rm e} \cup \Omega.
\end{align*}

(ii) TM polarization: the electric  field is transverse to the $z$-axis, the electric and magnetic fields are
\begin{align*}
 \boldsymbol E (\boldsymbol r, t)= \left[E_1 (\boldsymbol \rho, t), E_2 (\boldsymbol \rho, t), 0 \right]^{\top},
 \quad \boldsymbol H (\boldsymbol r, t) = \left[0, 0, u(\boldsymbol \rho, t) \right]^{\top}.
\end{align*}
We may eliminate the electric field from \eqref{TMW} and obtain the wave equation for the magnetic field
\begin{align}\label{hw}
 \mu \partial_t^2 u -\nabla \cdot \left( \varepsilon^{-1}\nabla u\right)=0 \quad \text {in}~~~~\Omega^{\rm e} \cup \Omega, ~~~~t>0.
\end{align}
It also follows from the  perfectly conducting boundary condition on the ground plane and cavity wall that
\begin{align*}
 \partial_{\boldsymbol \nu} u =0 \quad  \text {on}~~S\cup \Gamma^{\rm c} , ~~~~t>0,
\end{align*}
where $\boldsymbol \nu$ is the unit outward normal vector on $S\cup \Gamma^{\rm c}.$   The initial conditions  for the TM  is
\begin{align*}
 u (\boldsymbol \rho, t) \big|_{t=0}=0, \quad  \partial_t u (\boldsymbol \rho, t) \big|_{t=0}=0  \quad \text {in}~~~\Omega^e\cup\Omega.
\end{align*}

It is clear to note from \eqref{ew} and \eqref{hw} that  TE and TM polarizations can be handled in a
unified way by formally exchanging the roles of $\varepsilon$ and $\mu$. We will just discuss the results in detail by using \eqref{ew} (TE case)
as the model equation in the rest of the paper. The method can be extended to the TM polarization
with obvious modifications.

Let an incoming plane wave $u^{\rm inc}=f(-t-c_1 x-c_2 y)$ be incident on the cavity from above, where  $ f$ is a smooth function and its regularity will be specified later,  and $c_1=\cos \theta/\sqrt{\varepsilon_0 \mu_0}, c_2= \sin \theta/\sqrt{\varepsilon_0 \mu_0}, 0<\theta<\pi.$  Clearly, the incident field satisfies the wave equation \eqref{ew} with $\varepsilon =\varepsilon_0, \mu =\mu_0.$ The total field can be split into
the incident field, the reflected field and the scattered field:
\begin{align*}
u=u^{\rm inc} +u^{\rm r}+u^{\rm sc},
\end{align*}
where  $u^{\rm r}=-f (-t-c_1 x +c_2 y)$ (or $u^{\rm r}=f (-t-c_1 x +c_2 y)$ ) is the reflected field in TE (or TM) case .
To impose
the initial conditions, we assume that the total field, the incident field and the reflected field  vanish for $t<0$, so that the
scattered field $u^{\rm sc}=0$ for $t<0$.
 Moreover, the   scattered field is required to  satisfies the Sommerfeld radiation condition:
\begin{align}\label{SR}
 \frac{1}{\sqrt{\varepsilon_0 \mu_0}}\partial_r u^{\rm sc} +  \partial_t u^{\rm sc} = o(r^{-1/2}) \quad \text {as}~~r= |\boldsymbol \rho| \rightarrow \infty,~~~t>0.
\end{align}

To analyze the problem, the open domain needs to be truncated into a bounded domain. Therefore, a suitable boundary condition has to be imposed on the
boundary of the bounded domain so that no artificial wave reflection occurs.
We shall present a transparent boundary condition on the open domain enclosing the inhomogeneous
cavity.

\subsubsection{ Laplace transform  and some notation}
We first introduce the Laplace transform  and present some identities  for the transform.
For any $s=s_1+{\rm i}s_2$ with $s_1,
s_2\in\mathbb{R},~s_1>0, ~{\rm i} =\sqrt{-1}$, define by $\breve{u}(s)$ the Laplace transform of the
function $u(t)$, i.e.,
\[
 \breve{u}(s)=\mathscr{L}(u)(s)=\int_0^\infty e^{-st}u(t){\rm d}t.
\]
Using the integration by parts yields
\begin{align*}
\int_0^t  u (\tau) {\rm d} \tau =\mathscr L^{-1} (s^{-1} \breve { u} (s)),
\end{align*}
where $\mathscr L^{-1}$ is the inverse Laplace transform.
One verify form the formula of the inverse Laplace transform that
\begin{align*}
 u(t)=\mathscr F^{-1} \left( e^{s_1 t} \mathscr L ( u) (s_1+ {\rm i} s_2)\right),
\end{align*}
where $\mathscr F ^{-1}$ denotes the inverse Fourier transform with respect to $s_2.$

Recalling  the
Plancherel or Parseval identity for the Laplace transform (cf.
\cite[(2.46)]{02})
 \begin{equation}\label{PI}
 \frac{1}{2 \pi} \int_{- \infty}^{\infty} \breve {u} (s)  \breve {v} (s) {\rm d} s_2= \int_0^{\infty} e^{- 2 s_1 t}
  { u} (t)
 { v} (t) {\rm d} t,\quad\forall ~ s_1>\sigma_0>0,
 \end{equation}
where $\breve  { u}= \mathscr L ( u), \breve { v}= \mathscr L ( v)$ and $\sigma_0$ is
abscissa of convergence for the Laplace transform of $ u$ and $ v.$

Hereafter, the expression $a \lesssim  b$ stands for $a \leq  C b$, where $C$ is a positive
constant and its specific value is not required but should be always clear from the context.

The following lemma (cf.\cite[Theorem 43.1]{04})  is an
analogue of Paley--Wiener--Schwarz theorem for Fourier transform of the
distributions with compact support in the case of Laplace transform.
\begin{lemm}\label {A2}
 Let $\breve {\boldsymbol h} (s)$ denote a holomorphic function in the half-plane
$s_1 > \sigma_0$ , valued in the Banach space $\mathbb E$. The two following conditions are
equivalent
\begin{enumerate}

\item there is a distribution $ \breve {\boldsymbol h} \in \mathcal D_{+}'(\mathbb E)$ whose Laplace
transform is equal to $\breve h(s)$,

\item there is a real $\sigma_1$ with $\sigma_0 \leq \sigma_1 <\infty$ and an
integer $m \geq 0$ such that for all complex numbers $s$ with ${\rm Re} ~s =s_1 >
\sigma_1,$ it holds that $\| \breve {\boldsymbol  h} (s)\|_{\mathbb E} \lesssim (1+|s|)^{m}$,

\end{enumerate}
where $\mathcal D'_{+}(\mathbb E)$ is the space of distributions on the real
line which vanish identically in the open negative half line.
\end{lemm}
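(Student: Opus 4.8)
The plan is to prove the two implications separately, mimicking the classical Paley--Wiener--Schwarz theorem but with the Fourier transform replaced by the Laplace transform and the ``compact support'' hypothesis replaced by ``support in the positive half-line.'' For the direction $(1)\Rightarrow(2)$, I would start from a distribution $T\in\mathcal D_{+}'(\mathbb E)$ whose Laplace transform is $\breve{\boldsymbol h}$, and fix any $\sigma_1>\sigma_0$. Since $\sigma_0$ bounds the half-plane of convergence, the weighted distribution $e^{-\sigma_1 t}T$ is a tempered $\mathbb E$-valued distribution, so the structure theorem lets me write $e^{-\sigma_1 t}T=\sum_{k=0}^{m}\partial_t^k g_k$ with each $g_k$ an $\mathbb E$-valued continuous function of at most polynomial growth; because $T$ vanishes on the negative half-line, the $g_k$ may be taken to vanish there too. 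Writing $\breve{\boldsymbol h}(s)=\langle e^{-\sigma_1 t}T,\,e^{-(s-\sigma_1)t}\rangle$ and integrating by parts (the boundary terms vanish thanks to the support in $[0,\infty)$ and the decay of $e^{-(s-\sigma_1)t}$ for $s_1>\sigma_1$), each summand becomes $(s-\sigma_1)^k\int_0^\infty g_k(t)e^{-(s-\sigma_1)t}\,{\rm d}t$. Estimating the integral by $\int_0^\infty\|g_k(t)\|_{\mathbb E}\,e^{-(s_1-\sigma_1)t}\,{\rm d}t$ and using the polynomial growth of $g_k$ shows it is uniformly bounded for $s_1\ge\sigma_1+\delta$, whence $\|\breve{\boldsymbol h}(s)\|_{\mathbb E}\lesssim\sum_{k=0}^{m}|s-\sigma_1|^{k}\lesssim(1+|s|)^{m}$, which is precisely $(2)$.

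For the converse $(2)\Rightarrow(1)$, I would reconstruct $T$ by Fourier inversion along vertical lines. Fix $s_1>\sigma_1$; the polynomial bound makes $s_2\mapsto\breve{\boldsymbol h}(s_1+{\rm i}s_2)$ a tempered $\mathbb E$-valued distribution, so I can set $T:=e^{s_1 t}\,\mathscr F^{-1}_{s_2\to t}\bigl[\breve{\boldsymbol h}(s_1+{\rm i}s_2)\bigr]$. The first task is to verify that $T$ does not depend on the choice of $s_1>\sigma_1$: pairing against a test function $\phi$ and using $e^{s_1 t}e^{{\rm i}s_2 t}=e^{(s_1+{\rm i}s_2)t}$, the value can be rewritten as the contour integral $\frac{1}{2\pi{\rm i}}\int_{{\rm Re}\,s=s_1}\breve{\boldsymbol h}(s)\Phi(s)\,{\rm d}s$ with $\Phi(s)=\int\phi(t)e^{st}\,{\rm d}t$ entire; Cauchy's theorem together with the polynomial growth of $\breve{\boldsymbol h}$ then shows that translating the contour in $s_1$ leaves the value unchanged. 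Granting the support property below, the Laplace transform of this $T$ equals $\breve{\boldsymbol h}$ by the very construction.

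The heart of the argument, and the step I expect to be the main obstacle, is showing that $T$ vanishes on the open negative half-line, i.e. that $T\in\mathcal D_{+}'(\mathbb E)$. For a test function with ${\rm supp}\,\phi\subset(-\infty,-\delta]$, the entire function $\Phi(s)=\int\phi(t)e^{st}\,{\rm d}t$ satisfies $|\Phi(s)|\lesssim(1+|s|)^{-N}e^{-\delta\,{\rm Re}\,s}$ for every $N$, since on the support $|e^{st}|=e^{({\rm Re}\,s)t}\le e^{-\delta\,{\rm Re}\,s}$ and repeated integration by parts in $t$ supplies the polynomial decay in $|s|$. Expressing $\langle T,\phi\rangle$ as the contour integral above and pushing the contour ${\rm Re}\,s=s_1$ toward $+\infty$, the polynomial growth of $\breve{\boldsymbol h}$ is overwhelmed by the exponential decay of $\Phi$, so the integral tends to zero and $\langle T,\phi\rangle=0$. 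Making this contour shift fully rigorous---simultaneously justifying the holomorphy, the growth estimates, and the Banach-space-valued Fourier inversion calculus---is the delicate point; once it is in place, the equivalence of $(1)$ and $(2)$ follows.
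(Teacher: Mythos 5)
The paper does not prove this lemma: it is quoted verbatim, with a citation to Tr\`eves \cite[Theorem 43.1]{04}, and is used as a black box, so there is no in-paper argument to compare yours against. Judged on its own, your sketch is the standard proof of the (Banach-space-valued) Laplace--Paley--Wiener theorem and is essentially sound; in particular the contour-shift argument for the support property in $(2)\Rightarrow(1)$, which you rightly flag as the delicate step, works exactly as you describe, since the Paley--Wiener bound $\|\Phi(s)\|\lesssim (1+|s|)^{-N}e^{-\delta\,{\rm Re}\,s}$ beats the polynomial growth of $\breve{\boldsymbol h}$ once $N\geq m+2$.

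Two points in $(1)\Rightarrow(2)$ deserve more care than you give them. First, $\int_0^\infty(1+t)^p e^{-(s_1-\sigma_1)t}\,{\rm d}t$ blows up as $s_1\downarrow\sigma_1$, so your estimate is only uniform on $s_1\geq\sigma_1+\delta$; this is harmless---statement (2) only asserts the existence of \emph{some} $\sigma_1$---but you must explicitly relabel $\sigma_1+\delta$ as the new $\sigma_1$. Second, the generic structure theorem for tempered distributions does not let you take the $g_k$ supported in $[0,\infty)$: it only localizes supports to an $\epsilon$-neighborhood of ${\rm supp}\,T$, and the resulting factor $e^{\epsilon(s_1-\sigma_1)}$ coming from the interval $[-\epsilon,0]$ would destroy the polynomial bound as $s_1\to\infty$. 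The clean fix is to take iterated primitives instead of the structure theorem: set $F=(e^{-\sigma_1 t}T)*\bigl(t_+^{N-1}/(N-1)!\bigr)$, which for $N$ large enough is a continuous $\mathbb E$-valued function of polynomial growth satisfying $\partial_t^N F=e^{-\sigma_1 t}T$, and which is supported in $[0,\infty)$ because both convolution factors are. Finally, note that condition (1) implicitly presupposes that $e^{-\sigma t}T$ is tempered for every $\sigma>\sigma_0$ (this is what it means for the Laplace transform of $T$ to exist and be holomorphic on that half-plane); you use this when forming $e^{-\sigma_1 t}T$, so it should be stated.
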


Next, we introduce some function space notation.
Let  $\Omega \subset \mathbb R^2$ be a bounded Lipschitz domain with boundary $\partial \Omega.$
Denote the Sobolev space: $H^{1}(\Omega)=\left\{ u: {\rm D}^{\alpha} u  \in L^2 (\Omega) \text{~~for~all} ~~|\alpha| \leq 1\right\}$.
To describe the boundary operator and transparent boundary condition in the formulation of the boundary value problem, we define the trace functional space
\begin{align*}
H^{\nu}(\mathbb R)=\left\{u\in L^{2}(\mathbb R): \int_{\mathbb R}(1+\xi^{2})^{\nu}|\hat u|^{2}{\rm d}\xi<\infty \right\},
\end{align*}
whose norm is defined by
\begin{align*}
\|u\|_{H^{\nu}(\mathbb R)}=\left( \int_{\mathbb R}(1+\xi^{2})^{\nu}|\hat u|^{2}{\rm d}\xi\right)^{1/2},
\end{align*}
where $\hat u$ is the Fourier transform of $u$ defined as
\begin{align*}
\hat u (\xi) = \int_{\mathbb R} u(x) e^{{\rm i} x \xi} {\rm d} x.
\end{align*}
It is clear to note that the
dual space of $H^{1/2} (\mathbb R)$ is $H^{-1/2} (\mathbb R) $ under the $L^2 (\mathbb R)$ inner produce
\[
 \langle u, v \rangle =\int_{\mathbb R} u \bar v {\rm d} x= \int_{\mathbb R} \hat u \bar {\hat v} {\rm d} \xi.
\]

\subsubsection{Transparent  boundary condition}
We introduce a time-domain TBC to formulate the cavity scattering problem into an equivalent initial-boundary value problem in a bounded domain.
The idea is to design a Dirichlet--to--Neumann (DtN) operator
which maps the Dirichlet data to the Neumann data of the wave field.
More precisely, we will address  the reduced initial-boundary value problem
\begin{align}\label{rdp1}
\begin{cases}
  \varepsilon \partial_t^2 u - \nabla \cdot \left(  \mu^{-1} \nabla u\right)=0 \quad  &\text {in} ~~~\Omega,~~~t>0,\\
  u \big|_{t=0}=0, \quad \partial_t u \big|_{t=0} =0\quad & \text {in}~~~\Omega,\\
  u=0  \quad & \text {on}~~~S, ~~~~t>0,\\
  \partial_{\boldsymbol n} u = \mathscr T ~u+g \quad & \text {on}~~~\Gamma,~~~~t>0,
  \end{cases}
\end{align}
where $\mathscr T$ is a time-domain boundary operator  and  $g$  will be given later.
In what follows, we derive the formulation of the operator $\mathscr T $ and analyze its important properties.

Since $\varepsilon=\varepsilon_0, \mu=\mu_0$ in $\Omega^{\rm e},$  the  equations  \eqref{ew} and \eqref{hw} together with the radiation condition \eqref{SR}  implies the scattered field $u^{\rm sc} $ satisfies
\begin{align}\label{crp}
\begin{cases}
 \Delta u^ {\rm sc }-\varepsilon_0 \mu_0\partial_t^2 u^{\rm sc}  =0 \quad &\text {in}~~~\Omega^{\rm e},~~~t>0,\\
 u^{\rm sc}\big|_{t=0}=0, \quad \partial_t u^{\rm sc }\big|_{t=0}=0 \quad &\text {in}~~~~~\Omega^{\rm e},\\
 u^{\rm sc} =-\left( u^{\rm inc} + u^{\rm r} \right)  \quad  &\text {on}~~~ \Gamma^{\rm c},~~~t>0,\\
{(\varepsilon_0 \mu_0)^{-1/2}}\partial_t u^{\rm s} + \partial_r u^{\rm s} = o(r^{-1/2}) \quad &\text {as}~~r=|\boldsymbol \rho| \rightarrow \infty,~~~t>0.
 \end{cases}
\end{align}
Let $\breve u (\boldsymbol \rho, s) = \mathscr L (u) (\boldsymbol \rho, t)$ be the Laplace transform of $u (\boldsymbol \rho, t)$ with respect to $t$. Recalling that
\begin{align*}
 \mathscr L (\partial_t u) = s \breve u (\cdot, s )- u (\cdot, 0),\quad
 \mathscr  L (\partial_t^2 u)= s^2 \breve {u} (\cdot, s)- s u (\cdot, 0)-\partial_t u (\cdot, 0).
\end{align*}
Taking the Laplace transform of \eqref{crp} with the initial conditions, we can get  the time-harmonic Helmhlotz equation for the scattered field  with the complex wave number
\begin{align}\label{he}
\begin{cases}
 \Delta \breve u^{\rm sc} -  \frac{ s^2} {c^2} \breve u^{\rm sc}=0 \quad &\text {in}~\Omega^{\rm e},\\
 \breve u^{\rm sc} =- \left( \breve{u}^{\rm inc} + \breve {u}^{\rm r} \right)  \quad  &\text {on}~~~ \Gamma^{\rm c},\\
 \frac{ s} {c}\breve u^{\rm sc} + \partial_r \breve  u^{\rm sc} = o(r^{-1/2}) \quad &\text {as}~~r=|\boldsymbol \rho| \rightarrow \infty,
 \end{cases}
\end{align}
where $c :=\frac{1}{\sqrt{\varepsilon_0 \mu_0}}$ is the light
speed in the free space.

By taking the Fourier transform of the first equation in \eqref{he} with respect to $x$, we have an ordinary differential equation with respect to
$y$:
\begin{align}\label{pc}
 \frac{\partial^{2}\hat {\breve{u}}^{\rm sc}}{\partial y^{2}}-\left(\xi^{2}+\frac{s^{2}}{c^{2}} \right)\hat {\breve{u}}^{\rm sc}=0, \quad y>0.
\end{align}
It follows form the radiation condition in \eqref{he}, we deduce that the solution of \eqref{pc} has the analytical form
\begin{align}\label{hc}
\hat{\breve{u}}^{\rm sc}=\hat{\breve{u}}^{\rm sc}( \xi, 0)e^{\beta(\xi)y},
\end{align}
where
\begin{align}\label{bdd}
\beta(\xi)=\left(\xi^{2}+\frac{s^{2}}{c^{2}}\right)^{1/2} \quad  \text{ with}  ~~{\rm Re}\left(\beta(\xi)\right)<0.
\end{align}
Taking the inverse Fourier transform of \eqref{hc}, we find that
\begin{align*}
\breve u^{\rm sc}(x, y)=\int_{\mathbb R} \hat{\breve{u}}^{\rm sc}( \xi, 0)e^{\beta(\xi)y} e^{-{\rm i} x \xi} {\rm d} \xi \quad \text {in}~\Omega^{\rm e}.
\end{align*}
Taking the normal derivative on $\Gamma^{\rm c}\cup\Gamma$ and evaluating at $y=0$ yields
\begin{align}\label{abc}
\partial_{\boldsymbol n}\breve{u}^{\rm sc} (x, y)\big|_{y=0}=\int_{\mathbb R}  \beta (\xi)\hat{\breve{u}}^{\rm s}( \xi, 0) e^{-{\rm i} x \xi} {\rm d} \xi ,
\end{align}
where $\boldsymbol n$ is the unit outward normal on $\Gamma^{\rm c} \cup \Gamma$, i.e. $\boldsymbol n= (0, 1).$

For any $w \in H^{1/2} (\Gamma^{\rm c} \cup \Gamma)$ with $w= \int_{\mathbb R} \hat w (\xi, 0) e^{{-\rm i} x \xi } {\rm d} \xi$,  define the boundary operator
$\mathscr B $
\begin{align}\label{DF}
\mathscr B w:= \int_{\mathbb R} \beta (\xi) \hat w (\xi,  0) e^{{-\rm i} x \xi } {\rm d} \xi,
\end{align}
which leads to a transparent boundary condition for the scattered field on $\Gamma^{\rm c} \cup \Gamma$:
\begin{align*}
\partial_{\boldsymbol n}  \breve u^{\rm sc}= \mathscr B \breve u^{\rm sc}.
\end{align*}
From $ \breve u^{\rm sc}=\breve u-\left( \breve  u^{\rm inc} + \breve  u^{\rm r} \right)$, we can get an equivalent transparent boundary condition for the
total field
\begin{align}\label{hn}
\partial_{\boldsymbol n }\breve{u}=\mathscr B\breve{u}+\breve{g} \quad  \text {on}~~\Gamma^{\rm c} \cup \Gamma,
\end{align}
where $\breve{g}=\partial_{\boldsymbol n} \left( \breve{u}^{\rm inc} +\breve u^{\rm r} \right) -\mathscr B \left( \breve u^{\rm inc} + \breve{u}^{r} \right)$.

Taking the inverse Laplace transform of \eqref{hn} yields the TBC in the time-domain
\begin{align}\label{hm}
\partial_{\boldsymbol n}u=\mathscr T{u}+g \quad & \text {on}~~~\Gamma^{\rm c} \cup \Gamma,~~~~t>0,
\end{align}
where $\mathscr T=\mathscr L^{-1} \circ \mathscr B\circ \mathscr L$ and $g=\mathscr L^{-1}\circ\breve{g}$.

Since $u$ is defined on $\Gamma^{\rm c } \cup \Gamma$ and the  transparent boundary condition above  is derived for $u \in H^{1/2} (\mathbb R)$.
In order to  derive the transparent boundary condition for the total field on $\Gamma $,   we   make the zero extension as follows:
 for any given $u$ on $\Gamma$,
define
\begin{align*}
\tilde u (x)
=\begin{cases}
u \quad & \text{for}~~ x \in \Gamma,\\
0 \quad &\text{for}~~ x \in \Gamma^{\rm c}.
\end{cases}
\end{align*}
Since the  cavity is placed on a perfectly conducting ground
plane $\Gamma^{\rm c}$, i.e. the total filed  is required to be zero on $\Gamma^{\rm c}$, it is obviously that above  zero extension is consistent with the problem geometry.
 Based on the extension and the transparent boundary conditions \eqref{hn} and \eqref{hm}, we have the transparent boundary conditions for the total field on the opening
\begin{align*}
\partial_{\boldsymbol n} \breve u = \mathscr B  \breve  {\tilde u}+\breve g \quad \text {on}~~\Gamma
;
\quad \partial_{\boldsymbol n} u =\mathscr T \tilde u +g \quad \text{on}~~\Gamma, ~t>0.
\end{align*}
Define a dual paring $\langle \cdot, \cdot \rangle_{\Gamma}$ by
\begin{align*}
\langle u, v\rangle_{\Gamma}=\int_{\Gamma} u \bar v {\rm d} \gamma.
\end{align*}
By the definition of extension,  this dual paring for $u$ and $v$ is equivalent to the scalar product in $L^2 (\mathbb R)$
for their extension, i.e.,
\begin{align*}
\langle u, v \rangle_{\Gamma}=\langle \tilde u, \tilde v \rangle.
\end{align*}

The following lemmas are useful in the proof of the well-posedness of the reduced problem.

\begin{lemm}\label{ct1}
 The boundary operator $\mathscr B :H^{1/2} (\mathbb R)\rightarrow H^{-1/2} (\mathbb R)$ is continuous, i.e.,
 \begin{align*}
  \|\mathscr B u\|_{H^{-1/2}(\mathbb R)} \leq C \|u\|_{H^{1/2}(\mathbb R)}, \quad \forall u \in H^{1/2}(\mathbb R).
 \end{align*}

\end{lemm}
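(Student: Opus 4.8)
The plan is to recognize $\mathscr{B}$ as a Fourier multiplier operator with symbol $\beta(\xi)$, so that the whole estimate collapses to a pointwise bound on $\beta$. First I would note that, with the convention $\hat{u}(\xi)=\int_{\mathbb R} u(x)e^{\mathrm{i}x\xi}\,\mathrm dx$ adopted in the excerpt, the defining formula $\mathscr{B} w=\int_{\mathbb R}\beta(\xi)\hat w(\xi)e^{-\mathrm{i}x\xi}\,\mathrm d\xi$ exhibits $\mathscr{B} w$ as (a constant multiple of) the inverse Fourier transform of $\beta(\xi)\hat w(\xi)$; hence, up to the normalization fixed by Parseval's identity, $\widehat{\mathscr{B} w}(\xi)=\beta(\xi)\hat w(\xi)$. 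Inserting this into the definition of the $H^{-1/2}(\mathbb R)$ norm gives
\[
\|\mathscr{B} w\|_{H^{-1/2}(\mathbb R)}^2=\int_{\mathbb R}(1+\xi^2)^{-1/2}\,|\beta(\xi)|^2\,|\hat w(\xi)|^2\,\mathrm d\xi .
\]

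Next I would reduce continuity to the symbol estimate $|\beta(\xi)|^2\le C(1+\xi^2)$ holding uniformly in $\xi$. Since $\beta(\xi)^2=\xi^2+s^2/c^2$ regardless of which branch of the square root is selected in \eqref{bdd}, the modulus satisfies $|\beta(\xi)|^2=|\xi^2+s^2/c^2|$, and the triangle inequality yields
\[
|\xi^2+s^2/c^2|\le \xi^2+|s|^2/c^2\le \bigl(1+|s|^2/c^2\bigr)(1+\xi^2).
\]
This is the step where the constant first acquires its dependence on $s$ (and on the light speed $c$); it is worth keeping track that $C=C(s)$, which is harmless here because the lemma concerns the frequency-domain operator at a fixed $s$ with $\mathrm{Re}\,s>0$.

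Combining the two observations, $(1+\xi^2)^{-1/2}|\beta(\xi)|^2\le C(1+\xi^2)^{1/2}$ pointwise, so the integral above is bounded by $C\int_{\mathbb R}(1+\xi^2)^{1/2}|\hat w|^2\,\mathrm d\xi=C\|w\|_{H^{1/2}(\mathbb R)}^2$, which is exactly the asserted continuity bound.

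I do not expect a genuine obstacle in this argument: the only points requiring care are the bookkeeping of the Fourier-transform normalization when passing to the multiplier identity, and the observation that the modulus bound is insensitive to the branch condition $\mathrm{Re}(\beta)<0$. Once the multiplier identity is set up correctly, the remainder is the elementary weight comparison displayed above, and the proof is complete.
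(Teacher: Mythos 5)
Your proof is correct, and it follows the same overall strategy as the paper: both arguments reduce the continuity of $\mathscr B$ to a uniform pointwise bound on the symbol, namely $|\beta(\xi)|^2\le C(1+\xi^2)$. The difference lies in how that bound is obtained and in the packaging. The paper works through the duality pairing $\langle\mathscr B u,v\rangle$, applies Cauchy--Schwarz, and then bounds the weight $|\beta(\xi)|/(1+\xi^2)^{1/2}$ by an explicit calculus optimization: writing $s^2/c^2=a+\mathrm i b$ and $\phi=a+\xi^2$, it studies $F(t)=(t^2+b^2)/(1+t-a)^2$ on $t\ge a$, locates its critical point, and splits into the cases $1-a>0$ and $1-a\le 0$ to conclude $F(\phi)\le\max\{a^2+b^2,1\}$, i.e.\ $C=\max\{(a^2+b^2)^{1/4},1\}=\max\{|s|/c,1\}$. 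You instead compute $\|\mathscr B u\|_{H^{-1/2}}$ directly from the multiplier identity and dispatch the symbol bound in one line via $|\beta(\xi)|^2=|\xi^2+s^2/c^2|\le\xi^2+|s|^2/c^2\le(1+|s|^2/c^2)(1+\xi^2)$, which is correct and avoids the derivative computation and case analysis entirely; the resulting constant $(1+|s|^2/c^2)^{1/2}$ is equivalent to the paper's up to a factor of $\sqrt2$. What the paper's longer route buys is a marginally sharper constant; what yours buys is brevity and robustness (the bound is manifestly insensitive to the branch choice ${\rm Re}\,\beta<0$, as you note). Your explicit remark that $C=C(s)$ is a useful point that the paper leaves implicit, and it matters later when the lemma is invoked for varying $s$.
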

\begin{proof}
 For any $u,v \in H^{1/2}(\mathbb R)$, it follows from the definitions \eqref{DF} that
 \begin{align*}
\langle \mathscr B u, v\rangle=\int_{\mathbb R}\mathscr B u~\bar{v} {\rm } d\xi
=\int_{\mathbb R}\frac{\beta(\xi)}{(1+\xi^{2})^{1/2}}(1+\xi^{2})^{1/4}u\cdot(1+\xi^{2})^{1/4}\bar{v}{\rm d}\xi.
 \end{align*}
 To prove the lemma, it is required to estimate
\begin{align*}
\frac{|\beta(\xi)|}{|(1+\xi^{2})|^{1/2}}, \quad -\infty<\xi<\infty.
\end{align*}
Let \[
\frac{s^2}{c^2}=a+{\rm i} b,\quad a:= \frac{s_1^2 -s_2^2}{c^2}, b:=  \frac{2 s_1 s_2}{c^2}.
\]
 Denote
\[
\beta ^2 (\xi) = \frac{s^2}{c^2} +\xi^2=\phi + {\rm i} b,
\]
where $\phi:= {\rm Re} \left( \frac{s^2}{c^2} \right) + \xi^2 = a+ \xi^2. $    A simple calculation gives
\begin{align*}
\frac{|\beta(\xi)|}{|(1+\xi^{2})|^{1/2}}=\left[ \frac{\phi^2 +b^2}{(1+\phi -a)^2} \right]^{1/4}.
\end{align*}
Define
\begin{align*}
F(t)= \frac{t^2 +b^2}{(1+t -a)^2},  \quad  t\geq a.
\end{align*}
It follows
\begin{align*}
F'(t)= \frac{2 (1+t -a) (t (1-a) -b^2)}{(1+t-a)^4}.
\end{align*}
We consider it in two cases:\\
(i) $1-a >0$.   It can be verified that the function  $F(t)$  decreases for $a \leq t \leq  \frac{b^2}{1-a}$  and increase for $t >  \frac{b^2}{1-a}.$
Thus
\[
F(\phi) \leq  \max \left\{ F(a)=a^2+b^2,  ~~ F(+\infty)=1 \right\}.
\]
(ii) $1-a \leq 0.$  It is easy to verify that $F(t)$ decreases for $t \geq a.$  Thus, we have
\begin{align*}
F(\phi) \leq F(a)=a^2 +b^2.
\end{align*}
Combining above estimates and using the Cauchy--Schwarz inequality yield
\begin{align*}
|\langle \mathscr B u, v\rangle|\leq C\|u\|_{H^{1/2}(\mathbb R)}\|v\|_{H^{1/2}(\mathbb R)},
 \end{align*}
where
\begin{align*}
C=\max\{(a^2+b^2)^{1/4}, 1\}.
\end{align*}
Thus we have
\begin{align*}
 \|\mathscr B u\|_{H^{-1/2}(\mathbb R)} \leq\sup\limits_{v \in H^{1/2}(\mathbb R)}\frac{|\langle \mathscr B u, v\rangle|}{\|v\|_{H^{1/2}(\mathbb{R})}}\leq C \|u\|_{H^{1/2}(\mathbb R)}.
\end{align*}
\end{proof}
It follows from Lemma \ref{A2} and Lemma \ref{ct1} that the inverse Laplace transform in \eqref{hm} is  make sense.
\begin{lemm}\label{op1}
 It holds that
 \begin{align*}
  -{\rm Re} \langle (s\mu)^{-1} \mathscr B u, u \rangle \geq 0, \quad u \in H^{1/2} (\mathbb R).
 \end{align*}

\end{lemm}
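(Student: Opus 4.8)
The plan is to push everything to the Fourier side so that $\mathscr B$ is diagonalized by its symbol $\beta(\xi)$, thereby reducing the inequality to a pointwise sign condition on $\mathrm{Re}(\bar s\,\beta(\xi))$, and then to verify that sign condition using the branch constraint $\mathrm{Re}(\beta)<0$ recorded in \eqref{bdd}.

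First I would move to the frequency variable $\xi$. By the definition \eqref{DF} the operator acts as multiplication by its symbol, $\widehat{\mathscr B u}(\xi)=\beta(\xi)\hat u(\xi)$, and the duality pairing is $\langle u,v\rangle=\int_{\mathbb R}\hat u\,\overline{\hat v}\,\mathrm d\xi$. Since $\mathscr B$ is derived from the homogeneous exterior medium, the relevant permeability on $\Gamma$ is the positive real constant $\mu=\mu_0$, which can be pulled out of the integral. Hence
\[
\langle (s\mu)^{-1}\mathscr B u, u\rangle=(s\mu)^{-1}\int_{\mathbb R}\beta(\xi)\,|\hat u(\xi)|^2\,\mathrm d\xi .
\]
Writing $(s\mu)^{-1}=\bar s/(\mu|s|^2)$ and using $\mu|s|^2>0$ together with $|\hat u|^2\ge 0$, the claimed inequality becomes
\[
-\,\mathrm{Re}\,\langle (s\mu)^{-1}\mathscr B u, u\rangle
=-\frac{1}{\mu|s|^2}\int_{\mathbb R}\mathrm{Re}\!\bigl(\bar s\,\beta(\xi)\bigr)\,|\hat u(\xi)|^2\,\mathrm d\xi ,
\]
so it suffices to prove the pointwise bound $\mathrm{Re}\bigl(\bar s\,\beta(\xi)\bigr)\le 0$ for every $\xi\in\mathbb R$.

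Next I would establish that pointwise bound by splitting into real and imaginary parts. Write $s=s_1+\mathrm i s_2$ with $s_1>0$ and $\beta=\beta_1+\mathrm i\beta_2$, so that $\mathrm{Re}(\bar s\,\beta)=s_1\beta_1+s_2\beta_2$. The branch condition $\beta_1=\mathrm{Re}(\beta)<0$ from \eqref{bdd} gives at once $s_1\beta_1<0$. For the second term I would not compute $\beta$ explicitly but instead read off the sign of $\beta_2$ from the identity $\mathrm{Im}(\beta^2)=2\beta_1\beta_2$; comparing with $\beta^2=s^2/c^2+\xi^2$, whose imaginary part equals $b=2s_1s_2/c^2$, gives $2\beta_1\beta_2=2s_1s_2/c^2$. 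Because $\beta_1<0$ and $s_1>0$, this forces $\beta_2$ and $s_2$ to carry opposite signs (or to vanish simultaneously), whence $s_2\beta_2\le 0$. Adding the two contributions yields $\mathrm{Re}(\bar s\,\beta)=s_1\beta_1+s_2\beta_2<0$, and integrating against the nonnegative weight $|\hat u|^2$ completes the argument.

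I expect the only delicate point to be the sign bookkeeping in the second step: the whole conclusion hinges on the correctly chosen branch $\mathrm{Re}(\beta)<0$ and on deducing the sign of $\beta_2$ from $2\beta_1\beta_2=2s_1s_2/c^2$ rather than from a closed-form square root. Once that sign is pinned down, $s_1\beta_1$ and $s_2\beta_2$ are each nonpositive and the estimate is immediate; in particular no continuity bound such as Lemma \ref{ct1} is required here, only the explicit structure of the symbol $\beta$.
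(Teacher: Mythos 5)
Your proposal is correct and follows essentially the same route as the paper: pass to the Fourier side where $\mathscr B$ acts by the symbol $\beta(\xi)$, write $(s\mu)^{-1}=\bar s/(\mu|s|^2)$, and reduce to the pointwise sign of ${\rm Re}(\bar s\,\beta)=s_1\beta_1+s_2\beta_2$, which both arguments settle via the identity ${\rm Im}(\beta^2)=2\beta_1\beta_2=2s_1s_2/c^2$ together with the branch condition ${\rm Re}(\beta)<0$. The only cosmetic difference is that the paper combines the two terms into the single expression $\frac{s_1}{\varsigma}(\varsigma^2+c^{-2}s_2^2)\le 0$, whereas you bound $s_1\beta_1$ and $s_2\beta_2$ separately; these are equivalent.
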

\begin{proof}
By  the definition \eqref{DF}, we find
\begin{align*}
-\langle(s\mu)^{-1} \mathscr B u, u \rangle=-\int_{\mathbb R}(s\mu)^{-1} \beta(\xi)|u|^{2}{\rm d}\xi =-\int_{\mathbb R} \frac{\bar s  \beta (\xi)}
{\mu |s|^2}|u|^{2}{\rm d}\xi.
\end{align*}
Let $\beta(\xi)=\varsigma+{\rm i}\varrho $ with $\varsigma<0$. Taking the real part of the above equation gives
\begin{align}\label{hk}
 - {\rm Re} \langle (s\mu)^{-1}  \mathscr B u, u \rangle
 =-\int_{\mathbb R}\frac{s_{1}\varsigma+s_{2}\varrho}{\mu|s|^{2}}|u|^{2}{\rm d}\xi.
\end{align}
Recalling $\beta^{2}(\xi)=\xi^{2}+c^{-2}s^{2}$, we have
\begin{align}\label{ht}
 \varsigma^{2}-\varrho^{2}=\xi^{2}+c^{-2}(s^{2}_{1}-s^{2}_{2}), \quad
 \varsigma \varrho=c^{-2}s_{1}s_{2}.
\end{align}
Using \eqref{ht}, it gives
\begin{align}\label{hp}
s_{1}\varsigma +s_{2}\varrho=\frac{s_{1}}{\varsigma}\left( \varsigma^{2}+c^{-2}s^{2}_{2}\right).
\end{align}
Substituting \eqref{hp} into \eqref{hk}, we have
\begin{align*}
- {\rm Re} \langle (s\mu)^{-1}  \mathscr B u, u \rangle
 =-\int_{\mathbb R}\frac{1}{\mu|s|^{2}} \frac{s_{1}}{\varsigma }\left( \varsigma^{2}+c^{-2}s^{2}_{2}\right) |u|^{2}{\rm d}\xi\geq0, \end{align*}
which completes the proof.
\end{proof}

\begin{lemm}\label{op2}
 For any $u(\cdot, t) \in L^2 \left(0, T; H^{1/2} ( \Gamma) \right)$ with initial value $u (\cdot, 0)=0,$ it holds that
 \begin{align*}
  - {\rm Re}\int_0^T \langle \mathscr T u (\cdot, t), \partial_t u (\cdot, t) \rangle_{\Gamma}  {\rm d} t \geq 0.
 \end{align*}

\end{lemm}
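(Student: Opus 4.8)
The plan is to pass to the Laplace (frequency) domain, where the desired sign is exactly the content of Lemma \ref{op1}, and then to transfer the resulting inequality back to the finite interval $(0,T)$. The first issue to settle is that the Plancherel identity \eqref{PI} is stated on the half-line $(0,\infty)$ with the weight $e^{-2s_1 t}$, whereas the quantity to be estimated is an unweighted integral over $(0,T)$. I would bridge this gap using the causality of $\mathscr T$ together with a vanishing-weight limit $s_1\to0^+$.

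First I would record that $\mathscr T=\mathscr L^{-1}\circ\mathscr B\circ\mathscr L$ is causal. Indeed, by \eqref{bdd} the symbol $\beta(\xi)$ is holomorphic in $s$ for $s_1>0$, and Lemma \ref{ct1} gives the polynomial bound $\|\mathscr B\breve{u}\|_{H^{-1/2}(\mathbb R)}\lesssim(1+|s|)\|\breve{u}\|_{H^{1/2}(\mathbb R)}$; hence Lemma \ref{A2} shows that the convolution kernel of $\mathscr T$ is a distribution supported in $\{t\ge 0\}$, so that $(\mathscr T w)(\cdot,t)$ depends only on $w(\cdot,\tau)$ for $\tau\le t$. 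Consequently, if I extend $u$ from $(0,T)$ to $(0,\infty)$ by the constant value, namely $v(\cdot,t)=u(\cdot,t)$ for $t\le T$ and $v(\cdot,t)=u(\cdot,T)$ for $t>T$, then $v(\cdot,0)=0$, $\partial_t v=0$ on $(T,\infty)$, and causality yields $(\mathscr T v)(\cdot,t)=(\mathscr T u)(\cdot,t)$ for all $t\le T$.

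Next I would apply \eqref{PI} in its Hermitian form to $f=\mathscr T v$ and $g=\partial_t v$, both of which vanish for $t<0$. Using $\mathscr L(\mathscr T v)=\mathscr B\breve v$ and $\mathscr L(\partial_t v)=s\breve v-v(\cdot,0)=s\breve v$, together with Fubini in the spatial variable and the identification $\langle\cdot,\cdot\rangle_{\Gamma}=\langle\tilde{\cdot},\tilde{\cdot}\rangle$ of the dual pairing with the $L^2(\mathbb R)$ product of the zero extensions, this gives
\begin{align*}
{\rm Re}\int_0^\infty e^{-2s_1 t}\langle\mathscr T v,\partial_t v\rangle_{\Gamma}\,{\rm d}t=\frac{1}{2\pi}\int_{-\infty}^\infty{\rm Re}\,\langle\mathscr B\breve v,s\breve v\rangle_{\Gamma}\,{\rm d}s_2.
\end{align*}
The integrand on the right is nonpositive: since the exterior permeability equals the constant $\mu_0>0$ and $\langle\mathscr B\breve v,s\breve v\rangle_{\Gamma}=\bar s\,\langle\mathscr B\breve v,\breve v\rangle_{\Gamma}$, the inequality ${\rm Re}\,\langle\mathscr B\breve v,s\breve v\rangle_{\Gamma}\le 0$ is equivalent to $-{\rm Re}\,\langle(s\mu_0)^{-1}\mathscr B\breve v,\breve v\rangle_{\Gamma}\ge0$ (because $(s\mu_0)^{-1}=\bar s/(\mu_0|s|^2)$), which is precisely Lemma \ref{op1} applied to $\widetilde{\breve v}\in H^{1/2}(\mathbb R)$. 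Hence the left-hand side is $\le 0$ for every $s_1>0$.

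Finally, because $\partial_t v$ vanishes on $(T,\infty)$ and $\mathscr T v=\mathscr T u,\ \partial_t v=\partial_t u$ on $(0,T)$, the left-hand side reduces to ${\rm Re}\int_0^T e^{-2s_1 t}\langle\mathscr T u,\partial_t u\rangle_{\Gamma}\,{\rm d}t\le 0$ for all $s_1>0$. Since $t\mapsto\langle\mathscr T u,\partial_t u\rangle_{\Gamma}$ is integrable on the bounded interval $(0,T)$, I would let $s_1\to0^+$ and invoke dominated convergence to remove the weight, obtaining $-{\rm Re}\int_0^T\langle\mathscr T u,\partial_t u\rangle_{\Gamma}\,{\rm d}t\ge0$, as claimed. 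I expect the main obstacle to be not the frequency-domain positivity---which is essentially Lemma \ref{op1}---but the rigorous justification of the time/frequency bridge: establishing causality of $\mathscr T$ so that the constant extension does not alter the integral over $(0,T)$, and controlling the passage $s_1\to0^+$ (equivalently, that the abscissa of convergence relevant to \eqref{PI} is $0$ for the extended data). A density argument in $u$ may also be needed so that $\partial_t u$ and the pairing are well defined under the stated hypothesis $u\in L^2(0,T;H^{1/2}(\Gamma))$.
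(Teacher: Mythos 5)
Your proposal is correct and follows essentially the same route as the paper's proof: pass to the $s$-domain via the Parseval identity \eqref{PI}, obtain the sign from Lemma \ref{op1} applied to $\langle(s\mu)^{-1}\mathscr B\breve{\tilde u},\breve{\tilde u}\rangle_\Gamma$, and let $s_1\to0^+$. The only deviation is your temporal extension: the paper extends $u$ by zero outside $[0,T]$, whereas you extend by the constant value $u(\cdot,T)$ on $(T,\infty)$ and invoke causality of $\mathscr T$ to keep the $(0,T)$ integral unchanged; this is a minor variant of the same argument, and in fact slightly more careful, since the zero extension makes $\partial_t\tilde u$ pick up a Dirac mass at $t=T$ that the paper's computation silently discards.
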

\begin{proof}
 Let $\tilde u (\cdot, t)$ be the extension of $u (\cdot, t)$ with respect to $t$ in $\mathbb R$ such that
 $\tilde u (\cdot, t)= 0$ outside the interval $[0, T]$, and $\breve {\tilde u} =\mathscr L (\tilde u)$ be the Laplace of $\tilde u$. By the Parseval identity \eqref{PI} and Lemma \ref{op1}, we get
 \begin{align*}
  -{\rm Re} \int_0^{T} e^{- 2 s_1 t} \langle \mathscr T u, \partial_t u \rangle_{\Gamma} {\rm d} t
  =&-{\rm Re}\int_0^T e^{- 2 s_1 t} \int_{\Gamma} (\mathscr T u) \partial_t \bar u {\rm d} \gamma {\rm d }t \\
  =& - {\rm Re} \int_{\Gamma} \int_0^{\infty} e^{- 2 s_1 t} (\mathscr T \tilde u) \partial_t \bar {\tilde u} {\rm d} t{\rm d} \gamma\\
  =&-\frac{1}{2 \pi} \int_{- \infty}^{\infty}  {\rm Re} \langle \mathscr B \breve {\tilde u}, s \breve{\tilde u} \rangle_{\Gamma} {\rm d} s_2\\
  =& -\frac {1}{2 \pi} \int_{- \infty}^{\infty} |s|^2\mu {\rm Re} \langle (s\mu)^{-1} \mathscr B \breve {\tilde u}, \breve {\tilde u} \rangle_{\Gamma} {\rm d} s_2  \geq 0,
 \end{align*}
which  completes the proof
after taking $s_1 \rightarrow 0 $.
\end{proof}
The following trace theorem are useful in the following reduced problem, the proof   can be found in (cf.
\cite{03}).
\begin{lemm}\label{tt10}
{\rm (trace theorem)}
 Let $\Omega \subset \mathbb R^2$ be a bounded Lipschitz domain with boundary $\partial \Omega.$ For $1/2 < \nu< 3/2$ the interior trace
operator
\[
 T_0: H^{\nu} (\Omega) \rightarrow H^{\nu-1/2} (\Gamma)~~ \text {is  ~~ bounded}, \quad  \forall w \in H^{\nu} (\Omega),
\]
where $T_0  w =  w  \big|_{\Gamma}$.
\end{lemm}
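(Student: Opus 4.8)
The plan is to prove the estimate first on the model half-space $\mathbb R^2_+=\{(x,y):y>0\}$ with flat boundary $\{y=0\}$, and then to transfer it to a general bounded Lipschitz domain by localization and flattening. I would therefore start by reducing to the flat case: cover $\partial\Omega$ by finitely many open sets in each of which, after a rigid rotation, $\partial\Omega$ is the graph $y=\varphi(x)$ of a Lipschitz function with $\Omega$ lying locally below it, and fix a subordinate partition of unity $\{\chi_j\}$ with each $\chi_j$ smooth. Since multiplication by a fixed smooth cutoff is bounded on $H^\nu(\Omega)$ and the full norm is equivalent to the sum of the localized pieces, it suffices to bound $\|T_0(\chi_j w)\|_{H^{\nu-1/2}}$ by $\|\chi_j w\|_{H^\nu}$ for each $j$.

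The heart of the matter is the flat estimate, which I would obtain by the Fourier transform already set up above. Given $w\in H^\nu(\mathbb R^2_+)$, extend it to $w\in H^\nu(\mathbb R^2)$ by a bounded Sobolev extension operator and write $\hat w(\xi,\eta)$ for its two-dimensional Fourier transform. Fourier inversion in the normal variable expresses the trace through $\widehat{w(\cdot,0)}(\xi)=\tfrac1{2\pi}\int_{\mathbb R}\hat w(\xi,\eta)\,{\rm d}\eta$, and the Cauchy--Schwarz inequality, after splitting off the weight $(1+\xi^2+\eta^2)^{\nu/2}$, gives
\[
|\widehat{w(\cdot,0)}(\xi)|^2\le\Big(\frac1{2\pi}\int_{\mathbb R}(1+\xi^2+\eta^2)^{-\nu}\,{\rm d}\eta\Big)\Big(\frac1{2\pi}\int_{\mathbb R}(1+\xi^2+\eta^2)^{\nu}|\hat w(\xi,\eta)|^2\,{\rm d}\eta\Big).
\]
The substitution $\eta=(1+\xi^2)^{1/2}\zeta$ evaluates the first factor as $C(1+\xi^2)^{1/2-\nu}$ with $C=\tfrac1{2\pi}\int_{\mathbb R}(1+\zeta^2)^{-\nu}\,{\rm d}\zeta<\infty$, the finiteness being exactly the requirement $\nu>1/2$. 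Multiplying by $(1+\xi^2)^{\nu-1/2}$ and integrating in $\xi$ then yields $\|w(\cdot,0)\|_{H^{\nu-1/2}(\mathbb R)}^2\lesssim\|w\|_{H^\nu(\mathbb R^2)}^2$, which is the claimed bound in the flat case.

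Finally I would return to the curved charts. In the $j$-th chart the bi-Lipschitz map $(x,y)\mapsto(x,y-\varphi(x))$ flattens the boundary, sending $\Omega$ locally to the half-space and $\Gamma$ to the hyperplane, so that pulling back the flat estimate and summing over $j$ would complete the proof. The main obstacle is precisely this transfer step: because $\varphi$ is only Lipschitz, the flattening is not smooth, and composition with a bi-Lipschitz map preserves $H^\sigma$ only for $0\le\sigma\le1$. This is exactly what pins down the admissible range $1/2<\nu<3/2$: the lower bound $\nu>1/2$ comes from the convergence of the weight integral above, while the upper bound $\nu<3/2$ keeps the trace exponent $\sigma=\nu-1/2$ below $1$, so that $H^{\nu-1/2}(\Gamma)$ is intrinsically well defined on the Lipschitz boundary and invariant under the chart changes. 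Controlling the Sobolev norms under these non-smooth coordinate transformations---rather than the flat Fourier estimate, which is routine---is the delicate part and is the reason the conclusion is confined to this range.
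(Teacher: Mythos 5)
The paper does not actually prove this lemma; it simply cites a reference, so your attempt is measured against the standard argument you are reconstructing. Your localization, the flat-case Fourier computation (Cauchy--Schwarz in $\eta$, the substitution $\eta=(1+\xi^2)^{1/2}\zeta$, and the resulting weight $(1+\xi^2)^{1/2-\nu}$ whose integrability forces $\nu>1/2$) are all correct, and for $1/2<\nu\le 1$ the whole proof goes through; since the paper only ever invokes the lemma with $\nu=1$ (trace from $H^1(\Omega)$ into $H^{1/2}(\Gamma)$), your argument does cover every application made of it.

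However, for the stated range $1<\nu<3/2$ your transfer step has a genuine gap, and it is caused by exactly the fact you yourself cite: composition with a bi-Lipschitz map preserves $H^\sigma$ only for $0\le\sigma\le 1$. You apply this observation only to the \emph{target} space $H^{\nu-1/2}(\Gamma)$, where indeed $\nu-1/2<1$, but the same limitation applies to the \emph{source} space: the flattening $(x,y)\mapsto(x,y-\varphi(x))$ is merely bi-Lipschitz, so when $\nu\in(1,3/2)$ the pushforward of $\chi_j w\in H^{\nu}$ need not lie in $H^{\nu}$ of the half-space, and the flat estimate cannot simply be pulled back. This is not a removable technicality --- it is the reason the sharp Lipschitz trace theorem on $(1/2,3/2)$ (due to Costabel, see also McLean's book or Jerison--Kenig) requires a different device in the upper half of the range, e.g.\ a direct estimate of the order-$(\nu-1/2)$ Slobodeckij seminorm of $x\mapsto u(x,\varphi(x))$ against the $H^{\nu}(\Omega)$ norm, rather than a pullback of the half-space trace inequality. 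As written, your proof establishes the lemma only for $1/2<\nu\le 1$; to claim the full range you must either restrict the statement or replace the chart-transfer step for $\nu>1$ with such a direct boundary-seminorm estimate.
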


\subsection{The reduced one cavity scattering problem}

In this section, we will present the well-posedness of the reduced problem  by a variation method,  and given the  stability of
one  cavity scattering problem.

\subsubsection{well-posedness in the $s$-domain}
Taking the Laplace transform of \eqref{rdp1} and using the transparent boundary condition, we may consider the following reduced boundary value problems
\begin{align}\label{au1}
\begin{cases}
  s \varepsilon  \breve{u}-\nabla \cdot \left( (s\mu)^{-1} \nabla \breve{u} \right) = 0  \quad &\text {in}~~~~\Omega,\\
  \breve u=0  \quad &\text {on}~~~~S,\\
  \partial_r \breve{u} =\mathscr B \breve{u}+\breve{g}  \quad &\text {on}~~~~\Gamma,
 \end{cases}
\end{align}
where $\breve g= \partial_{\boldsymbol n} \left( \breve{u}^{\rm inc} +\breve u^{\rm r} \right) -\mathscr B \left( \breve u^{\rm inc} + \breve{u}^{r} \right)$,  $s= s_1+{\rm i} s_2$ with $s_1>0$.

By multiplying a test function $v \in H^1_{\rm S}:=\left\{ v \in H^1(\Omega): v=0 ~~ \text {on}~~S \right\}$ and integrating  by parts,
we arrive at the variational formulation of \eqref{au1}: find $\breve{u} \in H^1_{\rm S} (\Omega)$ such that
\begin{align}\label{vpe}
 a_{1} (\breve{u}, v) =\langle\breve{g},v\rangle_{\Gamma}, \quad \forall  v \in H^1_{\rm S} (\Omega),
\end{align}
where the sesquilinear form
\begin{align}\label{sf1}
 a_{1} (\breve{u}, v) =\int_{\Omega} \left(  (s \mu)^{-1} \nabla \breve{u} \cdot \nabla \bar {v} + s \varepsilon \breve{u} \bar  v \right){\rm d} \boldsymbol \rho
 - \langle (s\mu)^{-1} \mathscr B\breve{u} , v \rangle_{\Gamma}.
\end{align}
\begin{theo}\label{at1}
 The variational problem \eqref{vpe} has a unique solution $\breve{u} \in H^1_{\rm S} (\Omega)$ which satisfies
 \begin{align}\label{es1}
  \|\nabla \breve{u}\|_{L^2 (\Omega)^2} +\|s \breve{u}\|_{L^2 (\Omega)} \lesssim s_1^{-1} \| s\breve{g}\|_{H^{-1/2}(\Gamma)}.
 \end{align}
\end{theo}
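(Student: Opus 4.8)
The plan is to solve the variational problem \eqref{vpe} by the Lax--Milgram lemma and then extract the bound \eqref{es1} from the energy identity obtained by testing against the solution itself. Throughout, $s=s_1+{\rm i}s_2$ is fixed with $s_1>0$; in the Lax--Milgram step the continuity and coercivity constants are allowed to depend on $s$, and the explicit dependence on $s$ and $s_1$ is recovered only afterwards. First I would check that the sesquilinear form $a_1$ in \eqref{sf1} is bounded on $H^1_{\rm S}(\Omega)$: the two volume integrals are controlled by the Cauchy--Schwarz inequality together with $\varepsilon\le\varepsilon_{\rm max}$ and $\mu\ge\mu_{\rm min}$, while the boundary term is handled by the continuity of $\mathscr B$ from Lemma \ref{ct1} and the trace theorem Lemma \ref{tt10} with $\nu=1$, giving $\|\breve u\|_{H^{1/2}(\Gamma)}\lesssim\|\breve u\|_{H^1(\Omega)}$ (after the zero extension, so that $\langle\cdot,\cdot\rangle_\Gamma=\langle\cdot,\cdot\rangle$). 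Hence $|a_1(\breve u,v)|\lesssim\|\breve u\|_{H^1(\Omega)}\|v\|_{H^1(\Omega)}$.

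For coercivity I would rewrite $(s\mu)^{-1}=\bar s/(|s|^2\mu)$ and take the real part of $a_1(\breve u,\breve u)$. Since ${\rm Re}\,\bar s={\rm Re}\,s=s_1>0$, the two volume terms contribute the nonnegative quantity
\[
\frac{s_1}{|s|^2}\int_\Omega\mu^{-1}|\nabla\breve u|^2\,{\rm d}\boldsymbol\rho+s_1\int_\Omega\varepsilon|\breve u|^2\,{\rm d}\boldsymbol\rho,
\]
while Lemma \ref{op1}, applied to the zero extension of $\breve u$, gives $-{\rm Re}\langle(s\mu)^{-1}\mathscr B\breve u,\breve u\rangle_\Gamma\ge0$. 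Thus ${\rm Re}\,a_1(\breve u,\breve u)\ge c(s)\|\breve u\|_{H^1(\Omega)}^2$ with $c(s)=\min\{s_1\mu_{\rm max}^{-1}|s|^{-2},\,s_1\varepsilon_{\rm min}\}>0$, and the Lax--Milgram lemma yields a unique $\breve u\in H^1_{\rm S}(\Omega)$.

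It remains to establish \eqref{es1}. Taking $v=\breve u$ in \eqref{vpe} and reusing the computation above, the key observation is that the two volume terms may be read with a common weight, namely $s_1\int_\Omega\varepsilon|\breve u|^2=\frac{s_1}{|s|^2}\int_\Omega\varepsilon|s\breve u|^2$, so that
\[
\frac{s_1}{|s|^2}\Big(\mu_{\rm max}^{-1}\|\nabla\breve u\|_{L^2(\Omega)}^2+\varepsilon_{\rm min}\|s\breve u\|_{L^2(\Omega)}^2\Big)\le{\rm Re}\,a_1(\breve u,\breve u)={\rm Re}\,\langle\breve g,\breve u\rangle_\Gamma.
\]
I would then estimate the right-hand side by the duality pairing $\|\breve g\|_{H^{-1/2}(\Gamma)}\|\breve u\|_{H^{1/2}(\Gamma)}$, write $\|\breve g\|_{H^{-1/2}(\Gamma)}=|s|^{-1}\|s\breve g\|_{H^{-1/2}(\Gamma)}$, control the trace $\|\breve u\|_{H^{1/2}(\Gamma)}$ in terms of the weighted domain norm $\|\nabla\breve u\|_{L^2(\Omega)}+\|s\breve u\|_{L^2(\Omega)}$ through Lemma \ref{tt10}, and absorb the resulting factor, after multiplying by $|s|^2/s_1$, into the left-hand side to reach \eqref{es1}.

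I expect the last step to be the main obstacle. The coercive lower bound carries the awkward weight $s_1/|s|^2$, whereas the quantity in \eqref{es1} is the $s$-weighted norm $\|\nabla\breve u\|+\|s\breve u\|$, so the argument hinges on reconciling these weights and, above all, on a trace estimate for $\|\breve u\|_{H^{1/2}(\Gamma)}$ in which the dependence on $|s|$ is tracked \emph{explicitly} rather than absorbed into a generic constant. A multiplicative (interpolation) trace inequality combined with a Young inequality carrying an $s$-dependent weight is the natural device here, and making the powers of $|s|$ and $s_1$ come out precisely as in \eqref{es1} is where the careful bookkeeping will be concentrated.
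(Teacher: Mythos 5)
Your proposal follows essentially the same route as the paper: continuity of $a_1$ via Lemma \ref{ct1} and the trace theorem, coercivity by taking the real part of $a_1(\breve u,\breve u)$ with the common weight $s_1/|s|^2$ and Lemma \ref{op1}, Lax--Milgram for existence and uniqueness, and then the combination of the coercive lower bound with the duality estimate on $\langle\breve g,\breve u\rangle_\Gamma$ (writing $\|\breve g\|_{H^{-1/2}(\Gamma)}=|s|^{-1}\|s\breve g\|_{H^{-1/2}(\Gamma)}$ and absorbing) to obtain \eqref{es1}. The only difference is that you explicitly flag the $s$-dependence of the trace bound as the delicate point, whereas the paper simply asserts the estimate \eqref{rp2}; your concern is legitimate but does not change the argument.
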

\begin{proof}
 It suffices to show the coercivity of the sesquilinear form of $a_{1} (\breve{u}, v)$. The continuity of sesquilinear form  follows directly from the Cauchy--Schwarz inequality, Lemma \ref{ct1} and Lemma \ref{tt10}
 \begin{align*}
  |a_{1} (\breve{u}, v)| \leq
  &\frac{1}{|s| \mu_{\min}} \|\nabla \breve{u}\|_{L^2(\Omega)^2} \|\nabla v\|_{L^2 (\Omega)^2}
  +|s|\varepsilon_{\max} \|\breve{u}\|_{L^2 (\Omega)}\|v\|_{L^2 (\Omega)}\\
  &+\frac{1}{|s| \mu_{\min}}\|\mathscr B \breve{u}\|_{H^{-1/2} (\Gamma)} \|v\|_{H^{1/2}(\Gamma)}\\
  \lesssim & \|\breve{u}\|_{H^1(\Omega)} \|v\|_{H^1(\Omega)}.
 \end{align*}
Letting $v=\breve{u}$ in \eqref{sf1}, we get
\begin{align}\label{sqp}
 a_{1} (\breve{u}, \breve{u}) =\int_{\Omega} \left( (s \mu)^{-1} |\nabla \breve{u}|^2 + s \varepsilon |\breve{u}|^2 \right) {\rm d} \boldsymbol \rho-\langle (s\mu)^{-1 } \mathscr B \breve{u}, \breve{u} \rangle_{\Gamma}.
\end{align}
Taking the real part of \eqref{sqp} and using Lemma \ref{op1}, yields
\begin{align}\label{rp1}
 {\rm Re} \left(  a_{1}(\breve{u}, \breve{u}) \right) \geq C_{1} \frac{s_1}{|s|^2} \left( \|\nabla \breve{u}\|^2_{L^2 (\Omega)^2}  +\|s \breve{u}\|^2_{L^2 (\Omega)}\right),
\end{align}
where $C_{1}=\min\{\mu^{-1}_{\max},1\}$.

It follows from the Lax--Milgram lemma that the variational problem \eqref{vpe} has a unique solution $\breve{u} \in H^1_{\rm S} (\Omega).$
Moreover, we have from \eqref{vpe} that
\begin{align}\label{rp2}
 |a_{1}(\breve{u}, \breve{u})| \leq |s|^{-1} \|\breve{g}\|_{H^{-1/2}(\Gamma)} \|s \breve{u}\|_{L^2 (\Omega)}.
\end{align}
Combining \eqref{rp1}--\eqref{rp2} leads to
\begin{align*}
 \|\nabla \breve{u}\|^2_{L^2 (\Omega)^2}  +\|s \breve{u}\|^2_{L^2 (\Omega)} \lesssim s_1^{-1}  \| s \breve{g}\|_{H^{-1/2}(\Gamma)} \|s \breve{u}\|_{L^2 (\Omega)},
\end{align*}
which gives  estimate of \eqref{es1} after applying the Cauchy--Schwarz inequality.

\end{proof}

\subsubsection{well-posedness in the time-domain}
Using the time-domain transparent boundary condition, we consider the reduced initial-boundary value problem
\begin{align}\label{au2}
\begin{cases}
  \varepsilon \partial_t^2 u - \nabla \cdot \left(  \mu^{-1} \nabla u\right)=0 \quad  &\text {in} ~~~\Omega,~~~t>0,\\
   u\big|_{t=0}=0, \quad \partial_t u \big|_{t=0} =0\quad & \text {in}~~~\Omega,\\
  u=0  \quad & \text {on}~~~S, ~~~~t>0,\\
  \partial_r u = \mathscr T u+g \quad & \text {on}~~~\Gamma,~~~~t>0.
\end{cases}
\end{align}

\begin{theo}\label{ES2} The initial-boundary problem \eqref{au2} has a unique solution $u$, which satisfies
\begin{align*}
u\in L^{2}(0, T; H^{1}_{\rm S}(\Omega))\cap H^{1}(0, T; L^{2}(\Omega)),
\end{align*}
and the stability estimate
\begin{align} \label{UU1}
 \begin{split}
 \max\limits_{t\in[t,T]} &\left(\|\partial_t u\|_{L^2 (\Omega)}  +\|\partial_{t}(\nabla u)\|_{L^2 (\Omega)^2}\right) \\
 & \lesssim  \|g\|_{L^{1}(0, T; H^{-1/2}(\Gamma))}+ \max\limits_{t\in[t,T]}\|\partial_{t}g\|_{H^{-1/2}(\Gamma)}+\|\partial^{2}_{t}g\|_{L^{1}(0, T; H^{-1/2}(\Gamma))}.
 \end{split}
\end{align}
\end{theo}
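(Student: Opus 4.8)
The plan is to combine the frequency-domain well-posedness of Theorem \ref{at1} with a direct time-domain energy argument. For existence I would first observe that the variational solution $\breve u(\cdot,s)$ furnished by Theorem \ref{at1} depends holomorphically on $s$ in the half-plane $s_1>0$, since the sesquilinear form \eqref{sf1} and the datum $\breve g$ depend analytically on $s$ and the Lax--Milgram inverse preserves analyticity. Estimate \eqref{es1} then gives, for each fixed $s_1>\sigma_1>0$, a bound of the form $\|\breve u(\cdot,s)\|_{H^1(\Omega)}\lesssim(1+|s|)^m$ whenever $\breve g$ is polynomially bounded, so Lemma \ref{A2} shows that $\breve u$ is the Laplace transform of a causal distribution $u$ valued in $H^1_{\rm S}(\Omega)$; inverting the Laplace transform of \eqref{au1} recovers \eqref{au2}, and the membership $u\in L^2(0,T;H^1_{\rm S}(\Omega))\cap H^1(0,T;L^2(\Omega))$ follows from the Parseval identity \eqref{PI} applied to \eqref{es1}. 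Uniqueness I would defer to the energy identity below with $g=0$.

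The core of the argument is the stability estimate, obtained by the energy method. Since the right-hand side of \eqref{UU1} controls a time derivative of $u$, I would differentiate the problem in time and work with $v:=\partial_t u$. From the equation and $u|_{t=0}=0$ one checks $\partial_t^2u|_{t=0}=0$, so $v$ solves the same wave equation with homogeneous initial conditions $v|_{t=0}=\partial_t v|_{t=0}=0$ and $v=0$ on $S$; moreover, because $\mathscr B$ acts as multiplication by $\beta(\xi)$ it commutes with the scalar multiplier $s$, hence $\mathscr T$ commutes with $\partial_t$ and the boundary condition for $v$ is $\partial_{\boldsymbol n}v=\mathscr T v+\partial_t g$. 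Multiplying the $v$-equation by $\partial_t\bar v$, integrating over $\Omega$, applying Green's formula with $v=0$ on $S$, taking the real part and integrating over $(0,t)$ yields
\begin{align*}
\frac12\int_\Omega\left(\varepsilon|\partial_t v(t)|^2+\mu^{-1}|\nabla v(t)|^2\right){\rm d}\boldsymbol\rho
=\mu_0^{-1}{\rm Re}\int_0^t\langle\mathscr T v+\partial_t g,\partial_t v\rangle_\Gamma\,{\rm d}\tau,
\end{align*}
and by Lemma \ref{op2} the contribution of $\mathscr T v$ is nonpositive and may be discarded.

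The main obstacle is the remaining boundary term, since $\langle\partial_t g,\partial_t v\rangle_\Gamma$ involves the trace of $\partial_t v$, which the energy does not control. The device is to integrate by parts in time, using $v(\cdot,0)=0$, to write $\int_0^t\langle\partial_t g,\partial_t v\rangle_\Gamma\,{\rm d}\tau=\langle\partial_t g(t),v(t)\rangle_\Gamma-\int_0^t\langle\partial_t^2 g,v\rangle_\Gamma\,{\rm d}\tau$, thereby transferring the time derivatives onto $g$. The traces of $v$ are then bounded via the trace theorem (Lemma \ref{tt10}) and the Poincar\'e inequality for functions vanishing on $S$, giving $\|v\|_{H^{1/2}(\Gamma)}\lesssim\|\nabla v\|_{L^2(\Omega)}$, so both terms are dominated by the square root of the energy. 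Writing $\mathcal E(t)$ for the left-hand energy, this produces $\mathcal E(t)\lesssim\|\partial_t g(t)\|_{H^{-1/2}(\Gamma)}\mathcal E(t)^{1/2}+\int_0^t\|\partial_t^2 g\|_{H^{-1/2}(\Gamma)}\mathcal E(\tau)^{1/2}\,{\rm d}\tau$; evaluating at the maximizer of $\mathcal E$ on $[0,T]$ and dividing by $\mathcal E^{1/2}$ yields $\max_t\mathcal E(t)^{1/2}\lesssim\max_t\|\partial_t g\|_{H^{-1/2}(\Gamma)}+\|\partial_t^2 g\|_{L^1(0,T;H^{-1/2}(\Gamma))}$, which already controls $\|\partial_t\nabla u\|_{L^2(\Omega)}$.

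Finally, to recover $\|\partial_t u\|_{L^2(\Omega)}$ and produce the $\|g\|_{L^1}$ term in \eqref{UU1}, I would run the identical energy identity for $u$ itself: retaining the datum term as $\int_0^t\langle g,\partial_t u\rangle_\Gamma$ and bounding $\|\partial_t u\|_{H^{1/2}(\Gamma)}\lesssim\|\partial_t\nabla u\|_{L^2(\Omega)}\lesssim\mathcal E(\tau)^{1/2}$ gives $\|\partial_t u(t)\|_{L^2(\Omega)}^2\lesssim\|g\|_{L^1(0,T;H^{-1/2}(\Gamma))}\,\max_\tau\mathcal E(\tau)^{1/2}$, and Young's inequality converts this product into the sum of the three terms on the right of \eqref{UU1}. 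Adding the two bounds yields the stated estimate, while taking $g=0$ in the $u$-energy identity forces $\mathcal E\equiv0$ and hence uniqueness. The delicate points throughout are the commutation $\mathscr T\partial_t=\partial_t\mathscr T$, the sign supplied by Lemma \ref{op2}, and the integration by parts in time that keeps every boundary trace controlled by the interior energy.
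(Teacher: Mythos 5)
Your proposal is correct and follows essentially the same route as the paper: the Laplace-transform/Parseval argument for existence and the $L^2$--$H^1$ regularity, and then two nested energy identities (for $u$ and for $\partial_t u$) in which the TBC contribution is discarded by the sign property of Lemma \ref{op2} and the data term is integrated by parts in time so that only traces controlled by the energy remain. The only difference is in how the estimates are closed -- you absorb the $\partial_t u$-energy self-consistently via the trace theorem and a Poincar\'e inequality and then feed the result into the $u$-energy, whereas the paper bounds both right-hand sides by $\max_{t}\|\partial_t u\|_{H^1(\Omega)}$ times the data, adds them and divides -- but this is the same absorbing argument performed in a slightly different order.
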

\begin{proof}
First, we have
\begin{align*}
\int_0^T
\left( \|\nabla {u}\|_{L^2(\Omega)^2}^2+\|\partial_t
{u}\|^2_{L^2(\Omega)}\right) {\rm d} t
&\leq  \int_0^T e^{- 2 s_1(t- T)}\left ( \|\nabla {u}\|_{L^2(\Omega)^2}^2
+\|\partial_t {u}\|^2_{L^2(\Omega)}\right) {\rm d} t\\
&= e^{2 s_1 T} \int_0^T e^{-2 s_1 t} \left ( \|\nabla {u}\|_{L^2(\Omega)^2}^2
+\|\partial_t {u}\|^2_{L^2(\Omega)}\right) {\rm d} t\\
&\lesssim  \int_0^{\infty} e^{-2 s_1 t}  \left ( \| \nabla{u}\|_{L^2(\Omega)^2}^2
+\|\partial_t {u}\|^2_{L^2(\Omega)}\right) {\rm d} t.
\end{align*}
Hence it suffices to estimate the integral
\begin{align*}
\int_0^{\infty} e^{-2 s_1 t}  \left( \|\nabla {u}\|_{L^2(\Omega)^2}^2
+\|\partial_t {u}\|^2_{L^2(\Omega)}\right) {\rm d} t.
\end{align*}
Let $\breve u = \mathscr L u.$
By Theorem  \ref{at1},  we have
\begin{align*}
  \|\nabla \breve u\|^{2}_{L^2 (\Omega)^2} +\|s \breve u\|^{2}_{L^2 (\Omega)} \lesssim  s_1^{-2} |s|^{2}\|\breve{g}\|^{2}_{H^{-1/2}(\Gamma)}\lesssim  s_1^{-2} |s|^{2}\|\breve{u}^{\rm inc}+\breve u^{\rm r}\|^{2}_{H^{1}(\Omega)}.
 \end{align*}
It follows from (cf.\cite[Lemma 44.1]{04}) that $\breve u$ is a holomorphic
function of $s$ on the half plane $s_1 >\sigma_0>0,$  where $\sigma_0$ is
any positive constant. Hence we have from Lemma \ref{A2} that the inverse
Laplace transform of $\breve u$ exists and is supported in $(0, \infty).$

One may verify from the inverse Laplace transform that
\begin{align*}
\breve{u}=\mathscr L {(u)} =\mathscr F (e^{-s_1 t} u),
\end{align*}
where $\mathscr F$ is the Fourier transform in $s_2$.
Recalling  the Plancherel or Parseval identity for the Laplace transform  in \eqref{PI},
it follows
\begin{align*}
\int_0^{\infty} e^{-2 s_1 t}
 &\left( \|\nabla u\|_{L^2(\Omega)^2}^2
+\|\partial_t u\|^2_{L^2(\Omega)}\right) {\rm d} t
 =\frac{1}{2 \pi} \int_{-\infty}^{\infty} \left( \|\nabla  \breve u\|_{L^2(\Omega)^2}^2
+\|s  \breve u\|^2_{L^2(\Omega)}\right) {\rm d} s_2\\
& \lesssim s_1^{-2} \int_{-\infty}^{\infty} \left( \|s (\breve u^{\rm inc} + \breve u^{\rm r})\|^2_{L^2 (\Omega)}  +\| s (\nabla \breve u^{\rm inc} +\nabla \breve  u^{\rm r})\|^2_{L^{2} (\Omega)^{2}}\right) {\rm d} s_2 .
\end{align*}
Since $(u^{\rm inc}+u^{\rm r})|_{t=0}=\partial_{t}(u^{\rm inc}+u^{\rm r})|_{t=0}=0$ in $\Omega$,
 we have $\mathscr L(\partial_{t} ( u^{\rm inc} + u^{\rm r}))=s(\breve{u}^{\rm inc} +u^{\rm r})$ in $\Omega$. It is easy to note that
\begin{align*}
|s|^{2}(\breve{u}^{\rm inc}+ \breve u^{\rm r})=(2s_{1}-s)s(\breve{u}^{\rm inc} + \breve u^{\rm r})
=2s_{1}\mathscr L ( \partial_{t}u^{ \rm inc}+\partial_{t}u^{\rm r})-\mathscr L(\partial_{t}^{2}u^{\rm inc} +\partial_t^2 u^{\rm r} ),
\end{align*}
and
\begin{align*}
|s|^{2}(\nabla\breve{u}^{\rm inc}+\nabla \breve u^{\rm r})=2s_{1}\mathscr L(\partial_{t}\nabla u^{\rm inc} + \partial_t \nabla u^{\rm r})
-\mathscr L(\partial_{t}^{2}\nabla u^{\rm inc} +\partial_t^2 \nabla u^{\rm r}).
\end{align*}
Hence we have
\begin{align*}
\int_0^{\infty}
& e^{-2 s_1 t}
 \left( \|\nabla u\|_{L^2(\Omega)^2}^2
+\|\partial_t u\|^2_{L^2(\Omega)}\right) {\rm d} t\\
 &\lesssim  \int_{-\infty}^{\infty} \|\mathscr L (\partial_{t} u^{\rm inc} +\partial_t u^{\rm r})\|^2_{L^2 (\Omega)}{\rm d} s_2+s_{1}^{-2}\int_{-\infty}^{\infty}\|\mathscr L(\partial^{2}_{t}  u^{\rm inc} +\partial_t^2 u^{\rm r})\|^2_{L^{2} (\Omega)} {\rm d} s_2\\
  &+ \int_{-\infty}^{\infty} \|\mathscr L \left(\partial_{t}\nabla u^{\rm inc} +  \partial_t \nabla u^{\rm r}) \right)\|^2_{L^2 (\Omega)^{2}}{\rm d} s_2+s_{1}^{-2}\int_{-\infty}^{\infty}\|\mathscr L\left(\partial^{2}_{t}  \nabla u^{\rm inc} +\partial_t^2  \nabla u^{\rm r} \right)\|^2_{L^{2} (\Omega)^{2}} {\rm d} s_2.
\end{align*}
Using the Parseval identity \eqref{PI} again gives
\begin{align*}
\int_0^{\infty} &e^{-2 s_1 t}
 \left( \|\nabla u\|_{L^2(\Omega)^2}^2
+\|\partial_t u\|^2_{L^2(\Omega)}\right) {\rm d} t\\
&\lesssim  \int_{0}^{\infty} e^{-2 s_1 t}\| (\partial_{t} u^{inc} +\partial_t u^{\rm r})\|^2_{H^{1} (\Omega)} {\rm d} t+s_{1}^{-2}\int_{0}^{\infty}e^{-2 s_1 t}\|(\partial^{2}_{t}  u^{\rm inc} +\partial_t^2 u^{\rm r})\|^2_{H^{1} (\Omega)} {\rm d} t,
\end{align*}
which shows
\begin{align*}
  u\in L^2 \left( 0, T; H^1_{\rm S} (\Omega) \right) \cap H^1 \left( 0, T; L^2 (\Omega) \right).
\end{align*}

Next, we prove the stability. For any $0<t<T$, define the energy function
\begin{align*}
 e_1 (t) =\| \varepsilon^{1/2} \partial_t u (\cdot, t)\|^2_{L^2 (\Omega)} +\|  \mu^{-1/2} \nabla u (\cdot, t)\|^2_{L^2 (\Omega)^2}.
\end{align*}
It follows from \eqref{au2} and integration by parts that
\begin{align*}
 e_1(t) -e_1 (0)
 =\int_0^t e_1'(\tau ) {\rm d} \tau
 &= 2 {\rm Re}\int_0^t \int_{\Omega} \left(  \nabla \cdot \left(  \mu^{-1} \nabla u\right) \partial_t \bar u +\mu^{-1} (\nabla \partial_t u) \cdot \nabla \bar u \right) {\rm d} \boldsymbol \rho {\rm d} \tau.
\end{align*}
Since $e_{1}(0)=0$, we obtain from Lemma \ref{op2} that
\begin{align}
e_1(t)=\int_0^t e_1'(\tau ) {\rm d} \tau
 =&2 {\rm Re} \int_0^t \int_{\Omega} \left(  - \mu^{-1} \nabla u \cdot (\nabla \partial_t \bar u)
 +\mu^{-1} (\nabla \partial_t u) \cdot \nabla \bar u \right) {\rm d} \boldsymbol \rho {\rm d} \tau \nonumber\\
 &+2{\rm Re} \int_0^t \int_{\Gamma}\mu^{-1}(\partial_{r}u) \partial_{t}\bar{u}{\rm d}\gamma{\rm d}t \nonumber \\
 =& 2{\rm Re} \int_0^t\mu^{-1}\langle\mathscr T u, \partial_{t}u\rangle_{\Gamma}{\rm d}t+2{\rm Re} \int_0^t\langle g,\partial_{t}u\rangle_{\Gamma}{\rm d}t \nonumber\\
 \leq& 2{\rm Re} \int_0^t(\|g\|_{H^{-1/2}(\Gamma)}\|\partial_{t}u\|_{H^{1/2}(\Gamma)}){\rm d}t \nonumber\\
 \lesssim&2{\rm Re}\int_0^t(\|g\|_{H^{-1/2}(\Gamma)}\|\partial_{t}u\|_{H^{1}(\Omega)}){\rm d}t \nonumber\\
 \leq&2\left(\max\limits_{t\in[t,T]}\|\partial_{t}u\|_{H^{1}(\Omega)}\right)\|g\|_{L^{1}(0, T; H^{-1/2}(\Gamma))} . \label{ieq}
\end{align}
Since  the right-hand side of \eqref{ieq} contains the term $\partial_t \nabla u$, which can not be controlled by the left-hand side
of \eqref{ieq}, hence we need to consider a new reduced system.
Taking the derivative of \eqref{au2} with respect to $t$, we know that $\partial_{t}u$ also satisfies the same equations with $g$ replaced by $\partial_{t}g$.
Hence we may consider the similar energy function
\begin{align*}
e_2 (t)= \|\varepsilon^{1/2} \partial_t^{2} u (\cdot, t)\|^2_{L^2 (\Omega)} +\|\mu^{-1/2} \partial_{t}(\nabla u (\cdot, t))\|^2_{L^2 (\Omega)^2},
\end{align*}
and get the estimate
\begin{align*}
e_2 (t)
&\leq2{\rm Re} \int_0^t\int_\Gamma (\partial_{t}g) \partial^{2}_{t}\bar{u} {\rm d}\gamma{\rm d}t\\
&=2{\rm Re} \int_\Gamma(\partial_{t}g)\partial_{t}\bar{u}|^{t}_{0}{\rm d}\gamma-2{\rm Re} \int_0^t\int_\Gamma(\partial^{2}_{t}g) \partial_{t}\bar{u} {\rm d}\gamma{\rm d}t\\
&\leq2{\rm Re}(\max\limits_{t\in[t,T]}\|\partial_{t}u\|_{H^{1}(\Omega)})(\max\limits_{t\in[t,T]}\|\partial_{t}g\|_{H^{-1/2}(\Gamma)}+\|\partial^{2}_{t}g\|_{L^{1}(0, T; H^{-1/2}(\Gamma))}).
\end{align*}
Combing  above estimates, we can obtain
\begin{align*}
\max\limits_{t\in[t,T]}\|\partial_{t}u\|^{2}_{H^{1}(\Omega)}\lesssim(\|g\|_{L^{1}(0, T; H^{-1/2}(\Gamma))}+\max\limits_{t\in[t,T]}\|\partial_{t}g\|_{H^{-1/2}(\Gamma)}+\|\partial^{2}_{t}g\|_{L^{1}(0, T; H^{-1/2}(\Gamma))})\|\partial_{t}u\|_{H^{1}(\Omega)},
\end{align*}
which give the estimate \eqref{UU1} after applying Young's inequality inequality.
\end{proof}
\subsection{A priori estimates of one cavity problem}

In this section, we derive a priori estimates for the total field with a minimum regularity requirement for the data and an explicit dependence on the time.

The variation problem of \eqref{au2} in time-domain is to find $u\in H^{1}_{\rm S}(\Omega)$ for all $t>0$ such that
\begin{align}\label{ttt}
\int_{\Omega}\varepsilon(\partial^{2}_{t}u)\bar{v}{\rm d}\boldsymbol\rho=-\int_{\Omega}\mu^{-1}\nabla u\cdot\nabla\bar{v}{\rm d}\boldsymbol\rho+\int_\Gamma\mu^{-1}(\mathscr{T}u)\bar{v}{\rm d}\gamma+\int_\Gamma g\bar{v}{\rm d}\gamma, \quad \forall  v\in H^1_{S} (\Omega).
\end{align}
To show the stability of its solution, we follow the argument in \cite{04} but with careful study of the TBC.
\begin{theo}\label{ES4}
Let $u\in H^{1}_{\rm S}(\Omega)$ be the solution of \eqref{au2}. Given $g\in L^{1}(0, T; H^{-1/2}(\Gamma))$, we have for any $T>0$ that
\begin{align}\label{we}
  \|u\| _{L^{\infty} (0, T; L^2 (\Omega))}+\|\nabla u\| _{L^{\infty} (0, T; L^2 (\Omega)^2)}
  &\lesssim T \|g\|_{L^1 (0,T; H^{-1/2}(\Gamma))} +\|\partial_{t}g\|_{L^1 (0, T; H^{-1/2}(\Gamma))},
  \end{align}
  and
  \begin{align}\label{MM}
 \|u\|_{L^{2} (0, T; L^2 (\Omega))}+\|\nabla u\| _{L^{2} (0, T; L^2 (\Omega)^2)}
 &\lesssim T^{3/2}\|g\|_{L^1 (0,T; H^{-1/2}(\Gamma))} +T^{1/2} \|\partial_{t}g\|_{L^1 (0, T; H^{-1/2}(\Gamma))}.
\end{align}
\end{theo}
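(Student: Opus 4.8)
The plan is to reduce everything to the $L^\infty$-in-time bound \eqref{we}, since \eqref{MM} follows from it by Cauchy--Schwarz in time: $\|u\|_{L^2(0,T;L^2(\Omega))}\le T^{1/2}\|u\|_{L^\infty(0,T;L^2(\Omega))}$ and likewise for $\nabla u$, so inserting \eqref{we} immediately produces the $T^{3/2}$ and $T^{1/2}$ factors of \eqref{MM}. To prove \eqref{we} I would run two energy arguments on the weak form \eqref{ttt}: an ``integrated'' test-function argument that controls $\|u\|_{L^\infty(0,T;L^2(\Omega))}$ using only $\|g\|_{L^1(0,T;H^{-1/2})}$, followed by the standard energy identity that controls $\|\nabla u\|_{L^\infty(0,T;L^2(\Omega))}$ and which, crucially, feeds on the bound for $\|u\|$ just obtained.

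For the first argument, fix $t\in(0,T)$ and use the primitive $v(\cdot,\tau)=\int_\tau^t u(\cdot,\sigma)\,{\rm d}\sigma$ as test function in \eqref{ttt}, integrating the resulting identity over $\tau\in(0,t)$. Since $v(\cdot,t)=0$, $\partial_\tau v=-u$, $u(\cdot,0)=0$ and $\partial_\tau u(\cdot,0)=0$, integrating the inertial and gradient terms by parts in $\tau$ collapses them to $\tfrac12\|\varepsilon^{1/2}u(\cdot,t)\|_{L^2(\Omega)}^2$ and $\tfrac12\big\|\mu^{-1/2}\int_0^t\nabla u\,{\rm d}\sigma\big\|_{L^2(\Omega)^2}^2$, giving an identity of the form
\[
\tfrac12\|\varepsilon^{1/2}u(\cdot,t)\|_{L^2(\Omega)}^2+\tfrac12\Big\|\mu^{-1/2}\!\int_0^t\!\nabla u\,{\rm d}\sigma\Big\|_{L^2(\Omega)^2}^2=I_{\mathscr T}+I_g,
\]
where $I_{\mathscr T}={\rm Re}\int_0^t\langle\mu^{-1}\mathscr T u,v\rangle_\Gamma\,{\rm d}\tau$ and $I_g={\rm Re}\int_0^t\langle g,v\rangle_\Gamma\,{\rm d}\tau$.

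The main obstacle is the sign of $I_{\mathscr T}$: Lemma \ref{op2} yields coercivity only when $\mathscr T$ is paired with the time derivative of a field vanishing at $\tau=0$, whereas here $v$ is paired with $u$ and does not vanish at $\tau=0$. I would resolve this by writing $v(\cdot,\tau)=w(\cdot,t)-w(\cdot,\tau)$ with $w(\cdot,\tau)=\int_0^\tau u\,{\rm d}\sigma$, so that $w(\cdot,0)=0$ and $u=\partial_\tau w$. Using that $\mathscr T$ is a causal convolution in time, hence commutes with $\partial_\tau$ and satisfies $\mathscr T w(\cdot,0)=0$, one gets $\int_0^t\mathscr T u\,{\rm d}\tau=\mathscr T w(\cdot,t)$; after one integration by parts in $\tau$ the two terms carrying $\mathscr T w(\cdot,t)$ cancel and $I_{\mathscr T}$ reduces exactly to ${\rm Re}\int_0^t\langle\mu^{-1}\mathscr T w,\partial_\tau w\rangle_\Gamma\,{\rm d}\tau\le0$ by Lemma \ref{op2} applied to $w$ (legitimate since $w(\cdot,0)=0$). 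Dropping $I_{\mathscr T}$ and bounding $I_g$ by the trace theorem (Lemma \ref{tt10}), with $\|v(\cdot,\tau)\|_{H^1(\Omega)}$ controlled by $T\sup_{[0,t]}\|u\|_{L^2(\Omega)}+2\sup_{[0,t]}\|\int_0^\sigma\nabla u\,{\rm d}\sigma\|_{L^2(\Omega)^2}$, a Young's inequality yields $\|u\|_{L^\infty(0,T;L^2(\Omega))}\lesssim T\|g\|_{L^1(0,T;H^{-1/2}(\Gamma))}$.

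For the second argument I would test \eqref{ttt} with $\partial_t u$ and integrate in time, as in Theorem \ref{ES2}, to reach $\tfrac12\|\varepsilon^{1/2}\partial_t u\|^2+\tfrac12\|\mu^{-1/2}\nabla u\|^2\le{\rm Re}\int_0^t\langle g,\partial_\tau u\rangle_\Gamma\,{\rm d}\tau$, the $\mathscr T$-term again being discarded by Lemma \ref{op2}. Integrating the right-hand side by parts in time and using $u(\cdot,0)=0$ turns it into ${\rm Re}\langle g(\cdot,t),u(\cdot,t)\rangle_\Gamma-{\rm Re}\int_0^t\langle\partial_\tau g,u\rangle_\Gamma\,{\rm d}\tau$, so only $u$ (not $\partial_t u$) appears in the traces; bounding these by $\|u\|_{H^{1/2}(\Gamma)}\lesssim\|u\|_{L^2(\Omega)}+\|\nabla u\|_{L^2(\Omega)}$ via Lemma \ref{tt10}, estimating $\|g(\cdot,t)\|_{H^{-1/2}}\lesssim T^{-1}\|g\|_{L^1(0,T;H^{-1/2})}+\|\partial_t g\|_{L^1(0,T;H^{-1/2})}$ by averaging over the base point in time, and inserting the bound on $\|u\|_{L^2}$ from the first argument, I obtain a quadratic inequality $m^2\lesssim\big(T\|g\|_{L^1}+m\big)\big(T^{-1}\|g\|_{L^1}+\|\partial_t g\|_{L^1}\big)$ for $m:=\sup_{[0,T]}(\|\varepsilon^{1/2}\partial_t u\|^2+\|\mu^{-1/2}\nabla u\|^2)^{1/2}$. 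Solving it by Young/AM--GM gives $\|\nabla u\|_{L^\infty(0,T;L^2(\Omega))}\lesssim m\lesssim T\|g\|_{L^1(0,T;H^{-1/2})}+\|\partial_t g\|_{L^1(0,T;H^{-1/2})}$, which together with the first bound is \eqref{we}, and \eqref{MM} follows as noted. Besides the sign of $I_{\mathscr T}$, the one delicate point is to keep the first argument's estimate for $\|u\|_{L^2}$ independent of $m$: using instead the cruder $\|u(\cdot,t)\|_{L^2}\le\int_0^t\|\partial_\tau u\|_{L^2}\,{\rm d}\tau\lesssim Tm$ would couple the two steps and spoil the clean time dependence on the right-hand side of \eqref{we}.
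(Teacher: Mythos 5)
Your proposal is essentially correct, and its first half is the same argument as the paper's: you test with the primitive $v(\cdot,\tau)=\int_\tau^t u\,{\rm d}\sigma$ (the paper's $\psi_1$), collapse the inertial and gradient terms to $\tfrac12\|\varepsilon^{1/2}u(\cdot,t)\|^2+\tfrac12\|\mu^{-1/2}\int_0^t\nabla u\|^2$, and discard the DtN term by its nonpositivity. Your way of seeing that sign --- substituting $v=w(\cdot,t)-w(\cdot,\tau)$ with $w=\int_0^\tau u$, using that $\mathscr T$ is a causal convolution commuting with $\partial_\tau$, and invoking Lemma \ref{op2} for $w$ --- is a clean time-domain repackaging of what the paper does in the Laplace domain via $\int_0^t u\,{\rm d}\tau=\mathscr L^{-1}(s^{-1}\breve u)$ and Lemma \ref{op1}; the two are equivalent. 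Where you genuinely diverge is the gradient bound: the paper applies the \emph{same} primitive trick to the time-differentiated equation (test function $\psi_2=\int_t^\theta\partial_\tau u\,{\rm d}\tau$), which produces a right-hand side involving only $\|\partial_t g\|_{L^1}$ paired with $\|u\|_{H^1}$ and closes with a single Young inequality applied to the sum of the two estimates; you instead use the standard energy identity with test function $\partial_t u$ and move the time derivative onto $g$, which forces a pointwise-in-time evaluation $\langle g(\cdot,t),u(\cdot,t)\rangle_\Gamma$. Your derivation of \eqref{MM} from \eqref{we} by Cauchy--Schwarz in time is simpler than the paper's (which re-integrates the intermediate inequality \eqref{emm} in $\theta$) and gives the same powers of $T$.

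One point to tighten: your averaging bound $\|g(\cdot,t)\|_{H^{-1/2}}\lesssim T^{-1}\|g\|_{L^1}+\|\partial_t g\|_{L^1}$ injects a factor $T^{-1}\|g\|_{L^1}$ into the quadratic inequality for $m$, and after Young's inequality this leaves a term $T^{-1}\|g\|_{L^1}$ on the right of \eqref{we}, which is \emph{not} dominated by $T\|g\|_{L^1}+\|\partial_t g\|_{L^1}$ when $T<1$. The same small-$T$ issue appears in your first step, where the coefficient is really $T+2$ rather than $T$. Both are repaired by observing that $g(\cdot,0)=0$ (the incident data vanish for $t\le 0$), so $\|g(\cdot,t)\|_{H^{-1/2}}\le\|\partial_t g\|_{L^1(0,T;H^{-1/2})}$ and $\|g\|_{L^1}\le T\|\partial_t g\|_{L^1}$; with that remark your estimates close to exactly the stated form for all $T>0$, so this is a bookkeeping fix rather than a gap. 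The paper's choice of $\psi_2$ avoids the issue altogether because no pointwise value of $g$ ever appears.
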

\begin{proof}
Let $ 0< \theta < T$ and define  an  auxiliary function
 \begin{align*}
  \psi_{1} (\boldsymbol \rho, t) =\int_t^{\theta} u (\boldsymbol \rho, \tau) {\rm d} \tau, \quad \boldsymbol \rho  \in \Omega, ~~~ 0 \leq t \leq \theta.
 \end{align*}
It is clear that
\begin{align}\label{F1}
 \psi _{1} (\boldsymbol \rho, \theta)=0, \quad \partial_t \psi _{1} (\boldsymbol \rho, t) =- u(\boldsymbol \rho, t).
\end{align}
For any $\phi (\boldsymbol \rho, t) \in L^2 \left( 0, \theta;~ L^2 (\Omega) \right)$, we have
\begin{align}\label{F2}
 \int_0^{\theta} \phi (\boldsymbol \rho, t) \bar {\psi}_{1} (\boldsymbol \rho, t) {\rm d} t=
 \int_0^{\theta} \left( \int_0^t \phi (\boldsymbol \rho, \tau) {\rm d} \tau \right) \bar u (\boldsymbol \rho, t) {\rm d} t.
\end{align}
Indeed, using integration by parts and \eqref{F1}, we have
\begin{align*}
  \int_0^{\theta}
\phi (\boldsymbol \rho, t) \bar \psi_{1} (\boldsymbol \rho, t) {\rm d} t
 =&\int_0^{\theta}\int_t^{\theta} \bar u (\boldsymbol \rho, \tau) {\rm d} \tau {\rm d} \left( \int_0^{t} \phi (\boldsymbol \rho, \varsigma) {\rm d}\varsigma\right)\\
 =&\int_{t}^{\theta} \bar u (\boldsymbol \rho, \tau) {\rm d} \tau \int_0^{t} \phi (\boldsymbol \rho, \varsigma) {\rm d} \varsigma \big|_0^{\theta}
 +\int_0^{\theta} \left( \int_0^{t} \phi (\boldsymbol \rho,\varsigma) {\rm d} \varsigma \right) \bar u (\boldsymbol \rho, t) {\rm d} t\\
= &\int_0^{\theta} \left( \int_0^t \phi (\boldsymbol \rho, \tau) {\rm d} \tau \right) \bar u (\boldsymbol \rho, t) {\rm d} t.
\end{align*}
Next, we take the test function $v =\psi_{1}$ in \eqref{ttt} and get
\begin{align}\label{tf}
 \int_{\Omega} \varepsilon (\partial_t^2 u) ~ \bar \psi _{1}{\rm d} \boldsymbol  \rho
 = -\int_{\Omega} \mu^{-1}\nabla u \cdot \nabla \bar {\psi} {\rm d} \boldsymbol \rho  +\int_{\Gamma} \mu^{-1}(\mathscr T u) \bar \psi_{1} {\rm d} \gamma
 +\int_{\Gamma}g \bar {\psi}_{1} {\rm d} \gamma.
\end{align}
It follows from the facts in \eqref{F1}  and the initial conditions  in \eqref{au2} that
\begin{align*}
 {\rm Re}\int_0^{\theta} \int_{\Omega}
 \varepsilon (\partial_t^2  u ) \bar \psi_{1}{\rm d} \boldsymbol  \rho  {\rm d} t
 &={\rm Re} \int_{\Omega} \varepsilon \left(  (\partial_t u ) \bar {\psi}_{1} \bigg|_{0}^{\theta} +\frac{1}{2} |u|^2 \bigg|_{0} ^{\theta}\right) {\rm d} \boldsymbol  \rho\\
 &= \frac{1}{ 2} \| \varepsilon^{1/2}  u (\cdot, \theta)\|^2_{L^2 (\Omega)}.
\end{align*}
Integrating \eqref{tf} from $t=0$ to $t=\theta$ and taking the real parts yields
\begin{align}
  &\frac{1}{2}
\|\varepsilon^{1/2} u(\cdot, \theta)\|^2_{L^2 (\Omega)} +{\rm Re} \int_0^{\theta} \int_{\Omega} \mu^{-1} \nabla u \cdot \nabla \bar {\psi}_{1} {\rm d} \boldsymbol  \rho  {\rm d} t \nonumber \\
 =& \frac{1}{2} \|\varepsilon^{1/2} u (\cdot, \theta)\|^2_{L^2 (\Omega)} +\frac{1}{2} \int_{\Omega} \mu^{-1} \left| \int_0^{\theta} \nabla u (\cdot, t) {\rm d} t \right|^2 {\rm d} \boldsymbol \rho \nonumber\\
 = &{\rm Re}\int_0^{\theta} \mu^{-1} \langle \mathscr T u,~ \psi_{1} \rangle_{\Gamma} {\rm d }t
 +{\rm Re}\int_0^{\theta} \int_{\Gamma}g \bar \psi_{1}  {\rm d} \gamma{\rm d} t.\label {en1}
\end{align}
In what follows, we estimate the two terms on the right-hand side of \eqref{en1} separately.

By the property \eqref{F2}, we can obtain
\begin{align*}
 {\rm Re} \int_0^{\theta}  \mu^{-1}\langle \mathscr T u, \psi_{1}\rangle_{\Gamma} {\rm d }t
 ={\rm Re} \int_0^{\theta} \int_0^t\left(  \int_{\Gamma}\mu^{-1}\mathscr T u(\cdot, \tau){\rm d}\gamma \right){\rm d} \tau \bar u (\cdot, t)  {\rm d} t.
\end{align*}
Let $\tilde{u}$ be the extension of $u$ with respect to $t$
 in $\mathbb R$ such that $\tilde{u}=0$
outside the interval $[0,\theta]$. We obtain from the Parseval identity and Lemma \ref{op1} that
\begin{align*}
{\rm Re}\int_{0}^{\theta}e^{-2s_{1}t}
&\int_{0}^{t}\left(  \int_{\Gamma}\mu^{-1}\mathscr T u(\cdot, \tau){\rm d}\gamma \right){\rm d} \tau \bar u (\cdot, t)  {\rm d} t
={\rm Re}\int_{\Gamma}\int_{0}^{\infty}e^{-2s_{1}t}(\int_{0}^{t}\mu^{-1}\mathscr T \tilde{u}(\cdot, \tau){\rm d} \tau )\bar{\tilde{u}}(\cdot, t){\rm d} t{\rm d}\gamma\\
=&{\rm Re}\int_{\Gamma}\int_{0}^{\infty}e^{-2s_{1}t}(\int_{0}^{t}\mathscr{L}^{-1}\circ\mu^{-1}\mathscr{B}\circ\mathscr{L}\tilde{u}(\cdot, \tau){\rm d} \tau)\bar{\tilde{u}}(\cdot, t){\rm d}\gamma{\rm d} t\\
=&{\rm Re}\int_{\Gamma}\int_{0}^{\infty}e^{-2s_{1}t}(\mathscr{L}^{-1}\circ (s\mu)^{-1}\mathscr{B}\circ\mathscr{L}\tilde{u}(\cdot, t))\bar{\tilde{u}}(\cdot, t){\rm d}\gamma{\rm d} t\\
=&\frac{1}{2\pi}\int_{-\infty}^{\infty}{\rm Re}\langle(s\mu)^{-1}\mathscr B \breve{\tilde{u}}, \breve{\tilde{u}}\rangle_{\Gamma}{\rm d}s_{2}\leq0,
\end{align*}
where we have used the fact that
\begin{align*}
\int_{0}^{t}u(\tau){\rm d}\tau=\mathscr{L}^{-1}(s^{-1}\breve{u}(s)).
\end{align*}
After taking $s_{1}\rightarrow 0$, we obtain
\begin{align} \label{p1}
{\rm Re} \int_0^{\theta}  \mu^{-1}\langle \mathscr T u, \psi_{1}\rangle_{\Gamma} {\rm d }t\leq0.
\end{align}
For $0 \leq t \leq \theta \leq T,$  by \eqref{F2} we have
\begin{align}
 {\rm Re}\int_0^{\theta}\int_{\Gamma} g \bar {\psi}_{1} {\rm d} \gamma{\rm d}t
 &={\rm Re}\int_0^{\theta}\left(\int_0^t \int_{\Gamma}g (\cdot, \tau)   {\rm d }\gamma {\rm d} \tau\right)\bar u (\cdot, t) {\rm d} t \nonumber\\
 &\leq \int_0^{\theta} \left( \int_0^t \|g (\cdot, \tau)\|_{H^{-1/2}(\Gamma)} {\rm d} \tau\right) \|u (\cdot, t)\|_{H^{1/2}(\Gamma)} {\rm d} t\nonumber\\
 &\leq \int_0^{\theta} \left( \int_0^t \|g (\cdot, \tau)\|_{H^{-1/2}(\Gamma)} {\rm d} \tau\right) \|u(\cdot, t)\|_{H^{1}(\Omega)} {\rm d} t\nonumber\\
 &\leq  \left( \int_0^\theta \|g (\cdot, t)\|_{H^{-1/2}(\Gamma)} {\rm d} t\right) \left( \int_0^\theta \|u(\cdot, t)\|_{H^{1}(\Omega)} {\rm d} t\right). \label{p2}
\end{align}
Combining \eqref{en1}--\eqref{p2}, we have for any $\theta \in [0, T]$ that
 \begin{align}
 \frac{1}{2} \|\varepsilon^{1/2} u (\cdot, \theta)\|^2_{L^2 (\Omega)}
 &\leq  \frac{1}{2} \|\varepsilon^{1/2} u (\cdot, \theta)\|^2_{L^2 (\Omega)}
 +\frac{1}{2} \int_{\Omega} \mu^{-1} \left| \int_0^{\theta} \nabla u (\cdot, t) {\rm d} t \right|^2 {\rm d} \boldsymbol \rho  \nonumber\\
 &\leq \left( \int_0^\theta \|g(\cdot, t)\|_{L^2 (\Omega)} {\rm d} t\right)
 \left( \int_0^\theta \|u(\cdot, t)\|_{L^2 (\Omega)} {\rm d} t\right). \label{p4}
 \end{align}
Taking the derivative of \eqref{au2} with respect to $t$, we know that $\partial_{t}u$ satisfies the same equation with $g$ replaced by $\partial_t g$. Define
\begin{align*}
  \psi_{2} (\boldsymbol \rho, t) =\int_t^{\theta} \partial_t u(\boldsymbol \rho, \tau) {\rm d} \tau, \quad \boldsymbol \rho  \in \Omega, ~~~ 0 \leq t \leq \theta.
 \end{align*}
 We may follow the same steps as those for $\psi_{1}$ to obtain
\begin{align}
 &\frac{1}{2}
 \|\varepsilon^{1/2}\partial_{t} u(\cdot, \theta)\|^2_{L^2 (\Omega)} +\frac{1}{2} \int_{\Omega} \mu^{-1} \left| \int_0^{\theta} \partial_{t}(\nabla u (\cdot, t)) {\rm d} t \right|^2 {\rm d} \boldsymbol \rho  \nonumber\\
=  &{\rm Re}\int_0^{\theta} \mu^{-1} \langle \mathscr T\partial_{t}u,~ \psi_{2} \rangle_{\Gamma} {\rm d }t
 +{\rm Re}\int_0^{\theta} \int_{\Gamma} (\partial_t g)\bar \psi_{2}  {\rm d} \gamma{\rm d} t.
 \label {en2}
\end{align}
Integrating by parts yields that
 \begin{align}
\frac{1}{2} \int_{\Omega} \mu^{-1} \left| \int_0^{\theta} \partial_{t}(\nabla u (\cdot, t)) {\rm d} t \right|^2 {\rm d} \boldsymbol \rho=\frac{1}{2} \|\mu^{-1/2} \nabla u (\cdot, \theta) \|^2 _{L^{2}(\Omega)}.\label{p7}
\end{align}
The first term on the right-hand side of \eqref{en2} can be discussed as above, we only consider the second term. By \eqref{F1} and  Lemma \ref{tt10}, we get
\begin{align}
\int_0^{\theta}\int_{\Gamma} (\partial_{t}g) \bar {\psi}_{2} {\rm d} \gamma{\rm d}t
 &= \int_{\Gamma}( \int_0^t \partial_{\tau}g (\cdot, \tau) {\rm d} \tau)\bar{u} (\cdot, t)|^{\theta}_{0} {\rm d} \gamma-\int_0^{\theta}\int_{\Gamma}(\partial_{t}g (\cdot, t))u(\cdot, t)  {\rm d} \gamma{\rm d} t \nonumber\\
 &\lesssim \int_0^{\theta}\|\partial_{t}g (\cdot, t)\|_{H^{-1/2}(\Gamma)}\|u(\cdot, t)\|_{H^{1/2}(\Gamma)} {\rm d} t\nonumber\\
 &\lesssim  \int_0^{\theta}\|\partial_{t}g (\cdot, t)\|_{H^{-1/2}(\Gamma)}\|u(\cdot, t)\|_{H^{1}(\Omega)} {\rm d} t.\label{p6}
\end{align}
Substituting \eqref{p7}--\eqref{p6} into \eqref{en2}, we have for any $\theta \in [0, T]$ that
 \begin{align}
 \frac{1}{2} \|\varepsilon^{1/2} \partial_{t}u (\cdot, \theta)\|^2_{L^2 (\Omega)}+\frac{1}{2} \|\mu^{-1/2} \nabla u (\cdot, \theta)) \|^2 _{L^{2}(\Omega)^2}
 &\lesssim  \int_0^{\theta}\|\partial_{t}g (\cdot, t)\|_{H^{-1/2}(\Gamma)}\|u(\cdot, t)\|_{H^{1}(\Omega)} {\rm d} t. \label{p5}
 \end{align}
Combing the estimates \eqref{p4} and \eqref{p5}, it follows
\begin{align}
\|u(\cdot, \theta)\|^{2}_{L^{2}(\Omega)}+\|\nabla u\|^{2}_{L^{2}(\Omega)^{2}}
 &\lesssim (\int^{\theta}_{0}\|g(\cdot, t)\|_{H^{-1/2}(\Gamma)}{\rm d}t)(\int^{\theta}_{0}\|u(\cdot, t)\|_{H^{1}(\Omega)}{\rm d}t)\nonumber\\
 &+\int^{\theta}_{0}\|\partial_{t}g(\cdot, t)\|_{H^{-1/2}(\Gamma)}\|u(\cdot, t)\|_{H^{1}(\Omega)}{\rm d}t).\label{emm}
\end{align}
Taking the $L^{\infty}$-norm with respect to $\theta$ on both sides of \eqref{emm} yields
\begin{align*}
 \|u\|^2_{L^{\infty} (0, T; L^2 (\Omega))}+ \|\nabla u\|^{2}_{L^{\infty}(0, T; L^2 (\Omega)^{2})}
 &\lesssim T \|g\|_{L^1 (0, T; H^{-1/2 } (\Gamma))}\|u\|_{L^{\infty} (0, T; H^1 (\Omega))} \nonumber\\
 &+\|\partial_{t}g\|_{L^1 (0, T; H^{-1/2 } (\Gamma))}\|u\|_{L^{\infty} (0, T; H^1 (\Omega))},
\end{align*}
which gives the estimate \eqref{we} after applying the Young's inequality.

Integrating \eqref{emm} with respect to $\theta$ from $0$ to $T$ and using the Cauchy--Schwarz inequality, we obtain
\begin{align*}
 \|u\|^2_{L^{2} (0, T; L^2 (\Omega))}+ \|\nabla u\|^{2}_{L^{2}(0, T; L^2 (\Omega)^{2})}
 &\lesssim T^{3/2} \|g\|_{L^1 (0, T; H^{-1/2 } (\Gamma))}\|u\|_{L^{2} (0, T; H^1 (\Omega))} \nonumber\\
 &+T^{1/2}\|\partial_{t}g\|_{L^1 (0, T; H^{-1/2 } (\Gamma))}\|u\|_{L^{2} (0, T; H^1 (\Omega))},
\end{align*}
which implies the estimate \eqref{MM} by using  Young's inequality again.
\end{proof}
\section{two cavities scattering problem}\label{TSP}
In order to address the general multiple cavity scattering problem,
in this section, we first give the discussion on the two cavity scattering problem.
 As it shows that the two cavity scattering problem shares the same features with the general multiple cavity scattering problem,
 but is  easier to present the major ideas in the proof of the
well-posedness and stability for the multiple cavity scattering  problem.

\subsection {Problem formulation}
\begin{figure}
\centering
\includegraphics[width=0.45\textwidth]{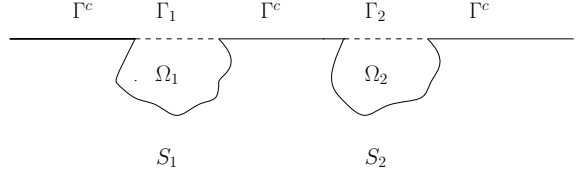}
\caption{The problem geometry of the two cavity.}
\label{fig2}
\end{figure}

As shown in the Figure \ref{fig2}, two open cavities $\Omega_{1}$ and $\Omega_{2}$,
enclosed by the apertures $\Gamma_{1}$ and $\Gamma_{2}$ and the walls $S_{1}$ and $S_{2}$, are placed on a perfectly conducting ground plane $\Gamma^{\rm c}$. Above the flat surface $\Gamma^{\rm c}\cup\Gamma_{1}\cup\Gamma_{2}$, the medium is assumed to be homogeneous with positive dielectric permittivity $\varepsilon_{0}$ and magnetic permeability $\mu_{0}$. The medium inside the cavity $\Omega_{1}$ and $\Omega_{2}$ is inhomogeneous with a variable dielectric permittivity $\varepsilon_{j}(x,y)$ , respectively and the same  variable magnetic permeability $\mu(x,y).$
Assume further that $\varepsilon_{j}(x,y)\in L^{\infty} (\Omega_j)$ and $\mu(x,y)\in L^{\infty} (\Omega_j)$  and  satisfy
 \[
 0 < \varepsilon_{j, \rm min} \leq \varepsilon_j \leq \varepsilon_{j, \rm max}<
\infty, \quad 0 < \mu_{\rm min} \leq \mu\leq \mu_{\rm max} < \infty ~\quad \text {for}~j=1,2.
 \]

\subsubsection{Transparent boundary condition}
In TE polarization, the three-dimensional Maxwell equations can be  reduced to the two-dimensional wave equation with initial-boundary value problem
\begin{align}\label{a3.1}
 \begin{cases}
  \varepsilon \partial_t^2 u - \nabla \cdot \left(  \mu^{-1} \nabla u\right)=0 \quad  &\text {in} ~~~\Omega^{\rm e}\cup\Omega_1\cup \Omega_2,~~~t>0,\\
  u \big|_{t=0}=0, \quad \partial_t u \big|_{t=0} =0\quad & \text {in}~~~\Omega^{\rm e}\cup\Omega_1\cup \Omega_2,\\
  u=0\quad &\text{on} ~~ \Gamma^{\rm c}\cup S_1 \cup S_2  , ~~t>0.
\end{cases}
\end{align}
Let the plane wave $u^{\rm inc}$ be incident on the cavities from above.
 Due to the interaction between the incident wave and the ground plane and the  two cavity, it can be shown that
 the total field $u$ is composed  of the incident field $u^{\rm inc}$, the reflected field $u^{\rm r}$ and the scattered field $u^{\rm sc}$. The scattered  field $u^{\rm sc}$ is also  required to satisfy the radiation condition \eqref{SR}.

To reduce the scattering problem from the open domain $\Omega^{\rm e}\cup\Omega_1\cup \Omega_2$ into the bounded domain, we need to derive transparent boundary conditions on the apertures $\Gamma_{1}$ and $\Gamma_{2}$.
We want to reduce \eqref{a3.1} into two single cavity scattering problem: for $ j=1,2$,
\begin{align}\label{aab}
 \begin{cases}
  \varepsilon_{j} \partial_t^2 u_{j} - \nabla \cdot \left(  \mu^{-1} \nabla u_{j}\right)=0 \quad  &\text {in} ~~~\Omega_{j},~~t>0,\\
  u_{j} \big|_{t=0}=0, \quad \partial_t u_{j} \big|_{t=0} =0\quad & \text {in}~~\Omega_{j},\\
  u_{j}=0  \quad & \text {on}~~S_{j}, ~~t>0,\\
  \partial_{\boldsymbol n} u_{j} = \mathscr T u_{j}+g \quad & \text {on}~~\Gamma_{j},~~t>0,
\end{cases}
\end{align}
where $\mathscr T$ is transparent boundary conditions in time-domain.
 Obviously, if $u$ is the solution of \eqref{a3.1},  then  $u_j$  are solutions of \eqref{aab}. Moreover, it has
 $u|_{\Omega_{j}}=u_{j}$.

Due to the homogeneous medium in the upper half space $\Omega^{\rm e}$ and the radiation condition \eqref{SR}, after taking the Laplace transform with respect to $t$,
 the scattered field $\breve u^{\rm sc}$ still satisfies the same ordinary differential equation \eqref{abc}.
   Thus, in $s$-domain and in the time-domain,  the transparent boundary condition can be respectively written as
\begin{align}\label{abd}
\partial_{\mathbf{n}}\breve{u}=\mathscr B \breve{u} +\breve{g}, \quad\partial_{\mathbf{n}}u=\mathscr T u +g  ~\quad &\text {on}~~~\Gamma^{\rm c}\cup\Gamma_{1}\cup\Gamma_{2}.
\end{align}

For and  $u_{j}(x,0) (j=1,2)$ defined on $\Gamma_j$,
define the extension  to the whole $x$-axis by
\begin{align*}
\tilde{u}_{j}(x,0)=
 \begin{cases}
  u_{j}(x,0) ~~\quad & \text{for} ~x\in \Gamma_{j},\\
  0          ~~\quad & \text{for} ~x\in \mathbb R\backslash\Gamma_{j}.
\end{cases}
\end{align*}
For the total field $u(x,0)$, define its extension to the whole $x$-axis by
\begin{align*}
\tilde{u}(x,0)=
 \begin{cases}
  u_{1}(x,0) ~~\quad & \text{for} ~x\in \Gamma_{1},\\
  u_{2}(x,0) ~~\quad & \text{for} ~x\in \Gamma_{2},\\
  0          ~~\quad & \text{for} ~x\in \Gamma^{\rm c}.
\end{cases}
\end{align*}
It follows from the definitions of these extensions that
\begin{align*}
\tilde{u}=\tilde{u}_{1}+\tilde{u}_{2} ~~\quad \text{on}~ \Gamma^{\rm c}\cup\Gamma_{1}\cup\Gamma_{2}.
\end{align*}
The transparent boundary conditions  \eqref{abd} can be respectively  written  as
\begin{align*}
\partial_{\mathbf{n}}\breve{ \tilde u}=\mathscr B {\breve{ \tilde u}} +\breve{g}  ~\quad \text {on}~~~\Gamma^{\rm c}\cup\Gamma_{1}\cup\Gamma_{2}; \quad
\partial_{\mathbf{n}}{\tilde u}=\mathscr T \tilde{u} +g  ~\quad \text {on}~~~\Gamma^{\rm c}\cup\Gamma_{1}\cup\Gamma_{2}, ~t>0.
\end{align*}
These  lead to the transparent boundary conditions for $u_{j}$ on $\Gamma_j$ in  frequency domain and time-domain, respectively:
\begin{align}\label{aae}
\partial_{\mathbf{n}}\breve{ u}_{1}=\mathscr B{\breve{ \tilde u}}_{1}+\mathscr B{\breve{ \tilde u}}_{2}+\breve{g} ~~\quad \text{on} ~\Gamma_{1};\quad
\partial_{\mathbf{n}}u_{1}=\mathscr T \tilde{u}_{1}+\mathscr T \tilde{u}_{2}+g ~~\quad \text{on} ~\Gamma_{1},~t>0,
\end{align}
and
\begin{align}\label{aaf}
\partial_{\mathbf{n}}\breve{u}_{2}=\mathscr B{\breve{\tilde u}}_{2}+\mathscr B\tilde{\breve{u}}_{1}+\breve{g} ~~\quad \text{on}~ \Gamma_{2};
\quad
\partial_{\mathbf{n}}u_{2}=\mathscr T \tilde{u}_{2}+\mathscr T \tilde{u}_{1}+g ~~\quad \text{on}~ \Gamma_{2},~t>0.
\end{align}
From \eqref{aae} and \eqref{aaf}, we find the boundary conditions for $u_{1}$ and $u_{2}$ are coupled with each other, which is the major difference between the single cavity scattering problem.

The following two lemmas are  analogous to Lemmas \ref{op1}--\ref{op2}, which
 will be used to analysis the uniqueness and existence for the solution of the two cavity scattering problem.
\begin{lemm}\label{aag}
 It holds that
 \begin{align*}
  -{\rm Re}\left( \langle (s\mu)^{-1} \mathscr B u, u \rangle_{\Gamma_1}+\langle(s\mu)^{-1} \mathscr B v, u \rangle_{\Gamma_1}+\langle(s\mu)^{-1} \mathscr B v, v \rangle_{\Gamma_2}+\langle(s\mu)^{-1} \mathscr B u, v \rangle_{\Gamma_2}\right) \geq 0, \quad u,v\in H^{1/2} (\mathbb R).
 \end{align*}

\end{lemm}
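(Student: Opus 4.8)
The plan is to reduce this four-term inequality to the single-aperture result already proved in Lemma \ref{op1}. The key observation is that the extensions $\tilde u$ and $\tilde v$ (zero outside $\Gamma_1$ and $\Gamma_2$ respectively) live on disjoint supports, so by the equivalence $\langle u,v\rangle_{\Gamma_j}=\langle\tilde u,\tilde v\rangle$ between the aperture pairing and the $L^2(\mathbb R)$ product of extensions, I can rewrite each of the four terms over $\mathbb R$. Specifically, $\langle(s\mu)^{-1}\mathscr B u,u\rangle_{\Gamma_1}=\langle(s\mu)^{-1}\mathscr B\tilde u,\tilde u\rangle$, and similarly for the $\Gamma_2$ term in $v$, while the two cross terms become $\langle(s\mu)^{-1}\mathscr B\tilde v,\tilde u\rangle$ and $\langle(s\mu)^{-1}\mathscr B\tilde u,\tilde v\rangle$.

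First I would add the four rewritten terms and recognize the sum as a single expression in $w:=\tilde u+\tilde v$. Because $\mathscr B$ is linear and the pairing is bilinear, the four terms collapse:
\begin{align*}
\langle(s\mu)^{-1}\mathscr B\tilde u,\tilde u\rangle+\langle(s\mu)^{-1}\mathscr B\tilde v,\tilde u\rangle+\langle(s\mu)^{-1}\mathscr B\tilde v,\tilde v\rangle+\langle(s\mu)^{-1}\mathscr B\tilde u,\tilde v\rangle=\langle(s\mu)^{-1}\mathscr B w,w\rangle.
\end{align*}
Once this algebraic regrouping is done, the statement is exactly $-{\rm Re}\,\langle(s\mu)^{-1}\mathscr B w,w\rangle\ge 0$ for $w\in H^{1/2}(\mathbb R)$, which is precisely the conclusion of Lemma \ref{op1} applied to $w$. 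So the whole proof is: rewrite via extensions, regroup into $w$, invoke Lemma \ref{op1}.

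The one point requiring care is the ordering of arguments in the cross terms. The Lemma as stated pairs $\mathscr B v$ against $u$ on $\Gamma_1$ but $\mathscr B u$ against $v$ on $\Gamma_2$, so I must check that after passing to extensions these genuinely produce the two off-diagonal contributions $\langle(s\mu)^{-1}\mathscr B\tilde v,\tilde u\rangle$ and $\langle(s\mu)^{-1}\mathscr B\tilde u,\tilde v\rangle$ that complete the expansion of $\langle(s\mu)^{-1}\mathscr B(\tilde u+\tilde v),\tilde u+\tilde v\rangle$ — this is really the step where one must be honest that the pairing is bilinear (not sesquilinear) in the relevant sense, matching how $\mathscr B$ acts on the Fourier side in \eqref{DF}. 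I expect this bookkeeping of which function is conjugated and which aperture each term is restricted to, together with confirming the disjoint-support identity does not spuriously introduce $\Gamma^{\rm c}$ contributions, to be the main (though modest) obstacle; the positivity itself is entirely inherited from Lemma \ref{op1} and needs no new estimate.
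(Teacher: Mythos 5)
Your proposal is correct and follows essentially the same route as the paper: both arguments hinge on recognizing that the four pairings, once transferred to $L^2(\mathbb R)$ via the disjointly supported extensions, assemble into the single quadratic form at $w=\tilde u+\tilde v$ (the paper does this at the Fourier-integrand level by writing $|u|^2+|v|^2+v\bar u+u\bar v=|u+v|^2$ and repeating the sign computation, whereas you invoke Lemma \ref{op1} directly, which is a purely cosmetic difference). Your cautionary remark about the cross terms is well placed but resolves exactly as you expect, since the sesquilinear expansion of $\langle(s\mu)^{-1}\mathscr B(\tilde u+\tilde v),\tilde u+\tilde v\rangle$ produces precisely the two off-diagonal terms in the statement.
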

\begin{proof}
Recalling $\beta^{2}(\xi)=\xi^{2}+c^{-2}s^{2}$ and using \eqref{ht},
we get
\begin{align*}
{\rm Re}&\left(\langle (s\mu)^{-1} \mathscr B u, u \rangle_{\Gamma_1}+\langle(s\mu)^{-1} \mathscr B v, u \rangle_{\Gamma_1}+\langle(s\mu)^{-1} \mathscr B v, v \rangle_{\Gamma_2}+\langle(s\mu)^{-1} \mathscr B u, v \rangle_{\Gamma_2}\right) \\
=& {\rm Re}\int_{\mathbb R}(s\mu)^{-1}\beta(\xi)(|u|^{2}+|v|^{2}+ v\bar{u}+u\bar{v}){\rm d}\xi\\
=&\int_{\mathbb R}\frac{1}{\mu|s|^{2}}\frac{s_{1}}{\zeta}(\zeta^{2}+c^{-2}s_{2}^{2})(|u+v|^{2}){\rm d}\xi\leq 0,
\end{align*}
 where $\beta(\xi)=\zeta+{\rm i}\varrho$ with $\zeta < 0$.

\end{proof}

\begin{lemm}\label{aah}
 For any $u(\cdot, t) \in L^2 (0, T; H^{1/2} (\Gamma_1)), v(\cdot, t)\in L^2 (0, T; H^{1/2} (\Gamma_2))$ with initial values $u (\cdot, 0)=0,v (\cdot, 0)=0$, denote their zero extension on $ L^2 (0, T; H^{1/2} (\mathbb R))$ by $\tilde u (\cdot, t)$ and $\tilde v (\cdot, t)$, respectively. Then,
  it holds that
 \begin{align*}
  - {\rm Re}\int_0^T \left(\langle \mathscr T \tilde u , \partial_t \tilde u  \rangle_{\Gamma_1} +\langle \mathscr T \tilde v , \partial_t \tilde  u  \rangle_{\Gamma_1}+\langle \mathscr T \tilde v , \partial_t \tilde v \rangle_{\Gamma_2}+\langle \mathscr T \tilde u , \partial_t \tilde v  \rangle_{\Gamma_2}\right) {\rm d} t \geq 0.
 \end{align*}

\end{lemm}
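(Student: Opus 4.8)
The plan is to mirror the proof of Lemma \ref{op2}, promoting its single-aperture argument to the coupled two-aperture setting, with Lemma \ref{aag} playing the role that Lemma \ref{op1} played there. First I would record the two structural facts to lean on: since $\mathscr T=\mathscr L^{-1}\circ\mathscr B\circ\mathscr L$, the Laplace transform of $\mathscr T\tilde u$ is $\mathscr B\breve{\tilde u}$ (and likewise for $\tilde v$); and since $\tilde u(\cdot,0)=\tilde v(\cdot,0)=0$, we have $\mathscr L(\partial_t\tilde u)=s\breve{\tilde u}$ and $\mathscr L(\partial_t\tilde v)=s\breve{\tilde v}$. Because $\tilde u,\tilde v$ vanish outside $[0,T]$, I may replace $\int_0^T$ by $\int_0^\infty$ and insert the convergence weight $e^{-2s_1 t}$ for any $s_1>0$, which is exactly the form required by the Parseval identity \eqref{PI}.

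Next I would treat the four terms one at a time. For the diagonal term on $\Gamma_1$, Fubini lets me pull the spatial pairing over $\Gamma_1$ outside, apply \eqref{PI} in the time variable, and obtain $-\tfrac{1}{2\pi}\int_{-\infty}^{\infty}{\rm Re}\,\langle\mathscr B\breve{\tilde u},\,s\breve{\tilde u}\rangle_{\Gamma_1}\,{\rm d}s_2$. Using that the conjugation in the pairing turns the factor $s$ into $\bar s$, while $(s\mu)^{-1}$ reintroduces it, we get $\langle\mathscr B\breve{\tilde u},\,s\breve{\tilde u}\rangle=|s|^2\mu\,\langle(s\mu)^{-1}\mathscr B\breve{\tilde u},\breve{\tilde u}\rangle$, so this term equals $-\tfrac{1}{2\pi}\int_{-\infty}^{\infty}|s|^2\mu\,{\rm Re}\,\langle(s\mu)^{-1}\mathscr B\breve{\tilde u},\breve{\tilde u}\rangle_{\Gamma_1}\,{\rm d}s_2$. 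The identical manipulation applied to the remaining three terms produces the cross pairing on $\Gamma_1$, the diagonal on $\Gamma_2$, and the cross pairing on $\Gamma_2$, each carrying the common positive weight $|s|^2\mu$.

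Summing the four contributions, the integrand becomes $|s|^2\mu$ times the real part of precisely the quadratic form appearing in Lemma \ref{aag}, evaluated at $\breve{\tilde u},\breve{\tilde v}$. Lemma \ref{aag} asserts that the \emph{negative} of this real part is nonnegative, so for every fixed $s_1>0$ the whole frequency integral is $\geq 0$. Letting $s_1\to 0$ then yields the claimed inequality, exactly as in the closing line of Lemma \ref{op2}.

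I expect the main obstacle to be the bookkeeping of the two zero-extensions under the nonlocal operator $\mathscr B$. The point is that $\mathscr B$ couples data supported on the disjoint apertures, so I must keep $\tilde u,\tilde v$ as genuine functions on all of $\mathbb R$; the disjoint supports are used not to kill the cross pairings but to identify each $\langle\cdot,\cdot\rangle_{\Gamma_j}$ with a full-line pairing (because the conjugated argument vanishes off $\Gamma_j$), which is exactly what makes the summed $s$-domain form coincide with the Fourier-side computation underlying Lemma \ref{aag}. A secondary technical point is justifying the Fubini interchange together with the passage $s_1\to 0$; this is handled as in Lemma \ref{op2}, using the holomorphy of the transforms and the polynomial growth bound supplied by Lemmas \ref{A2} and \ref{ct1}.
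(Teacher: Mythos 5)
Your proposal is correct and follows essentially the same route as the paper: extend $\tilde u,\tilde v$ by zero in time, apply the Parseval identity \eqref{PI} to pass to the $s$-domain, rewrite the four pairings with the weight $|s|^2\mu$ so that the integrand becomes the quadratic form of Lemma \ref{aag}, and let $s_1\to 0$. The only differences are cosmetic (the paper writes the normalized form with $s^{-1}$ rather than $(s\mu)^{-1}$ before invoking Lemma \ref{aag}), and your extra remarks on the role of the zero extensions and the Fubini interchange only make the argument more explicit.
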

\begin{proof}
 Let $\tilde {\tilde  u} (\cdot, t),\tilde{ \tilde v}(\cdot, t)$ be the extension of $\tilde u (\cdot, t),  \tilde v (\cdot, t)$ with respect to $t$ in $\mathbb R$ such that
 $\tilde {\tilde u} (\cdot, t)= 0, \tilde {\tilde  v} (\cdot, t)= 0$ outside the interval $[0, T]$, and $\breve {\tilde {\tilde u}} =\mathscr L (\tilde {\tilde u}), \breve {\tilde {\tilde v}} =\mathscr L (\tilde {\tilde v})$ be the Laplace transform  of $\tilde {\tilde u}, \tilde {\tilde v}$. By the Parseval identity \eqref{PI}, we get
 \begin{align*}
  &-{\rm Re} \int_0^{T} e^{- 2 s_1 t} \left(\langle \mathscr T \tilde u, \partial_t \tilde u \rangle_{\Gamma_1}+\langle \mathscr T \tilde v, \partial_t \tilde u \rangle_{\Gamma_1}+\langle \mathscr T \tilde v, \partial_t \tilde v \rangle_{\Gamma_2}+\langle \mathscr T \tilde u, \partial_t \tilde v \rangle_{\Gamma_2}\right) {\rm d} t \\
  =& - {\rm Re}\overset{2}{\underset{j=1}{\sum}} \left(\int_{\Gamma_{j}} \int_0^{\infty} e^{- 2 s_1 t} (\mathscr T \tilde {\tilde u}) \partial_t \bar {\tilde {\tilde u}} {\rm d} t{\rm d} \gamma_{j}+\int_{\Gamma_{j}} \int_0^{\infty} e^{- 2 s_1 t} (\mathscr T \tilde {\tilde v}) \partial_t \bar {\tilde {\tilde u}} {\rm d} t{\rm d} \gamma_{j}\right)\\
  =&-\frac{1}{2 \pi} \int_{-\infty}^{\infty}  {\rm Re}\left( \langle \mathscr B \breve {\tilde {\tilde u}}, s \breve{\tilde {\tilde u}} \rangle_{\Gamma_{1}}+\langle \mathscr B \breve {\tilde {\tilde v}}, s \breve{\tilde {\tilde u}} \rangle_{\Gamma_{1}}+\langle \mathscr B \breve {\tilde {\tilde v}}, s \breve{\tilde {\tilde  v}} \rangle_{\Gamma_{2}}+\langle \mathscr B \breve {\tilde {\tilde  u}}, s \breve{\tilde {\tilde v}} \rangle_{\Gamma_{2}}\right){\rm d} s_2\\
  =&-\frac{1}{2 \pi} \int_{-\infty}^{\infty} |s|^{2} {\rm Re}\left( \langle s^{-1}\mathscr B \breve {\tilde {\tilde u}},  \breve{\tilde {\tilde u}} \rangle_{\Gamma_{1}}+\langle s^{-1}\mathscr B \breve {\tilde {\tilde v}}, \breve{\tilde {\tilde u}} \rangle_{\Gamma_{1}}+\langle s^{-1}\mathscr B \breve {\tilde {\tilde v}},  \breve{\tilde {\tilde v}} \rangle_{\Gamma_{2}}+\langle s^{-1} \mathscr B \breve {\tilde {\tilde u}},  \breve{\tilde {\tilde v}} \rangle_{\Gamma_{2}}\right){\rm d} s_2.
 \end{align*}
 It follows from  Lemma \ref{aag} and $\beta(\xi)=\varsigma+{\rm i}\varrho $ with $\varsigma<0$ that
 \begin{align*}
 -{\rm Re} \int_0^{T} e^{- 2 s_1 t} \left(\langle \mathscr T \tilde u, \partial_t \tilde  u \rangle_{\Gamma_1}+\langle \mathscr T \tilde v, \partial_t \tilde  u \rangle_{\Gamma_1}+\langle \mathscr T \tilde  v, \partial_t  \tilde v \rangle_{\Gamma_2}+\langle \mathscr T \tilde u, \partial_t \tilde v \rangle_{\Gamma_2}\right) {\rm d} t\geq0
 \end{align*}
which  completes the proof
after taking $s_1 \rightarrow 0 $.
\end{proof}

\subsection{The reduced two cavity scattering problem}

In this section, we will discuss the well-posedness and stability  for the reduced problem  of  the two cavity scattering problem.
 Firstly, we denote $\Omega=\Omega_{1}\cup\Omega_{2}, \Gamma=\Gamma_{1}\cup\Gamma_{2}$, and $S=S_{1}\cup S_{2}$. Let
\begin{align*}
u=
\begin{cases}
u_{1} ~~\quad & \text{in} ~\Omega_{1},\\
u_{2} ~~\quad & \text{in} ~\Omega_{2}.
\end{cases}
\end{align*}
Define a trace functional space
\begin{align*}
\tilde{H}^{1/2}(\Gamma)=\tilde{H}^{1/2}(\Gamma_{1})\times\tilde{H}^{1/2}(\Gamma_{2}),
\end{align*}
whose norm is characterized by
$
\|u\|^{2}_{\tilde{H}^{1/2}(\Gamma)}=\|u_{1}\|^{2}_{\tilde{H}^{1/2}(\Gamma_{1})}+\|u_{2}\|^{2}_{\tilde{H}^{1/2}(\Gamma_{2})}.
$
Denote  by
\[H^{-1/2}(\Gamma)=H^{-1/2}(\Gamma_{1})\times H^{-1/2}(\Gamma_{2}),\]
 which is the dual space of $\tilde{H}^{1/2}(\Gamma)$. The norm on the space $H^{-1/2}(\Gamma)$ is characterized by
\begin{align*}
\|u\|^{2}_{H^{-1/2}(\Gamma)}=\|u_{1}\|^{2}_{H^{-1/2}(\Gamma_{1})}+\|u_{2}\|^{2}_{H^{-1/2}(\Gamma_{2})}.
\end{align*}
Define the  space
\begin{align*}
H^{1}_{\rm S}(\Omega)=H^{1}_{\rm S_1}(\Omega_{1})\times H^{1}_{\rm S_2}(\Omega_{2}),
\end{align*}
which is a Hilbert space with norm characterized by
$
\|u\|^{2}_{H_{\rm S}^{1}(\Omega)}=\|u_{1}\|^{2}_{H^{1}_{\rm S_1}(\Omega_{1})}+\|u_{2}\|^{2}_{H^{1}_{\rm S_2}(\Omega_{2})}.
$
\subsubsection{well-posedness in the $s$-domain}
Now we present a variational formulation for the two cavity scattering problem.
For $j=1, 2$, taking the Laplace transform of \eqref{aab},  we get
\begin{align}\label{ac}
\begin{cases}
\varepsilon_j s \breve u_j-\nabla\cdot \left(s^{-1} \mu^{-1} \nabla \breve u_j \right)=0 \quad &\text{in}~~\Omega_j,\\
\breve u_j=0 \quad &\text{on}~~S_j,\\
\partial_{\boldsymbol n} \breve u_j=\mathscr B \breve u_j+\breve g \quad &\text{on}~~\Gamma_j.
\end{cases}
\end{align}
Multiplying the complex conjugate of test function $v_{j}\in H^{1}_{\rm S_{j}}(\Omega_{j}),j=1,2$ on both sides of the first equation of \eqref{ac}, integrating over $\Omega_{j}$, we have
\begin{align*}
\int_{\Omega_{j}}(s\mu)^{-1}\nabla\breve{u}_{j}\nabla\bar{v}_{j}+s\varepsilon_{j}\breve{u}_{j}\bar{v}_{j}{\rm d}\boldsymbol \rho-\overset{2}{\underset{i=1}{\sum}}\langle(s\mu)^{-1}\mathscr B \tilde{\breve{u}}_{i}, \tilde{v}_{j}\rangle_{\Gamma_{j}}=\langle\breve{g}, v_{j}\rangle_{\Gamma_{j}}.
\end{align*}
We deduce the variational formulation for the two cavity scattering problem:
find $\breve u\in H^{1}_{\rm S}(\Omega)$ with  $\breve u|_{\Omega_{j}}=\breve u_{1}\in H^{1}_{\rm S_j}(\Omega_{j})$,
 such that for all $v\in H^{1}_{\rm S}(\Omega)$ with  $v_{j}=v|_{\Omega_{j}}\in H^{1}_{\rm S_j}(\Omega_{j})$, it holds
\begin{align}\label{aak}
a_2(\breve{u}, v)=\langle\breve{g}, v_{1}\rangle_{\Gamma_{1}}+\langle\breve{g}, v_{2}\rangle_{\Gamma_{2}},
\end{align}
where the sesquilinear form
\begin{align*}
a_2(\breve{u}, v)&=\overset{2}{\underset{j=1}{\sum}}\int_{\Omega_{j}}(s\mu)^{-1}\nabla\breve{u}_{j}\nabla\bar{v}_{j}+s\varepsilon_{j}\breve{u}_{j}\bar{v}_{j}{\rm d}\boldsymbol \rho-\overset{2}{\underset{j=1}{\sum}}\overset{2}{\underset{i=1}{\sum}}\langle(s\mu)^{-1}\mathscr B \tilde{\breve{u}}_{i}, \tilde{v}_{j}\rangle_{\Gamma_{j}}.
\end{align*}

\begin{theo}\label{aam}
 The variational problem \eqref{aak} has a unique solution $\breve{u} \in H^1_{\rm S} (\Omega)$ which satisfies
 \begin{align}\label{aaq}
  \|\nabla \breve{u}\|_{L^2 (\Omega)^2} +\|s \breve{u}\|_{L^2 (\Omega)} \lesssim s_1^{-1} \| s\breve{g}\|_{H^{-1/2}(\Gamma)}.
 \end{align}
\end{theo}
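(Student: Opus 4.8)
The plan is to invoke the Lax--Milgram lemma, following verbatim the single-cavity argument in the proof of Theorem \ref{at1}, with the understanding that the only genuinely new feature is the coupling between the two apertures carried by the off-diagonal boundary terms in $a_2$. Thus I must verify that $a_2$ is bounded and coercive on $H^1_{\rm S}(\Omega)$, and then extract the estimate \eqref{aaq} exactly as before.

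For continuity, I would bound each of the two volume integrals by the Cauchy--Schwarz inequality, producing $|s|^{-1}\mu_{\min}^{-1}\|\nabla\breve u_j\|_{L^2}\|\nabla v_j\|_{L^2}+|s|\varepsilon_{j,\max}\|\breve u_j\|_{L^2}\|v_j\|_{L^2}$, and each of the four boundary pairings $\langle(s\mu)^{-1}\mathscr B\tilde{\breve u}_i,\tilde v_j\rangle_{\Gamma_j}$ by first applying Lemma \ref{ct1} to get $\|\mathscr B\tilde{\breve u}_i\|_{H^{-1/2}(\mathbb R)}\lesssim\|\tilde{\breve u}_i\|_{H^{1/2}(\mathbb R)}$ and then the trace theorem Lemma \ref{tt10} to replace the $H^{1/2}$ trace norms of $\breve u_i$ and $v_j$ on $\Gamma_i,\Gamma_j$ by their $H^1$ norms on $\Omega_i,\Omega_j$. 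Summing over $i,j=1,2$ and using the product-space norms yields $|a_2(\breve u,v)|\lesssim\|\breve u\|_{H^1(\Omega)}\|v\|_{H^1(\Omega)}$. The off-diagonal terms ($i\neq j$) are handled identically to the diagonal ones, since the zero extensions $\tilde{\breve u}_i$ lie in $H^{1/2}(\mathbb R)$ irrespective of which aperture they originate from.

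For coercivity, I would set $v=\breve u$ and take the real part of $a_2(\breve u,\breve u)$. The two volume integrals contribute, exactly as in \eqref{rp1}, the lower bound $C_1\,s_1|s|^{-2}\bigl(\|\nabla\breve u\|_{L^2(\Omega)^2}^2+\|s\breve u\|_{L^2(\Omega)}^2\bigr)$ with $C_1=\min\{\mu_{\max}^{-1},1\}$. The crucial point is the boundary contribution, which is precisely $-{\rm Re}\sum_{j,i}\langle(s\mu)^{-1}\mathscr B\tilde{\breve u}_i,\tilde{\breve u}_j\rangle_{\Gamma_j}$; upon identifying $\tilde{\breve u}_1$ with $u$ and $\tilde{\breve u}_2$ with $v$ in the statement of Lemma \ref{aag}, this is exactly the quantity shown there to be nonnegative. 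Hence the entire boundary term only reinforces the lower bound, and coercivity follows with the same constant as in the single-cavity case. This is the step I expect to be the main obstacle conceptually, because the individual cross terms $\langle(s\mu)^{-1}\mathscr B v,u\rangle_{\Gamma_1}$ and $\langle(s\mu)^{-1}\mathscr B u,v\rangle_{\Gamma_2}$ have no definite sign in isolation; it is only their assembly into the full coupled form, which completes the square into a multiple of $|u+v|^2$ as in the proof of Lemma \ref{aag}, that produces the required sign.

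Finally, once existence and uniqueness are secured by Lax--Milgram, I would derive \eqref{aaq} following the closing argument of Theorem \ref{at1}. From \eqref{aak} the right-hand side obeys $|a_2(\breve u,\breve u)|=|\langle\breve g,\breve u_1\rangle_{\Gamma_1}+\langle\breve g,\breve u_2\rangle_{\Gamma_2}|\le\|\breve g\|_{H^{-1/2}(\Gamma)}\|\breve u\|_{\tilde H^{1/2}(\Gamma)}$, and the trace theorem together with $\|\breve u\|_{L^2}=|s|^{-1}\|s\breve u\|_{L^2}$ turns this into the analogue of \eqref{rp2}. Combining it with the coercivity lower bound, dividing through, and applying the Cauchy--Schwarz inequality then reproduces the stated estimate \eqref{aaq}. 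Here the product-space norms on $\tilde H^{1/2}(\Gamma)$ and $H^{-1/2}(\Gamma)$ defined above make the two-component data term estimate transparently, so no new ingredient beyond the single-cavity bookkeeping is needed.
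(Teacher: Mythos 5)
Your proposal is correct and follows essentially the same route as the paper's own proof: continuity via Cauchy--Schwarz, Lemma \ref{ct1} and the trace theorem; coercivity by taking $v=\breve u$ and invoking Lemma \ref{aag} for the sign of the assembled boundary terms (including your correct observation that only the full coupled sum, not the individual cross pairings, has a definite sign); then Lax--Milgram and the duality bound $|a_2(\breve u,\breve u)|\le |s|^{-1}\|\breve g\|_{H^{-1/2}(\Gamma)}\|s\breve u\|_{L^2(\Omega)}$ combined with the coercivity lower bound to obtain \eqref{aaq}. No substantive difference from the paper's argument.
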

\begin{proof}
The continuity  of the sesquilinear follows   directly from the Cauchy--Schwarz inequality, Lemma \ref{ct1} and Lemma \ref{tt10},
 \begin{align*}
  |a_{2} (\breve{u}, v)|
  \leq &\overset{2}{\underset{j=1}{\sum}}\left(\frac{1}{|s| \mu_{\min}} \|\nabla \breve{u}_{j}\|_{L^2(\Omega_{j})^2} \|\nabla v_{j}\|_{L^2 (\Omega_{j})^2}
  +|s|\varepsilon_{\max}\|\breve{u}_{j}\|_{L^2 (\Omega_{j})}\|v_{j}\|_{L^2 (\Omega_{j})}\right)\\
  &+\frac{1}{|s| \mu_{\min}}\overset{2}{\underset{j=1}{\sum}}\overset{2}{\underset{i=1}{\sum}}\|\mathscr B \breve{u}_{i}\|_{H^{-1/2} (\Gamma_{j})} \|v_{j}\|_{H^{1/2}(\Gamma_{j})}\\
   \lesssim & \|\nabla \breve{u}\|_{L^2(\Omega)^2} \|\nabla v\|_{L^2 (\Omega)^2}
  +\|\breve{u}\|_{L^2 (\Omega)}\|v\|_{L^2 (\Omega)}+  \|\breve{u}\|_{H^{1/2} (\Gamma)} \|v\|_{H^{1/2} (\Gamma)}\\
   \lesssim &\|\breve{u}\|_{H^1(\Omega)} \|v\|_{H^1(\Omega)},
 \end{align*}
 where $\varepsilon_{\max}=\max\{\varepsilon_{1}, \varepsilon_{2}\}$.
  It suffices to show the coercivity of  $a_{2} (\breve{u}, v)$.
A simple calculation yields
\begin{align}\label{aan}
 a_{2} (\breve{u}, \breve{u}) &=\overset{2}{\underset{j=1}{\sum}}\int_{\Omega_{j}}(s\mu)^{-1}|\nabla\breve{u}_{j}|^{2}+s\varepsilon_{j}|\breve{u}_{j}|^{2}{\rm d}\boldsymbol \rho-\overset{2}{\underset{j=1}{\sum}}\overset{2}{\underset{i=1}{\sum}}\langle(s\mu)^{-1}\mathscr B \tilde{\breve{u}}_{i}, \tilde{\breve{u}}_{j}\rangle_{\Gamma_{j}}.
\end{align}
Taking the real part of \eqref{aan} and using Lemma \ref{aag}, we get
\begin{align}\label{aao}
 {\rm Re} \left(  a_{2} (\breve{u}, \breve{u}) \right) \geq C_{1} \frac{s_1}{|s|^2} \left( \|\nabla \breve{u}\|^2_{L^2 (\Omega)^2}  +\|s \breve{u}\|^2_{L^2 (\Omega)}\right),
\end{align}
where $C_{1}=\min\{\mu^{-1}_{\max},1\}$.

It follows from the Lax--Milgram lemma that the variational problem \eqref{aak} has a unique solution $\breve{u} \in H^1_{\rm S} (\Omega)$ and satisfies $u|_{\Omega_j}=u_i.$
Moreover, we have from \eqref{aak} that
\begin{align}\label{aap}
 |a_{2}(\breve{u}, \breve{u})| \leq |s|^{-1} \|\breve{g}\|_{H^{-1/2}(\Gamma)} \|s \breve{u}\|_{L^2 (\Omega)}.
\end{align}
Combining \eqref{aao}--\eqref{aap} leads to
\begin{align*}
 \|\nabla \breve{u}\|^2_{L^2 (\Omega)^2}  +\|s \breve{u}\|^2_{L^2 (\Omega)} \lesssim s_1^{-1}  \| s \breve{g}\|_{H^{-1/2}(\Gamma)} \|s \breve{u}\|_{L^2 (\Omega)},
\end{align*}
which completes the proof of estimates of \eqref{aaq} after applying the Cauchy--Schwarz inequality.

\end{proof}

\subsubsection{well-posedness in the time-domain}
Using the time-domain transparent boundary conditions \eqref{aae}--\eqref{aaf}, problem \eqref{a3.1} can be equivalently
reduced to the  initial-boundary value problem
\begin{align}\label{aar}
\begin{cases}
 \varepsilon \partial_t^2 u - \nabla \cdot \left(  \mu^{-1} \nabla u\right)=0 \quad  &\text {in} ~~~\Omega,~~~t>0,\\
u\big|_{t=0}=0, \quad \partial_t u \big|_{t=0} =0\quad & \text {in}~~~\Omega,\\
  u=0  \quad & \text {on}~~~S, ~~~~t>0,\\
 \partial_r u = \mathscr T u+g \quad & \text {on}~~~\Gamma,~~~~t>0.
\end{cases}
\end{align}
\begin{theo}\label{aay}
The initial-boundary problem \eqref{aar} has a unique solution $u$, which satisfies
\begin{align*}
u\in L^{2}(0, T; H^{1}_{S}(\Omega))\cap H^{1}(0, T; L^{2}(\Omega)),
\end{align*}
and the stability estimate
\begin{align} \label{aaz}
 \begin{split}
 \max\limits_{t\in[t,T]}&\left(\|\partial_t u\|_{L^2 (\Omega)}  +\|\partial_{t}(\nabla u)\|_{L^2 (\Omega)^2}\right) \\
 & \lesssim \left(\|g\|_{L^{1}(0, T; H^{-1/2}(\Gamma))}+ \max\limits_{t\in[t,T]}\|\partial_{t}g\|_{H^{-1/2}(\Gamma)}+\|\partial^{2}_{t}g\|_{L^{1}(0, T; H^{-1/2}(\Gamma))}\right).
 \end{split}
\end{align}
\end{theo}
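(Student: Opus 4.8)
The plan is to follow the same two-stage strategy used for the single-cavity Theorem~\ref{ES2}, now treating the pair $u=(u_1,u_2)$ as a single unknown on $\Omega=\Omega_1\cup\Omega_2$ and replacing the scalar positivity Lemmas~\ref{op1}--\ref{op2} by their coupled counterparts Lemmas~\ref{aag}--\ref{aah}. Stage one establishes existence, uniqueness, and the regularity $u\in L^2(0,T;H^1_{\rm S}(\Omega))\cap H^1(0,T;L^2(\Omega))$ by transferring the $s$-domain estimate \eqref{aaq} to the time domain; stage two proves the stability bound \eqref{aaz} by an energy argument in which the coupled boundary term is forced to have a favourable sign by Lemma~\ref{aah}.

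For the first stage, I would begin from the majorization $\int_0^T(\cdots)\,{\rm d}t\lesssim\int_0^\infty e^{-2s_1t}(\cdots)\,{\rm d}t$ exactly as in Theorem~\ref{ES2}, apply the Parseval identity \eqref{PI} to rewrite the right-hand side in the $s$-variable, and then invoke Theorem~\ref{aam} to bound $\|\nabla\breve u\|_{L^2(\Omega)^2}^2+\|s\breve u\|_{L^2(\Omega)}^2$ by $s_1^{-2}|s|^2\|\breve g\|_{H^{-1/2}(\Gamma)}^2$. Since $g$ is built from the incident and reflected fields, one has $\|\breve g\|_{H^{-1/2}(\Gamma)}\lesssim\|\breve u^{\rm inc}+\breve u^{\rm r}\|_{H^1(\Omega)}$, and the identity $|s|^2(\breve u^{\rm inc}+\breve u^{\rm r})=2s_1\mathscr L(\partial_t(u^{\rm inc}+u^{\rm r}))-\mathscr L(\partial_t^2(u^{\rm inc}+u^{\rm r}))$, valid because the data and their first derivatives vanish at $t=0$, lets me return via \eqref{PI} once more to time-domain norms of $\partial_t$ and $\partial_t^2$ of the data. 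Holomorphy of $\breve u$ on $s_1>\sigma_0$ together with Lemma~\ref{A2} then guarantees that the inverse transform exists and is supported in $(0,\infty)$, yielding the claimed regularity.

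For the stability I would introduce the energy $e_1(t)=\|\varepsilon^{1/2}\partial_t u(\cdot,t)\|_{L^2(\Omega)}^2+\|\mu^{-1/2}\nabla u(\cdot,t)\|_{L^2(\Omega)^2}^2$, with all integrals running over $\Omega_1\cup\Omega_2$. Differentiating in $t$, integrating by parts over each $\Omega_j$, and using \eqref{aar} produces, after cancellation of the volume terms, a boundary contribution which, because of the coupled TBC \eqref{aae}--\eqref{aaf}, is precisely the four-fold sum $2\,{\rm Re}\int_0^t(\langle\mathscr T\tilde u_1,\partial_t\tilde u_1\rangle_{\Gamma_1}+\langle\mathscr T\tilde u_2,\partial_t\tilde u_1\rangle_{\Gamma_1}+\langle\mathscr T\tilde u_2,\partial_t\tilde u_2\rangle_{\Gamma_2}+\langle\mathscr T\tilde u_1,\partial_t\tilde u_2\rangle_{\Gamma_2})\,{\rm d}\tau$ together with the forcing term $2\,{\rm Re}\int_0^t\langle g,\partial_t u\rangle_\Gamma\,{\rm d}\tau$. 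Lemma~\ref{aah} shows the former is nonpositive, so it may be dropped, leaving $e_1(t)$ controlled by $\max_{t\in[0,T]}\|\partial_t u\|_{H^1(\Omega)}$ times $\|g\|_{L^1(0,T;H^{-1/2}(\Gamma))}$. As in the single-cavity case this still carries the uncontrollable factor $\partial_t\nabla u$ on the right, so I would differentiate \eqref{aar} in $t$ (replacing $g$ by $\partial_t g$) and rerun the computation with $e_2(t)=\|\varepsilon^{1/2}\partial_t^2 u\|_{L^2(\Omega)}^2+\|\mu^{-1/2}\partial_t\nabla u\|_{L^2(\Omega)^2}^2$, integrating the forcing term by parts in $t$ to generate $\max_{t\in[0,T]}\|\partial_t g\|_{H^{-1/2}(\Gamma)}$ and $\|\partial_t^2 g\|_{L^1(0,T;H^{-1/2}(\Gamma))}$; combining the two energy bounds and absorbing $\max_{t\in[0,T]}\|\partial_t u\|_{H^1(\Omega)}$ with Young's inequality delivers \eqref{aaz}.

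The main obstacle is the coupling of the apertures: the Neumann data on $\Gamma_1$ depends on both $\tilde u_1$ and $\tilde u_2$, and symmetrically on $\Gamma_2$, so the boundary term in the energy identity is not a single pairing whose sign is settled by Lemma~\ref{op2}, but the four-fold sum above. The crux is therefore to verify that Lemma~\ref{aah} applies verbatim, i.e. that the zero extensions $\tilde u_1,\tilde u_2$ to the whole line make the aperture pairings $\langle\cdot,\cdot\rangle_{\Gamma_j}$ coincide with full-line inner products, so that the Parseval reduction to Lemma~\ref{aag}, which collapses the four terms into the single nonpositive quantity $\int_{\mathbb R}\mu^{-1}|s|^{-2}s_1\varsigma^{-1}(\varsigma^2+c^{-2}s_2^2)|u+v|^2\,{\rm d}\xi$ with $\varsigma<0$, is legitimate. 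Once this sign is secured, the remaining steps are a routine transcription of the proof of Theorem~\ref{ES2}.
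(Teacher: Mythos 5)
Your proposal is correct and follows essentially the same route as the paper: regularity is transferred from the $s$-domain estimate of Theorem \ref{aam} via the Parseval identity and Lemma \ref{A2}, and the stability bound is obtained from the two energy functions (the paper's $e_3$ and $e_4$), with the coupled four-fold aperture pairing handled by Lemma \ref{aah} exactly as you describe. No gaps.
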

\begin{proof}
Using the similar  way as  one cavity scattering problem, we can get
\begin{align*}
  u\in L^2 \left( 0, T; H^1_{\rm S} (\Omega) \right) \cap H^1 \left( 0, T; L^2 (\Omega) \right).
\end{align*}

Next, we prove the stability.
For any $0<t<T$, define the energy function
\begin{align}\label{E1}
 e_3 (t) =\| \varepsilon^{1/2} \partial_t u (\cdot, t)\|^2_{L^2 (\Omega)} +\|  \mu^{-1/2} \nabla u (\cdot, t)\|^2_{L^2 (\Omega)^2}.
\end{align}
Integrating by parts,
it follows from \eqref{aab} that
\begin{align*}
 \int_0^t e_3'(\tau ) {\rm d} \tau
 =&e_3(t) -e_3 (0)  \\
 = &2 {\rm Re}\int_0^t \int_{\Omega_{1}}  \left( \varepsilon (\partial_t^2 u_{1}) \partial_t \bar u_{1} +\mu^{-1} (\nabla \partial_t u_{1}) \cdot \nabla \bar u_{1} \right){\rm d} \boldsymbol \rho {\rm d} \tau  \\
 &+2 {\rm Re}\int_0^t \int_{\Omega_{2}}\left( \varepsilon (\partial_t^2 u_{2}) \partial_t \bar u_{2} +\mu^{-1} (\nabla \partial_t u_{2}) \cdot \nabla \bar u_{2} \right ){\rm d} \boldsymbol \rho {\rm d} \tau \\
 = &\overset{2}{\underset{j=1}{\sum}}2 {\rm Re}\int_0^t \int_{\Omega_{j}} \left(  \nabla \cdot \left(  \mu^{-1} \nabla u_{j}\right) \partial_t \bar u_{j} +\mu^{-1} (\nabla \partial_t u_{j}) \cdot \nabla \bar u_{j} \right) {\rm d} \boldsymbol \rho {\rm d} \tau.
\end{align*}
Since $e_{3}(0)=0$, we obtain from Lemma \ref{aah} that
\begin{align}\label{E3}
e_3(t)=&\int_0^t e_3'(\tau ) {\rm d} \tau \nonumber \\
 =&2 {\rm Re} \int_0^t \int_{\Omega_{1}} \left(  - \mu^{-1} \nabla u_{1} \cdot (\nabla \partial_t \bar u_{1})
 +\mu^{-1} (\nabla \partial_t u_{1}) \cdot \nabla \bar u_{1} \right) {\rm d} \boldsymbol \rho {\rm d} \tau +2{\rm Re} \int_0^t \int_{\Gamma_{1}}\mu^{-1}(\partial_{r}u_{1}) \partial_{t}\bar{u}_{1}{\rm d}\gamma_{1}{\rm d}t \nonumber \\
 &+2 {\rm Re} \int_0^t \int_{\Omega_{2}} \left(  - \mu^{-1} \nabla u_{2} \cdot (\nabla \partial_t \bar u_{2})
 +\mu^{-1} (\nabla \partial_t u_{2}) \cdot \nabla \bar u_{2} \right) {\rm d} \boldsymbol \rho {\rm d} \tau +2{\rm Re} \int_0^t \int_{\Gamma_{2}}\mu^{-1}(\partial_{r}u_{2}) \partial_{t}\bar{u}_{2}{\rm d}\gamma_{2}{\rm d}t \nonumber \\
 =& 2{\rm Re} \int_0^t\mu^{-1}(\langle\mathscr T u_{1}, \partial_{t}u_{1}\rangle_{\Gamma_{1}}+\langle\mathscr T u_{2}, \partial_{t}u_{1}\rangle_{\Gamma_{1}}){\rm d}t+2{\rm Re} \int_0^t\langle g,\partial_{t}u_{1}\rangle_{\Gamma_{1}}{\rm d}t \nonumber \\
 &+2{\rm Re} \int_0^t\mu^{-1}(\langle\mathscr T u_{2}, \partial_{t}u_{2}\rangle_{\Gamma_{2}}+\langle\mathscr T u_{1}, \partial_{t}u_{2}\rangle_{\Gamma_{2}}){\rm d}t+2{\rm Re} \int_0^t\langle g,\partial_{t}u_{2}\rangle_{\Gamma_{2}}{\rm d}t\nonumber \\
 \leq& 2{\rm Re} \int_0^t(\|g\|_{H^{-1/2}(\Gamma_{1})}\|\partial_{t}u_{1}\|_{H^{1/2}(\Gamma_{1})}+\|g\|_{H^{-1/2}(\Gamma_{2})}\|\partial_{t}u_{2}\|_{H^{1/2}(\Gamma_{2})}){\rm d}t\nonumber \\
 \lesssim&2{\rm Re}\int_0^t(\|g\|_{H^{-1/2}(\Gamma)}\|\partial_{t}u\|_{H^{1}(\Omega)}){\rm d}t \nonumber \nonumber  \\
 \leq&2(\max\limits_{t\in[t,T]}\|\partial_{t}u\|_{H^{1}(\Omega)})\|g\|_{L^{1}(0, T; H^{-1/2}(\Gamma))} .
\end{align}
In order to give the estimate of $\|\partial_t (\nabla u)\|_{L^2 (\Omega)^2}$,
taking the derivative of \eqref{au2} with respect to $t$. We find that $\partial_{t}u$ also satisfies the same equations with $g$ replaced by $\partial_{t}g$.
Hence consider
\begin{align}\label{E2}
e_4 (t)= \|\varepsilon^{1/2} \partial_t^{2} u (\cdot, t)\|^2_{L^2 (\Omega)} +\|\mu^{-1/2} \partial_{t}(\nabla u (\cdot, t))\|^2_{L^2 (\Omega)^2},
\end{align}
and get the estimate
\begin{align}\label{E4}
e_4 (t)
\leq&2{\rm Re} (\int_0^t\int_{\Gamma_{1}}( \partial_{t}g) \partial^{2}_{t}\bar{u}_{1} {\rm d}\gamma_{1}{\rm d}t+\int_0^t\int_{\Gamma_{2}} (\partial_{t}g) \partial^{2}_{t}\bar{u}_{2} {\rm d}\gamma_{2}{\rm d}t) \nonumber \\
=&2{\rm Re} \int_{\Gamma_{1}}(\partial_{t}g)\partial_{t}\bar{u}_{1}|^{t}_{0}{\rm d}\gamma_{1}-2{\rm Re} \int_0^t\int_{\Gamma_{1}}(\partial^{2}_{t}g) \partial_{t}\bar{u}_{1} {\rm d}\gamma_{1}{\rm d}t\nonumber\\
&+2{\rm Re} \int_{\Gamma_{2}}(\partial_{t}g)\partial_{t}\bar{u}_{2}|^{t}_{0}{\rm d}\gamma_{2}-2{\rm Re} \int_0^t\int_{\Gamma_{2}}(\partial^{2}_{t}g) \partial_{t}\bar{u}_{2} {\rm d}\gamma_{2}{\rm d}t \nonumber \\
\leq&2(\max\limits_{t\in[t,T]}\|\partial_{t}u\|_{H^{1}(\Omega)})(\max\limits_{t\in[t,T]}\|\partial_{t}g\|_{H^{-1/2}(\Gamma)}+\|\partial^{2}_{t}g\|_{L^{1}(0, T; H^{-1/2}(\Gamma))}).
\end{align}
Combing the above estimates \eqref{E1}--\eqref{E4}, we can obtain
\begin{align*}
\max\limits_{t\in[t,T]}\|\partial_{t}u\|^{2}_{H^{1}(\Omega)}\lesssim\left(\|g\|_{L^{1}(0, T; H^{-1/2}(\Gamma))}+\max\limits_{t\in[t,T]}\|\partial_{t}g\|_{H^{-1/2}(\Gamma)}+\|\partial^{2}_{t}g\|_{L^{1}(0, T; H^{-1/2}(\Gamma))}\right)\|\partial_{t}u\|_{H^{1}(\Omega)},
\end{align*}
which give the estimate \eqref{aaz} after applying Young's inequality.
\end{proof}
\subsection{A priori estimates of the two cavity problem}

In this section, for the two cavity scattering problem, we derive a priori estimates for the total field with a minimum regularity requirement for the data and an explicit dependence on the time.

The variation problem of \eqref{aab} in time-domain is to find $u_{j}\in H^{1}_{\rm S_{j}}(\Omega_{j}), j=1,2$ for all $t>0$ such that for all $v_j \in  H^{1}_{\rm S_{j}}(\Omega_{j})$
\begin{align}\label{abf}
\int_{\Omega_{j}}\varepsilon_{j}(\partial^{2}_{t}u_{j})\bar{v}_{j}{\rm d}\boldsymbol\rho=-\int_{\Omega_{j}}\mu^{-1}\nabla u_{j}\cdot\nabla\bar{v}_{j}{\rm d}\boldsymbol\rho+\overset{2}{\underset{i=1}{\sum}}\int_{\Gamma_{j}}\mu^{-1}(\mathscr{T}u_{i})\bar{v}_{j}{\rm d}\gamma_{j}+\int_{\Gamma_{j}} g\bar{v}_{j}{\rm d}\gamma_{j}.
\end{align}
This is equivalent to find $u\in H^{1}_{\rm S}(\Omega)$ with $u|_{\Omega_{j}}=u_{j}\in H^{1}_{\rm S_j}(\Omega_{j})$ , such that for all $v\in H^{1}_{\rm S}(\Omega)$ with  $v_{j}=v|_{\Omega_{j}}\in H^{1}_{\rm S_j}(\Omega_{j})$, it holds
\begin{align*}
c_{1}(u, v)=\langle g, v_{1}\rangle_{\Gamma_{1}}+\langle g, v_{2}\rangle_{\Gamma_{2}},
\end{align*}
where the sesquilinear form
\begin{align*}
c_{1}(u, v)&=\overset{2}{\underset{j=1}{\sum}}\left(\int_{\Omega_{j}}\varepsilon_{j}(\partial^{2}_{t}u_{j})\bar{v}_{j}{\rm d}\boldsymbol\rho+\int_{\Omega_{j}}\mu^{-1}\nabla u_{j}\cdot\nabla\bar{v}_{j}{\rm d}\boldsymbol\rho\right)-\overset{2}{\underset{j=1}{\sum}}\overset{2}{\underset{i=1}{\sum}}\int_{\Gamma_{j}}\mu^{-1}(\mathscr{T}u_{i})\bar{v}_{j}{\rm d}\gamma_{j}.
\end{align*}

\begin{theo}\label{abj}
Let $u\in H^{1}_{\rm S}(\Omega)$ be the solution of \eqref{aar}. Given $g\in L^{1}(0, T; H^{-1/2}(\Gamma))$, we have for any $T>0$ that
\begin{align}\label{abk}
  \|u\| _{L^{\infty} (0, T; L^2 (\Omega))}+\|\nabla u\| _{L^{\infty} (0, T; L^2 (\Omega)^2)}
  &\lesssim T \|g\|_{L^1 (0,T; H^{-1/2}(\Gamma))} +\|\partial_{t}g\|_{L^1 (0, T; H^{-1/2}(\Gamma))},
  \end{align}
  and
  \begin{align}\label{abl}
 \|u\|_{L^{2} (0, T; L^2 (\Omega))}+\|\nabla u\| _{L^{2} (0, T; L^2 (\Omega)^2)}
 &\lesssim T^{3/2}\|g\|_{L^1 (0,T; H^{-1/2}(\Gamma))} +T^{1/2} \|\partial_{t}g\|_{L^1 (0, T; H^{-1/2}(\Gamma))}.
\end{align}
\end{theo}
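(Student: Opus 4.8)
The plan is to adapt the proof of Theorem~\ref{ES4} to the coupled two-cavity setting, with the single-cavity positivity Lemma~\ref{op1} replaced throughout by the coupled Lemma~\ref{aag}. The essential new feature is that the boundary term now contains all four cross-pairings $\langle\mathscr T u_i,\psi_{1,j}\rangle_{\Gamma_j}$, $i,j\in\{1,2\}$, reflecting the inter-cavity coupling in \eqref{aae}--\eqref{aaf}, and these must be shown \emph{jointly} to be non-positive; this is the only place where the two-cavity analysis genuinely departs from the single-cavity one.

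First, for fixed $0<\theta<T$ I would introduce, for each cavity $j=1,2$, the auxiliary function $\psi_{1,j}(\boldsymbol\rho,t)=\int_t^\theta u_j(\boldsymbol\rho,\tau)\,{\rm d}\tau$, which satisfies $\psi_{1,j}(\cdot,\theta)=0$ and $\partial_t\psi_{1,j}=-u_j$, together with the integration-by-parts identity \eqref{F2} applied in each cavity. Taking $v_j=\psi_{1,j}$ in the variational formulation \eqref{abf} and summing over $j$, the temporal term integrates (using the homogeneous initial conditions in \eqref{aar}) to $\tfrac12\sum_{j}\|\varepsilon_j^{1/2}u_j(\cdot,\theta)\|_{L^2(\Omega_j)}^2$, while the elliptic term produces $\tfrac12\sum_j\|\mu^{-1/2}\int_0^\theta\nabla u_j\,{\rm d}t\|_{L^2(\Omega_j)^2}^2$. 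This yields the two-cavity analogue of the energy identity \eqref{en1}.

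The key step is to control the coupled boundary term ${\rm Re}\sum_{j=1}^2\sum_{i=1}^2\int_0^\theta\mu^{-1}\langle\mathscr T u_i,\psi_{1,j}\rangle_{\Gamma_j}\,{\rm d}t$. Rewriting each pairing via \eqref{F2} as a time convolution against $\bar u_j$, extending each $u_i$ by zero in time outside $[0,\theta]$ (its spatial extension to the full aperture line being already fixed), and applying the Parseval identity \eqref{PI} together with the fact that $\int_0^t(\cdot)\,{\rm d}\tau$ corresponds to multiplication by $s^{-1}$ in the $s$-domain, the whole four-term sum reduces, writing $\breve{\tilde u}_i$ for the Laplace transform of the space-and-time extension of $u_i$, to
\[
\frac{1}{2\pi}\int_{-\infty}^\infty{\rm Re}\Big(\langle(s\mu)^{-1}\mathscr B\breve{\tilde u}_1,\breve{\tilde u}_1\rangle_{\Gamma_1}+\langle(s\mu)^{-1}\mathscr B\breve{\tilde u}_2,\breve{\tilde u}_1\rangle_{\Gamma_1}+\langle(s\mu)^{-1}\mathscr B\breve{\tilde u}_2,\breve{\tilde u}_2\rangle_{\Gamma_2}+\langle(s\mu)^{-1}\mathscr B\breve{\tilde u}_1,\breve{\tilde u}_2\rangle_{\Gamma_2}\Big)\,{\rm d}s_2,
\]
which is exactly the quantity proven non-positive in Lemma~\ref{aag} (with $u_1,u_2$ in the roles of $u,v$). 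Letting $s_1\to0$ then gives the two-cavity analogue of \eqref{p1}. I expect this to be the main obstacle, since it is where the coupling across the two apertures is absorbed, and it is precisely the reason Lemma~\ref{aag}, rather than Lemma~\ref{op1}, is indispensable.

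Finally, the source term is estimated exactly as in \eqref{p2} using the trace Lemma~\ref{tt10} and the product norm on $H^{-1/2}(\Gamma)$, giving the analogue of \eqref{p4}. I would then differentiate \eqref{aar} in $t$ --- so that $\partial_t u$ solves the same coupled system with $g$ replaced by $\partial_t g$ --- set $\psi_{2,j}=\int_t^\theta\partial_t u_j\,{\rm d}\tau$, and repeat the argument; here the identity $\tfrac12\|\mu^{-1/2}\int_0^\theta\partial_t\nabla u_j\,{\rm d}t\|^2=\tfrac12\|\mu^{-1/2}\nabla u_j(\cdot,\theta)\|^2$ supplies the $\nabla u$ control and the source term is handled by the integration by parts in \eqref{p6}, yielding the analogue of \eqref{p5}. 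Summing the two resulting energy inequalities over the cavities produces the analogue of \eqref{emm}; taking the $L^\infty$-norm in $\theta$ and applying Young's inequality gives \eqref{abk}, while integrating in $\theta\in[0,T]$ and using Cauchy--Schwarz followed by Young's inequality gives \eqref{abl}.
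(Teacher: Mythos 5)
Your proposal is correct and follows essentially the same route as the paper: the same auxiliary functions $\psi_{1}^{(j)}$ and $\psi_{2}^{(j)}$, the same energy identities, the same treatment of the source term via the trace lemma, and the same differentiation-in-$t$ step followed by Young's inequality. In fact your handling of the coupled boundary term is slightly more careful than the paper's, which simply cites Lemma~\ref{aah} at that point, whereas the correct justification is the one you give --- passing through \eqref{F2}, the Parseval identity and the $s^{-1}$ correspondence so as to invoke the frequency-domain Lemma~\ref{aag}, exactly as the paper itself does in the single-cavity case via Lemma~\ref{op1}.
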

\begin{proof}
Define the test function  $\psi_1$ as in the proof of Theorem \ref{ES4}. Denote by  $\psi_{1}^{(1)}:=\psi_{1}|_{\Omega_{1}}$ and $\psi_{1}^{(2)}:=\psi_{1}|_{\Omega_{2}}.$
Taking  the test functions $v_j=\psi_1^{(j)}$ in \eqref{abf}, we can obtain
\begin{align}\label{abm}
 \overset{2}{\underset{j=1}{\sum}}\int_{\Omega_{j}}\varepsilon_{j}(\partial^{2}_{t}u_{j})\bar{\psi}_{1}^{(j)}{\rm d}\boldsymbol\rho
 =&-\overset{2}{\underset{j=1}{\sum}}\int_{\Omega_{j}}\mu^{-1}\nabla u_{j}\cdot\nabla\bar{\psi}_{1}^{(j)}{\rm d}\boldsymbol\rho
 &+\overset{2}{\underset{j=1}{\sum}}\overset{2}{\underset{i=1}{\sum}}\int_{\Gamma_{j}}\mu^{-1}(\mathscr{T}u_{i})\bar{\psi}_{1}^{(j)}{\rm d}\gamma_{j}
 +\overset{2}{\underset{j=1}{\sum}}\int_{\Gamma_{j}} g\bar{\psi}_{1}^{(j)}{\rm d}\gamma_{j}.
\end{align}
It follows from the facts in \eqref{F1}  and the initial conditions  in \eqref{aab} that
\begin{align*}
 {\rm Re}\overset{2}{\underset{j=1}{\sum}}\int_0^{\theta} \int_{\Omega_{j}}\varepsilon_{j}(\partial^{2}_{t}u_{1})\bar{\psi}_{1}^{(j)}{\rm d}\boldsymbol\rho{\rm d} t
 ={\rm Re}\overset{2}{\underset{j=1}{\sum}} \int_{\Omega_{j}} \varepsilon_{j} \left(  (\partial_t u_{j}) \bar {\psi}_{1}^{(j)} \big|_{0}^{\theta} +\frac{1}{2} |u_{j}|^2 \big|_{0} ^{\theta}\right) {\rm d} \boldsymbol  \rho
 = \frac{1}{ 2} \overset{2}{\underset{j=1}{\sum}}\| \varepsilon^{1/2}_{j}  u_{j} (\cdot, \theta)\|^2_{L^2 (\Omega_{j})}.
\end{align*}
Integrating \eqref{abm} from $t=0$ to $t=\theta$ and taking the real parts yields
\begin{align}
&\overset{2}{\underset{j=1}{\sum}}\left(\frac{1}{ 2} \| \varepsilon^{1/2}_{j}  u_{j} (\cdot, \theta)\|^2_{L^2 (\Omega_{j})}
 +{\rm Re} \int_0^{\theta} \int_{\Omega_{j}} \mu^{-1} \nabla u_{j} \cdot \nabla \bar {\psi}_{1}^{(j)} {\rm d} \boldsymbol  \rho  {\rm d}t\right)
\nonumber \\
= &\overset{2}{\underset{j=1}{\sum}}\frac{1}{ 2} \left(\| \varepsilon^{1/2}_{j}  u_{j} (\cdot, \theta)\|^2_{L^2 (\Omega_{j})}
 +\int_{\Omega_{j}} \mu^{-1} \left| \int_0^{\theta} \nabla u_{j} (\cdot, t) {\rm d} t \right|^2{\rm d} \boldsymbol  \rho\right)
 \nonumber\\
= & {\rm Re}\overset{2}{\underset{j=1}{\sum}}\left(\overset{2}{\underset{i=1}{\sum}}\int_0^{\theta} \mu^{-1}\langle \mathscr Tu_{i},~ \psi_{1}^{(j)} \rangle_{\Gamma_{j}}{\rm d} t
 +\int_0^{\theta} \int_{\Gamma_{j}}g \bar \psi_{1}^{(j)}  {\rm d} \gamma_{j}{\rm d} t
 \right).\label {abn}
\end{align}
In the following, we estimate the two terms on the right-hand side of \eqref{abn} separately.
It follows from  Lemma \ref{aah} that
\begin{align}
{\rm Re}\bigg(\int_0^{\theta} \mu^{-1} \langle \mathscr Tu_{1},~ \psi_{1}^{(1)} \rangle_{\Gamma_{1}}
&+\mu^{-1}\langle \mathscr Tu_{2},~ \psi_{1}^{(1)} \rangle_{\Gamma_{1}} {\rm d }t \nonumber\\
&+\int_0^{\theta} \mu^{-1}\langle \mathscr Tu_{2},~ \psi_{1}^{(2)} \rangle_{\Gamma_{2}}+\mu^{-1} \langle \mathscr Tu_{1},~ \psi_{1}^{(2)} \rangle_{\Gamma_{2}} {\rm d }t\bigg) \leq 0.\label{abo}
\end{align}
For $0 \leq t \leq \theta \leq T,$  by the fact in \eqref{F2}, we have
\begin{align}
 {\rm Re}\overset{2}{\underset{j=1}{\sum}}\int_0^{\theta} \int_{\Gamma_{j}}g \bar \psi_{1}^{(j)}  {\rm d} \gamma_{j}{\rm d} t
 &\leq \overset{2}{\underset{j=1}{\sum}} \left( \int_0^\theta \|g (\cdot, t)\|_{H^{-1/2}(\Gamma_{j})} {\rm d} t\right) \left( \int_0^\theta \|u_{j}(\cdot, t)\|_{H^{1/2}(\Gamma_{j})} {\rm d} t\right)\nonumber\\
 &\leq\left( \int_0^\theta \|g (\cdot, t)\|_{H^{-1/2}(\Gamma)} {\rm d} t\right) \left( \int_0^\theta \|u(\cdot, t)\|_{H^{1/2}(\Gamma)} {\rm d} t\right). \label{abp}
\end{align}
Combining \eqref{abo}--\eqref{abp}, we have for any $\theta \in [0, T]$ that
 \begin{align}
 \frac{1}{2} \overset{2}{\underset{j=1}{\sum}}\| \varepsilon^{1/2}_{j}  u_{j} (\cdot, \theta)\|^2_{L^2 (\Omega_{j})}
 \leq \left( \int_0^\theta \|g(\cdot, t)\|_{L^2 (\Omega)} {\rm d} t\right)
 \left( \int_0^\theta \|u(\cdot, t)\|_{L^2 (\Omega)} {\rm d} t\right). \label{abq}
 \end{align}

Taking the derivative of \eqref{aar} with respect to $t$, we know that $\partial_{t}u$ satisfies the same equation with $g$ replaced by $\partial_t g$.
In similar way,  define
\begin{align*}
  \psi_{2} (\boldsymbol \rho, t) =\int_t^{\theta} \partial_t u_{i}(\boldsymbol \rho, \tau) {\rm d} \tau, \quad \boldsymbol \rho  \in \Omega, ~~~ 0 \leq t \leq \theta,~~~i=1,2,
 \end{align*}
 and denote by $\psi_{2}^{(1)}=\psi_{2}|_{\Omega_{1}}$ and $\psi_{2}^{(2)}=\psi_{2}|_{\Omega_{2}}$.
 It follows the  same step as above
\begin{align}
\overset{2}{\underset{j=1}{\sum}}\frac{1}{ 2} &\left(\| \varepsilon^{1/2}_{j} \partial_{t} u_{j} (\cdot, \theta)\|^2_{L^2 (\Omega_{j})}
+\int_{\Omega_{j}} \mu^{-1} \left| \int_0^{\theta} \partial_{t}(\nabla u_{j} (\cdot, t)) {\rm d} t \right|^2{\rm d} \boldsymbol \rho\right) \nonumber\\
 =& {\rm Re}\overset{2}{\underset{j=1}{\sum}}\left(\overset{2}{\underset{i=1}{\sum}}\int_0^{\theta} \mu^{-1} \langle \mathscr T\partial_{t}u_{i},~ \psi_{2}^{(j)} \rangle_{\Gamma_{j}}{\rm d} t
 +\int_0^{\theta} \int_{\Gamma_{j}}g \bar \psi_{2}^{(j)}  {\rm d} \gamma_{j}{\rm d} t
 \right).
\label {aby}
\end{align}
Integrating by parts yields
 \begin{align}
\frac{1}{2} \int_{\Omega_{j}} \mu^{-1} \left| \int_0^{\theta} \partial_{t}(\nabla u_{j} (\cdot, t)) {\rm d} t \right|^2 {\rm d} \boldsymbol \rho
=\frac{1}{2} \|\mu^{-1/2} \nabla u_{j} (\cdot, \theta) \|^2 _{L^{2}(\Omega_{j})^2}, ~~~\quad ~j=1,~2.\label{abu}
\end{align}
The estimate of the  first term on the right-hand side of \eqref{aby} can be discussed similarly  as above,  we only consider the second term.
By the fact in \eqref{F1} and  Lemma \ref{tt10}, we get
\begin{align}
\overset{2}{\underset{j=1}{\sum}}\int_0^{\theta} \int_{\Gamma_{j}}g \bar \psi_{2}^{(j)}  {\rm d} \gamma_{j}{\rm d} t
 &= \overset{2}{\underset{j=1}{\sum}}\left(\int_{\Gamma_{j}}( \int_0^t \partial_{\tau}g (\cdot, \tau) {\rm d} \tau)\bar{u}_{j} (\cdot, t)|^{\theta}_{0} {\rm d} \gamma_{j}-\int_0^{\theta}\int_{\Gamma_{j}}\partial_{t}g (\cdot, t)u_{j}(\cdot, t)  {\rm d} \gamma_{j}{\rm d} t\right) \nonumber\\
 &\lesssim  \int_0^{\theta}\|\partial_{t}g (\cdot, t)\|_{H^{-1/2}(\Gamma)}\|u(\cdot, t)\|_{H^{1}(\Omega)} {\rm d} t.\label{abv}
\end{align}
Substituting \eqref{abu}--\eqref{abv} into \eqref{aby}, we have for any $\theta \in [0, T]$ that
 \begin{align}
 \frac{1}{2}\sum\limits_{j=1}^2 \left(\|\varepsilon_j^{1/2} \partial_{t}u_j (\cdot, \theta)\|^2_{L^2 (\Omega)}+ \|\mu^{-1/2} \nabla u_j (\cdot, \theta) \|^2 _{L^{2}(\Omega)^2}\right)
 &\lesssim  \int_0^{\theta}\|\partial_{t}g (\cdot, t)\|_{H^{-1/2}(\Gamma)}\|u(\cdot, t)\|_{H^{1}(\Omega)} {\rm d} t. \label{abw}
 \end{align}
Combing the estimates \eqref{abq} and \eqref{abw}, we obtain
\begin{align}
\|u(\cdot, \theta)\|^{2}_{L^{2}(\Omega)}+\|\nabla u\|^{2}_{L^{2}(\Omega)^{2}}
 &\lesssim \left(\int^{\theta}_{0}\|g(\cdot, t)\|_{H^{-1/2}(\Gamma)}{\rm d}t \right)\left(\int^{\theta}_{0}\|u(\cdot, t)\|_{H^{1}(\Omega)}{\rm d}t \right)\nonumber\\
 &+\int^{\theta}_{0}\|\partial_{t}g(\cdot, t)\|_{H^{-1/2}(\Gamma)}\|u(\cdot, t)\|_{H^{1}(\Omega)}{\rm d}t.\label{abz}
\end{align}
Taking the $L^{\infty}$-norm with respect to $\theta$ on both sides of \eqref{abz} yields
\begin{align*}
 \|u\|^2_{L^{\infty} (0, T; L^2 (\Omega))}+ \|\nabla u\|^{2}_{L^{\infty}(0, T; L^2 (\Omega)^{2})}
 &\lesssim T \|g\|_{L^1 (0, T; H^{-1/2 } (\Gamma))}\|u\|_{L^{\infty} (0, T; H^1 (\Omega))} \nonumber\\
 &+\|\partial_{t}g\|_{L^1 (0, T; H^{-1/2 } (\Gamma))}\|u\|_{L^{\infty} (0, T; H^1 (\Omega))},
\end{align*}
which gives the estimate \eqref{abk} after applying the Young's inequality.

Integrating \eqref{abz} with respect to $\theta$ from $0$ to $T$ and using the Cauchy--Schwarz inequality, we obtain
\begin{align*}
 \|u\|^2_{L^{2} (0, T; L^2 (\Omega))}+ \|\nabla u\|^{2}_{L^{2}(0, T; L^2 (\Omega)^{2})}
 &\lesssim T^{3/2} \|g\|_{L^1 (0, T; H^{-1/2 } (\Gamma))}\|u\|_{L^{2} (0, T; H^1 (\Omega))} \nonumber\\
 &+T^{1/2}\|\partial_{t}g\|_{L^1 (0, T; H^{-1/2 } (\Gamma))}\|u\|_{L^{2} (0, T; H^1 (\Omega))},
\end{align*}
which implies the estimate \eqref{abl} by using  Young's inequality again.
\end{proof}
\section{multiple cavities scattering problem}\label{FRP}
In this section, we generalize the model problem and techniques to the case of multiple cavity scattering.
The proofs and results are analogous to those for the two cavity scattering problem.
For completes, we briefly state the results and give the results.
\subsection {Problem formulation}
\begin{figure}
\centering
\includegraphics[width=0.45\textwidth]{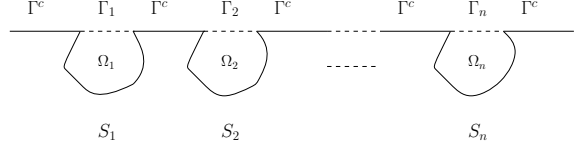}
\caption{The problem geometry of the multiple cavity.}
\label{fig3}
\end{figure}

As shown in the Figure \ref{fig3},  the $n$-multiple open cavities $\Omega_{1}, \Omega_{2},\cdots, \Omega_{n}$ are placed on a perfectly conducting ground plane $\Gamma^{\rm c}$,
 with  apertures $\Gamma_{1}, \Gamma_{2}, \cdots, \Gamma_{n}$ and  walls $S_{1}, S_{2}, \cdots, S_{n}$. Above the flat surface $\{y=0\}=\Gamma^{\rm c}\cup \Gamma_{1}\cup\Gamma_{2}\cup\cdots\cup\Gamma_{n}$,
the medium is assumed to be homogeneous with the positive dielectric permittivity $\varepsilon_{0}$ and magnetic permeability $\mu_{0}$.
The medium inside the cavity $\Omega_{i}$ is inhomogenous with the variable dielectric permittivity $\varepsilon_{j}(x, y)$ and the same magnetic permeability $\mu(x,y)$. Assume further that $\varepsilon_{j}(x, y), \mu(x, y)\in L^{\infty}(\Omega_{j})$
are positive for $j=1, 2,\cdots, n$, and satisfy
\begin{align*}
0<\varepsilon_{j, \min}\leq\varepsilon_{j}\leq\varepsilon_{j, \max}<\infty, \quad 0<\mu_{\min}\leq\mu\leq\mu_{\max}<\infty.
\end{align*}
Consider the similar model of the wave equation for the total field:
\begin{align}\label{md}
 \begin{cases}
  \varepsilon \partial_t^2 u - \nabla \cdot \left(  \mu^{-1} \nabla u\right)=0 \quad  &\text {in} ~~~\Omega^{\rm e}\cup\Omega_1\cup\cdots\cup \Omega_n,~~~t>0,\\
  u \big|_{t=0}=0, \quad \partial_t u \big|_{t=0} =0\quad & \text {in}~~~\Omega^{\rm e}\cup\Omega_1\cup\cdots\cup \Omega_n,\\
  u=0\quad &\text{on} ~~ \Gamma^{\rm c}\cup S_1\cup \cdots  \cup S_n  , ~~t>0.
\end{cases}
\end{align}
The total field $u$ is assumed to consist of the incident field $u^{\rm inc}$, the reflected field $u^{\rm r}$, and the scattered field $u^{\rm sc}$, where the
scattered field is required to satisfy the radiation condition \eqref{SR}.

\subsubsection{Transparent boundary condition}
 As the two cavity situation,
to reduce the scattering problem from the open domain $\Omega^{\rm e}\cup\Omega_{1}\cup\cdot\cdot\cdot\cup\Omega_{n}$
into the bounded domain, we need to derive transparent boundary conditions on the aperture $\Gamma_{j}, ~j=1,...n$ . Reformulating the multiple cavity scattering
problem \eqref{md} into $n$ single cavity scattering problem with the coupled boundary conditions
\begin{align}\label{aca}
 \begin{cases}
  \varepsilon_{j} \partial_t^2 u_{j} - \nabla \cdot \left(  \mu^{-1}_{j} \nabla u_{j}\right)=0 \quad  &\text {in} ~~~\Omega_{j},~~~t>0,\\
  u_{j} \big|_{t=0}=0, \quad \partial_t u_{j} \big|_{t=0} =0\quad & \text {in}~~~\Omega_{j},\\
  u_{j}=0  \quad & \text {on}~~~S_{j}, ~~~~t>0,\\
  \partial_{\boldsymbol n} u_{j} = \mathscr T u_{j}+g \quad & \text {on}~~~\Gamma_{j},~~~~t>0,
\end{cases}
\end{align}
where the transparent boundary operator  $\mathscr T$ will be given later and $u_{j}=u|_{\Omega_{j}},~j=1,...,n$.

Due to the homogeneous medium in the upper half space $\mathbb R^{2}_{+}$ and the radiation condition \eqref{SR}, the scattered field $u^{\rm sc}$ still satisfies the same ordinary differential equation \eqref{abc} after taking the Laplace transform with respect to $t$.
Thus the total field $\breve{u}$ and $u$  satisfy the   transparent boundary conditions in frequency domain and time-domain, respectively:
\begin{align}\label{acd}
\partial_{\mathbf{n}}\breve{u}=\mathscr B \breve{u} +\breve{g},\quad  \partial_{\mathbf{n}}u=\mathscr T u +g  ~\quad &\text {on}~~~\Gamma^{\rm c}\cup\Gamma_{1}\cup\cdot\cdot\cdot\cup\Gamma_{n}.
\end{align}
Next, we derive the transparent boundary condition for each $u_j$ on $\Gamma_j$.

For  $u_{j}(x,0), j=1,..., n$ defined on $\Gamma_j$, we extend them to the  whole $x$-axis by
\begin{align*}
\tilde{u}_{j}(x,0)=
 \begin{cases}
  u_{j}(x,0) ~~\quad & {\rm for} ~x\in \Gamma_{j},\\
  0          ~~\quad & {\rm for} ~x\in \mathbb R\backslash\Gamma_{j}.
\end{cases}
\end{align*}
For the total field $u(x,0)$, define its extension to the whole $x$-axis by
\begin{align*}
\tilde{u}(x,0)=
 \begin{cases}
  u_{j}(x,0) ~~\quad & {\rm for} ~x\in \Gamma_{j},\\
  0          ~~\quad & {\rm for} ~x\in \Gamma^{\rm c}.
\end{cases}
\end{align*}
By the definitions above, it is obvious that
\begin{align*}
\tilde{u}=\overset{n}{\underset{j=1}{\sum}}\tilde{u}_{j} ~~\quad {\rm on}~ \Gamma^{\rm c}\cup\Gamma_{1}\cup\cdot\cdot\cdot\cup\Gamma_{n}.
\end{align*}
The  transparent boundary condition \eqref{acd} can be written as
\begin{align*}
\partial_{\mathbf{n}}{\breve{ \tilde u}}=\mathscr B {\breve{ \tilde u}} +\breve{g}  ~\quad\text {on}~~~\Gamma^{\rm c}\cup\Gamma_{1}\cup\cdot\cdot\cdot\cup\Gamma_{n};\quad\partial_{\mathbf{n}}\tilde{u}=\mathscr T \tilde{u} +g ~\quad\text {on}~~~\Gamma^{\rm c}\cup\Gamma_{1}\cup\cdot\cdot\cdot\cup\Gamma_{n},
\end{align*}
which leads to the transparent boundary conditions for $u_{j}$ on $\Gamma_j$:
\begin{align}\label{acg}
\partial_{\mathbf{n}}\breve{u}_{j}=\mathscr B \breve{\tilde{u}}_{j}+\overset{n}{\underset{\underset{i\neq j}{i=1}}{\sum}}\mathscr B \breve{\tilde{u}}_{i}+\breve{g} ~~\quad {\rm on}~ \Gamma_{j};\quad\partial_{\mathbf{n}}u_{j}=\mathscr T \tilde{u}_{j}+\overset{n}{\underset{\underset{i\neq j}{i=1}}{\sum}}\mathscr T \tilde{u}_{i}+g ~~\quad {\rm on}~ \Gamma_{j},t>0.
\end{align}
From \eqref{acg}, it is obvious that the boundary conditions for $u_{1},...,u_{n}$ are coupled with each other,
which is the major difference between the single cavity scattering problem and the multiple cavity scattering problem.

The following lemma is analogous to  Lemma \ref{aag}, which  is  used  for analysis the uniqueness and existence for  the multiple cavity scattering problem.
\begin{lemm}\label{ach}
 It holds that
 \begin{align*}
  -{\rm Re}\overset{n}{\underset{j=1}{\sum}} \overset{n}{\underset{i=1}{\sum}}\langle (s\mu)^{-1} \mathscr B u_{i}, u_{j} \rangle_{\Gamma_{j}}£¬ \geq 0, \quad u_{j}\in H^{1/2} (\mathbb R), ~j=1,2...,n.
 \end{align*}

\end{lemm}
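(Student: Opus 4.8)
The plan is to follow the proof of Lemma~\ref{aag} line by line, the only new feature being that there are now $n^2$ cross terms instead of four. The decisive observation is the same: after passing to the Fourier side the entire double sum collapses into a single quadratic form weighted by $\beta(\xi)$, to which the scalar sign estimate behind Lemma~\ref{op1} applies.

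First I would use the definition \eqref{DF} of $\mathscr B$ together with the Parseval identity $\langle w, z\rangle = \int_{\mathbb R}\hat w\,\overline{\hat z}\,{\rm d}\xi$ to rewrite each boundary pairing. Regarding each $u_j$ as supported on the aperture $\Gamma_j$ (its zero extension), and using that the apertures are pairwise disjoint, every pairing on $\Gamma_j$ becomes an integral over the whole line,
\[
\langle (s\mu)^{-1}\mathscr B u_i, u_j\rangle_{\Gamma_j}
= \int_{\mathbb R}(s\mu)^{-1}\beta(\xi)\,\hat u_i(\xi)\,\overline{\hat u_j(\xi)}\,{\rm d}\xi .
\]

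Summing over $i$ and $j$, the integrands factor into a perfect square,
\[
\sum_{j=1}^n\sum_{i=1}^n \beta(\xi)\,\hat u_i\,\overline{\hat u_j}
= \beta(\xi)\,\Big|\sum_{i=1}^n \hat u_i(\xi)\Big|^2 ,
\]
so the whole expression equals $\langle (s\mu)^{-1}\mathscr B U, U\rangle$ with $U=\sum_{i=1}^n u_i\in H^{1/2}(\mathbb R)$, and the claim follows at once from Lemma~\ref{op1} applied to $U$. To keep the argument self-contained I would instead reproduce the sign computation directly: writing $\beta(\xi)=\varsigma+{\rm i}\varrho$ with $\varsigma<0$ and invoking \eqref{ht}--\eqref{hp}, one has ${\rm Re}\big[(s\mu)^{-1}\beta\big]=\frac{1}{\mu|s|^2}\frac{s_1}{\varsigma}\big(\varsigma^2+c^{-2}s_2^2\big)\le 0$ since $s_1>0$ and $\varsigma<0$. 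Hence
\[
{\rm Re}\sum_{j=1}^n\sum_{i=1}^n \langle (s\mu)^{-1}\mathscr B u_i, u_j\rangle_{\Gamma_j}
= \int_{\mathbb R}\frac{1}{\mu|s|^2}\frac{s_1}{\varsigma}\big(\varsigma^2+c^{-2}s_2^2\big)\,\Big|\sum_{i=1}^n\hat u_i\Big|^2\,{\rm d}\xi \le 0,
\]
which is exactly the asserted inequality.

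I do not expect any genuine analytic obstacle; the proof is essentially combinatorial bookkeeping. The one point that must be handled with care is the step that turns the $\Gamma_j$-pairings into integrals over $\mathbb R$: it relies on the disjointness of $\Gamma_1,\dots,\Gamma_n$ and on each $u_j$ being (the zero extension of) a function living on $\Gamma_j$, which is precisely what guarantees that the $n^2$ terms reassemble into the nonnegative weight $\big|\sum_i \hat u_i\big|^2$ rather than a genuinely indefinite form.
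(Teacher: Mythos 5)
Your proposal is correct and follows essentially the same route as the paper: pass to the Fourier side via the definition of $\mathscr B$ and Parseval, observe that the double sum collapses to the nonnegative weight $\bigl|\sum_{i}\hat u_i\bigr|^2$, and conclude from ${\rm Re}\bigl[(s\mu)^{-1}\beta(\xi)\bigr]=\frac{1}{\mu|s|^2}\frac{s_1}{\varsigma}\bigl(\varsigma^2+c^{-2}s_2^2\bigr)\le 0$ exactly as in Lemmas \ref{op1} and \ref{aag}. If anything your write-up is cleaner than the paper's (which inserts superfluous ``$\leq$'' and ``$\lesssim$'' steps where an identity holds), and your explicit remark about the disjoint supports of the $\tilde u_j$ makes precise a point the paper leaves implicit.
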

\begin{proof}
By definition \eqref{acg}, recalling $\beta^{2}(\xi)=\xi^{2}+c^{-2}s^{2}$,  it gives
\begin{align*}
  {\rm Re}\left(\overset{n}{\underset{j=1}{\sum}} \overset{n}{\underset{i=1}{\sum}} \langle (s\mu)^{-1} \mathscr B u_{i}, u_{j} \rangle_{\Gamma_{j}}\right)
  =&\overset{n}{\underset{j=1}{\sum}}\overset{n}{\underset{i=1}{\sum}} \int_{\mathbb R}\frac{1}{\mu|s|^{2}}\frac{s_{1}}{\varsigma}(\zeta^{2}+c^{2}s_{2}^{2}) u_{i}\bar{u}_{j}{\rm d}\xi\\
  \leq&\overset{n}{\underset{j=1}{\sum}}\overset{n}{\underset{i=1}{\sum}} \int_{\mathbb R}\frac{1}{\mu|s|^{2}}\frac{s_{1}}{\varsigma}(\zeta^{2}+c^{2}s_{2}^{2}) u_{i}\bar{u}_{j}{\rm d}\xi\\
  \lesssim&\int_{\mathbb R}\frac{1}{\mu|s|^{2}}\frac{s_{1}}{\varsigma}(\zeta^{2}+c^{2}s_{2}^{2}) \left|\overset{n}{\underset{j=1}{\sum}}u_{j}\right|^{2}{\rm d}\xi\leq0.
\end{align*}
\end{proof}
\begin{lemm}\label{aci}
 For any $u_{j}(\cdot , t)\in L^2 (0, T; H^{1/2} (\Omega_j) )$ with initial value $u_j(\cdot, 0)=0$, denote their zero extension on $L^2 (0, T; H^{1/2} (\mathbb R))$ by $\tilde u_{j} (\cdot, t), j=1,\cdots,n$. Then,
  it holds
 \begin{align*}
  -{\rm Re}\int_0^T \overset{n}{\underset{j=1}{\sum}} \overset{n}{\underset{i=1}{\sum}}\langle \mathscr T \tilde  u_{i} , \partial_t \tilde u_{j}  \rangle_{\Gamma_{j}} {\rm d} t \geq 0.
 \end{align*}

\end{lemm}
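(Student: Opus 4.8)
The plan is to closely mirror the proof of Lemma~\ref{aah}, replacing the four explicit cross terms there by the full double sum over $i,j=1,\dots,n$ and invoking the frequency-domain inequality of Lemma~\ref{ach} in place of Lemma~\ref{aag}. First I would introduce a temporal extension: for each $j$ let $\tilde{\tilde u}_j(\cdot,t)$ be the extension of $\tilde u_j(\cdot,t)$ to all of $\mathbb R$ vanishing outside $[0,T]$, and write $\breve{\tilde{\tilde u}}_j=\mathscr L(\tilde{\tilde u}_j)$ for its Laplace transform in $t$. The hypothesis $u_j(\cdot,0)=0$ guarantees that the extension carries no jump at the origin, so that $\mathscr L(\partial_t\tilde{\tilde u}_j)=s\,\breve{\tilde{\tilde u}}_j$ with no boundary contribution; this is exactly the point at which the vanishing initial data is used.

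Next I would insert the convergence factor $e^{-2s_1 t}$ for a fixed $s_1>0$ and transfer each summand to the frequency side. Using $\mathscr T=\mathscr L^{-1}\circ\mathscr B\circ\mathscr L$ together with the Parseval identity~\eqref{PI} applied on each aperture $\Gamma_j$, every term $\langle\mathscr T\tilde u_i,\partial_t\tilde u_j\rangle_{\Gamma_j}$ becomes a pairing of the form $\frac{1}{2\pi}\int_{-\infty}^{\infty}\langle\mathscr B\breve{\tilde{\tilde u}}_i,\,s\breve{\tilde{\tilde u}}_j\rangle_{\Gamma_j}\,{\rm d}s_2$. Summing over all $i$ and $j$ and factoring out $|s|^2\mu$ exactly as in Lemma~\ref{op2} gives
\begin{align*}
-{\rm Re}\int_0^T e^{-2s_1 t}\sum_{j=1}^n\sum_{i=1}^n\langle\mathscr T\tilde u_i,\partial_t\tilde u_j\rangle_{\Gamma_j}\,{\rm d}t
=-\frac{1}{2\pi}\int_{-\infty}^{\infty}|s|^2\mu\,{\rm Re}\sum_{j=1}^n\sum_{i=1}^n\langle (s\mu)^{-1}\mathscr B\breve{\tilde{\tilde u}}_i,\breve{\tilde{\tilde u}}_j\rangle_{\Gamma_j}\,{\rm d}s_2,
\end{align*}
so that the inner double sum is put in precisely the quadratic form treated by Lemma~\ref{ach}.

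Lemma~\ref{ach} then shows that $-{\rm Re}\sum_{j}\sum_{i}\langle (s\mu)^{-1}\mathscr B\breve{\tilde{\tilde u}}_i,\breve{\tilde{\tilde u}}_j\rangle_{\Gamma_j}\ge 0$ for each $s_2$; since the weight $|s|^2\mu$ is positive, the whole right-hand side is nonnegative for every $s_1>0$, and hence so is the left-hand side. Letting $s_1\to0$ removes the exponential factor $e^{-2s_1 t}$ and yields the claimed inequality. I expect the only delicate point to be the bookkeeping through the Parseval step: one must keep each cross term $i\neq j$ paired as $\mathscr B\breve{\tilde{\tilde u}}_i$ against $s\breve{\tilde{\tilde u}}_j$ on the correct aperture $\Gamma_j$, which is exactly the index structure for which Lemma~\ref{ach} is formulated. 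The positive factor $\mu$ is harmless for the sign, just as in the computation behind Lemma~\ref{op1}, so no genuinely new obstacle arises beyond the single- and two-cavity cases; the argument is a direct combinatorial extension of Lemma~\ref{aah}.
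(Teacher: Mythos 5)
Your proposal is correct and follows essentially the same route as the paper: extend each $\tilde u_j$ in time to vanish outside $[0,T]$, pass to the frequency domain via the Parseval identity \eqref{PI}, reduce the double sum to the quadratic form of Lemma \ref{ach} (the paper simply re-derives that computation inline with $\beta(\xi)=\zeta+{\rm i}\varrho$ rather than citing the lemma's conclusion), and let $s_1\to 0$. Your explicit remark on where the vanishing initial data enters (so that $\mathscr L(\partial_t\tilde{\tilde u}_j)=s\,\breve{\tilde{\tilde u}}_j$) is a point the paper leaves implicit, but it is the same argument.
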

\begin{proof}
Extending $\tilde u_j (\cdot, t)$  with respect to $t$ in $\mathbb R$ such that
 $\tilde u_j (\cdot, t)= 0$ outside the interval $[0, T]$, for convenience, we still
  denote it by $\tilde u_j (\cdot, t)$.
Let $\breve {\tilde u}_j =\mathscr L (\tilde u_j)$ be the Laplace of $\tilde u_j$. By the Parseval identity \eqref{PI} and Lemma \ref{ach}, we get
 \begin{align*}
  -{\rm Re} \int_0^{T} e^{- 2 s_1 t} \overset{n}{\underset{j=1}{\sum}} \overset{n}{\underset{i=1}{\sum}} \langle \mathscr T u_{i} , \partial_t u_{j}  \rangle_{\Gamma_{j}} {\rm d} t
  =& - {\rm Re}  \overset{n}{\underset{j=1}{\sum}} \overset{n}{\underset{i=1}{\sum}} \int_{\Gamma_{j}} \int_0^{\infty} e^{- 2 s_1 t} (\mathscr T \tilde u_{i}) \partial_t \bar {\tilde u}_{j} {\rm d} t{\rm d} \gamma\\
  =&-\frac{1}{2 \pi} \int_{\infty}^{\infty}  {\rm Re}\overset{n}{\underset{j=1}{\sum}} \overset{n}{\underset{i=1}{\sum}} \langle \mathscr B \breve {\tilde u}_{i}, s \breve{\tilde u}_{j} \rangle_{\Gamma_{j}}{\rm d} s_2 \\
  =&-\frac {1}{2 \pi} \int_{- \infty}^{\infty} |s|^2 {\rm Re}\overset{n}{\underset{j=1}{\sum}} \overset{n}{\underset{i=1}{\sum}} \int_{\mathbb{R}}s^{-1}\beta(\xi)\breve {\tilde u}_{i}\bar{\breve {\tilde u}}_{j}{\rm d}\xi{\rm d} s_2\\
  =&-\frac {1}{2 \pi} \int_{- \infty}^{\infty} \int_{\mathbb{R}}\frac{s_{1}}{\zeta}(\zeta^{2}+c^{-2}s_{2}^{2})|\overset{n}{\underset{j=1}{\sum}}\breve {\tilde u}_{j}|^{2}{\rm d}\xi{\rm d} s_2 \geq 0,
 \end{align*}
which  completes the proof
after taking $s_1 \rightarrow 0 $.
\end{proof}

\subsection{The reduced multiple cavity scattering problem}
We now present the  well-posedness and stability  of the reduced problem.
For simplicity, we shall use the same notation as those
adopted in Section \ref{TSP} for the two cavity scattering problem.
Denote by $\Omega=\Omega_{1}\cup\cdots\cup\Omega_{n}, \Gamma=\Gamma_{1}\cup\cdots\cup\Gamma_{n}$, and $S=S_{1}\cup\cdots\cup S_{n}$.
Define the  trace functional space
\begin{align*}
\tilde{H}^{1/2}(\Gamma)=\tilde{H}^{1/2}(\Gamma_{1})\times\cdots\times\tilde{H}^{1/2}(\Gamma_{n}),
\end{align*}
whose norm is characterized by
$
\|u\|^{2}_{\tilde{H}^{1/2}(\Gamma)}=\overset{n}{\underset{j=1}{\sum}}\|u_{j}\|^{2}_{\tilde{H}^{1/2}(\Gamma_{j})} .
$
Denote  by
\[H^{-1/2}(\Gamma)=H^{-1/2}(\Gamma_{1})\times\cdots\times H^{-1/2}(\Gamma_{n}),\]
 which is the dual space of $\tilde{H}^{1/2}(\Gamma)$. The norm on the space $H^{-1/2}(\Gamma)$ is characterized by
\begin{align*}
\|u\|^{2}_{H^{-1/2}(\Gamma)}=\overset{n}{\underset{j=1}{\sum}}\|u_{j}\|^{2}_{H^{-1/2}(\Gamma_{j})}.
\end{align*}
Denote the space
\begin{align*}
H^{1}_{\rm S}(\Omega)=H^{1}_{\rm S_{1}}(\Omega_{1})\times\cdots\times H^{1}_{\rm S_{n}}(\Omega_{n}),
\end{align*}
which is a Hilbert space with norm characterized by
$
\|u\|^{2}_{H_{\rm S}^{1}(\Omega)}=\overset{n}{\underset{j=1}{\sum}}\|u_{j}\|^{2}_{H_{\rm S_j}^{1}(\Omega_{j})}.
$
\subsubsection{well-posedness in the $s$-domain}
Taking the Laplace transform of \eqref{aca}, we can get for $j=1, \cdots, n,$
\begin{align}\label{sdm}
\begin{cases}
\varepsilon_j s \breve u_j-\nabla\cdot \left(s^{-1} \mu^{-1} \nabla \breve u_j \right)=0 \quad &\text{in}~~\Omega_j,\\
\breve u_j=0 \quad &\text{on}~~S_j,\\
\partial_{\boldsymbol n} \breve u_j=\mathscr B \breve u_j+\breve g \quad &\text{on}~~\Gamma_j.
\end{cases}
\end{align}
 Multiplying the complex conjugate of test function $v_{j}\in H^{1}_{\rm S_{j}}(\Omega_{j})$ on both sides of the first equality of the \eqref{sdm}
  and integrating over $\Omega_{j}$, we have
\begin{align*}
\int_{\Omega_{j}}(s\mu)^{-1}\nabla\breve{u}_{j}\nabla\bar{v}_{j}+s\varepsilon_{j}\breve{u}_{j}\bar{v}_{j}{\rm d} \boldsymbol \rho-\overset{n}{\underset{i=1}{\sum}}\langle(s\mu)^{-1}\mathscr B \tilde{\breve{u}}_{i}, \tilde{v}_{j}\rangle_{\Gamma_{j}}=\langle\breve{g}, v_{j}\rangle_{\Gamma_{j}}.
\end{align*}
The variational formulation for the multiple cavity scattering problem \eqref{sdm}: find $ \breve u\in H^{1}_{\rm S}(\Omega)$ with  $\breve u|_{\Omega_{j}}=u_{j}\in H^{1}_{\rm S_j}(\Omega_{j})$,
such that for all $v\in H^{1}_{\rm S}(\Omega)$ with $v|_{\Omega_{j}}=v_{j}\in H^{1}_{\rm S_j}(\Omega_{j})$, it holds
\begin{align}\label{ack}
a_3(\breve{u}, v)=\overset{n}{\underset{j=1}{\sum}}\langle\breve{g}, v_{j}\rangle_{\Gamma_{j}},
\end{align}
where the sesquilinear form
\begin{align*}
a_3(\breve{u}, v)&=\overset{n}{\underset{j=1}{\sum}}\int_{\Omega_{j}}((s\mu)^{-1}\nabla\breve{u}_{j}\nabla\bar{v}_{j}+s\varepsilon_{j}\breve{u}_{j}\bar{v}_{j}){\rm d} \boldsymbol \rho-\overset{n}{\underset{j=1}{\sum}}\overset{n}{\underset{i=1}{\sum}}\langle(s\mu)^{-1}\mathscr B \tilde{\breve{u}}_{i}, \tilde{v}_{j}\rangle_{\Gamma_{j}}.
\end{align*}

\begin{theo}\label{acm}
 The variational problem \eqref{ack} has a unique solution $\breve{u} \in H^1_{\rm S} (\Omega)$ which satisfies
 \begin{align}\label{acn}
  \|\nabla \breve{u}\|_{L^2 (\Omega)^2} +\|s \breve{u}\|_{L^2 (\Omega)} \lesssim s_1^{-1} \| s\breve{g}\|_{H^{-1/2} (\Gamma)}.
 \end{align}
\end{theo}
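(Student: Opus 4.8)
The plan is to establish well-posedness by applying the Lax--Milgram lemma to the variational problem \eqref{ack}, following exactly the pattern of Theorems \ref{at1} and \ref{aam}; the whole argument reduces to verifying the continuity and the coercivity of the sesquilinear form $a_3$, after which the energy estimate \eqref{acn} is read off by combining the coercivity bound with the right-hand side of \eqref{ack}.

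First I would prove continuity. Writing $a_3$ as the sum over $j$ of the volume integrals plus the coupling double sum $-\sum_{j=1}^{n}\sum_{i=1}^{n}\langle(s\mu)^{-1}\mathscr B\tilde{\breve u}_i,\tilde v_j\rangle_{\Gamma_j}$, I would bound each volume integral by the Cauchy--Schwarz inequality (with the factor $|s|^{-1}\mu_{\min}^{-1}$ on the gradient term and $|s|\varepsilon_{\max}$ on the zeroth-order term), and each boundary pairing by the continuity of $\mathscr B\colon H^{1/2}(\mathbb R)\to H^{-1/2}(\mathbb R)$ from Lemma \ref{ct1} together with the trace theorem, Lemma \ref{tt10}. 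Summing over the finitely many indices $i,j$ and using $\|\cdot\|_{H^{1/2}(\Gamma_j)}\lesssim\|\cdot\|_{H^1(\Omega_j)}$ then yields $|a_3(\breve u,v)|\lesssim\|\breve u\|_{H^1(\Omega)}\|v\|_{H^1(\Omega)}$, the implicit constant depending on $s$ but finite for each fixed $s$ with $s_1>0$.

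The decisive step is the coercivity, and it is here that the multiple-cavity structure enters. Setting $v=\breve u$ and taking the real part, the volume terms are bounded below by $C_1\frac{s_1}{|s|^2}\bigl(\|\nabla\breve u\|_{L^2(\Omega)^2}^2+\|s\breve u\|_{L^2(\Omega)}^2\bigr)$ with $C_1=\min\{\mu_{\max}^{-1},1\}$, exactly as in \eqref{rp1}. The coupled boundary double sum, after taking $-{\rm Re}$ and extracting $(s\mu)^{-1}$, is precisely the quantity appearing in Lemma \ref{ach}, which guarantees it is nonnegative; hence the coupling terms only help and may be discarded from the lower bound, giving ${\rm Re}\,a_3(\breve u,\breve u)\geq C_1\frac{s_1}{|s|^2}\bigl(\|\nabla\breve u\|_{L^2(\Omega)^2}^2+\|s\breve u\|_{L^2(\Omega)}^2\bigr)$. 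This is the main obstacle in spirit: were the off-diagonal couplings of the wrong sign, coercivity could fail, and it is Lemma \ref{ach}---the generalization of Lemmas \ref{op1} and \ref{aag}---that controls the sign of the entire $n\times n$ block by reducing it to $\bigl|\sum_j\breve u_j\bigr|^2$.

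Finally, Lax--Milgram furnishes the unique solution $\breve u\in H^1_{\rm S}(\Omega)$. For the bound, I would estimate the right-hand side of \eqref{ack} by $|a_3(\breve u,\breve u)|\leq|s|^{-1}\|\breve g\|_{H^{-1/2}(\Gamma)}\|s\breve u\|_{L^2(\Omega)}$, as in \eqref{rp2}, combine it with the coercivity inequality to obtain $\|\nabla\breve u\|_{L^2(\Omega)^2}^2+\|s\breve u\|_{L^2(\Omega)}^2\lesssim s_1^{-1}\|s\breve g\|_{H^{-1/2}(\Gamma)}\|s\breve u\|_{L^2(\Omega)}$, and then absorb the factor $\|s\breve u\|_{L^2(\Omega)}$ by the Cauchy--Schwarz inequality to arrive at \eqref{acn}. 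Since Lemma \ref{ach} is already in hand, the passage from the two-cavity Theorem \ref{aam} to general $n$ is otherwise routine, the only change being that all single and double sums now run up to $n$.
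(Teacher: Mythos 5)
Your proposal is correct and follows essentially the same route as the paper: continuity of $a_3$ via the Cauchy--Schwarz inequality, Lemma \ref{ct1} and the trace theorem (Lemma \ref{tt10}); coercivity by taking the real part with $v=\breve u$ and discarding the coupled boundary double sum, whose sign is controlled by Lemma \ref{ach}; then Lax--Milgram and the absorption argument to obtain \eqref{acn}. The only cosmetic difference is that the paper invokes Young's inequality at the final absorption step where you cite Cauchy--Schwarz, but the manipulation is the same.
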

\begin{proof}
The continuity of the  sesquilinear form $a_3(\breve u, v)$ follows
 \begin{align*}
  |a_{3} (\breve{u}, v)|
  \leq &\frac{1}{|s| \mu}( \overset{n}{\underset{j=1}{\sum}}\|\nabla \breve{u}_{j}\|_{L^2(\Omega_{j})^2} \|\nabla v_{j}\|_{L^2 (\Omega_{j})^2})
  +|s|\varepsilon_{\max} (\overset{n}{\underset{j=1}{\sum}}\|\breve{u}_{j}\|_{L^2 (\Omega_{j})}\|v_{j}\|_{L^2 (\Omega_{j})})\\
  &+\frac{1}{|s|\mu}(\overset{n}{\underset{j=1}{\sum}}\overset{n}{\underset {i=1}{\sum}}\|\mathscr B \breve{u}_{i}\|_{H^{-1/2} (\Gamma_{i})} \|v_{j}\|_{H^{1/2}(\Gamma_{j})})\\
  \lesssim  & \|\nabla \breve{u}\|_{L^2(\Omega)^2} \|\nabla v\|_{L^2 (\Omega)^2}
  +\|\breve{u}\|_{L^2 (\Omega)}\|v\|_{L^2 (\Omega)}+  \|\breve{u}\|_{H^{1/2} (\Gamma)} \|v\|_{H^{1/2} (\Gamma)}\\
  \lesssim  &\|\breve{u}\|_{H^1(\Omega)} \|v\|_{H^1(\Omega)},
 \end{align*}
 where $\varepsilon_{\max}=\max\left\{ \varepsilon_{j}, ~j=1,...,n \right\}$.
A simple calculation yields
\begin{align}\label{aco}
 a_{3} (\breve{u}, \breve{u}) =\overset{n}{\underset{j=1}{\sum}}\int_{\Omega_{j}}(s\mu)^{-1}|\nabla\breve{u}_{j}|^{2}+s\varepsilon_{j}|\breve{u}_{j}|^{2}
{\rm d} \boldsymbol \rho-\overset{n}{\underset{j=1}{\sum}}\overset{n}{\underset{i=1}{\sum}}\langle(s\mu)^{-1}\mathscr B \tilde{\breve{u}}_{i}, \tilde{\breve{u}}_{j}\rangle_{\Gamma_{j}}.
\end{align}
Taking the real part of \eqref{aco} and using Lemma \ref{ach}, we get
\begin{align}\label{acp}
 {\rm Re} \left(  a_{3} (\breve{u}, \breve{u}) \right) \geq C_{1} \frac{s_1}{|s|^2} \left( \|\nabla \breve{u}\|^2_{L^2 (\Omega)^2}  +\|s \breve{u}\|^2_{L^2 (\Omega)}\right),
\end{align}
where $C_{1}=\min\{\mu^{-1},1\}$.
It follows from the Lax--Milgram lemma that the variational problem \eqref{ack} has a unique solution $\breve{u} \in H^1_{\rm S} (\Omega).$
Moreover, we have from \eqref{ack} that
\begin{align}\label{acq}
 |a_{3}(\breve{u}, \breve{u})| \leq |s|^{-1} \|\breve{g}\|_{H^{-1/2} (\Gamma)} \|s \breve{u}\|_{L^2 (\Omega)}.
\end{align}
Combining \eqref{acp}--\eqref{acq} leads to
\begin{align*}
 \|\nabla \breve{u}\|^2_{L^2 (\Omega)^2}  +\|s \breve{u}\|^2_{L^2 (\Omega)} \lesssim s_1^{-1}  \| s \breve{g}\|_{H^{-1/2} (\Gamma)} \|s \breve{u}\|_{L^2 (\Omega)},
\end{align*}
which implies the   estimate  of \eqref{acn} after applying the Young's inequality.

\end{proof}

\subsubsection{well-posedness in the time-domain}
Using the time-domain transparent boundary condition, we consider the reduced initial-boundary value problem:
\begin{align}\label{acr}
\begin{cases}
  \varepsilon \partial_t^2 u - \nabla \cdot \left(  \mu^{-1} \nabla u\right)=0 \quad  &\text {in} ~~~\Omega,~~~t>0,\\
   u\big|_{t=0}=0, \quad \partial_t u \big|_{t=0} =0\quad & \text {in}~~~\Omega,\\
  u=0  \quad & \text {on}~~~S, ~~~~t>0,\\
  \partial_r u = \mathscr T u+g \quad & \text {on}~~~\Gamma,~~~~t>0.
\end{cases}
\end{align}

\begin{theo}\label{acy}
The initial-boundary problem \eqref{acr} has a unique solution $u$, which satisfies
\begin{align*}
u\in L^{2}(0, T; H^{1}_{\rm S}(\Omega))\cap H^{1}(0, T; L^{2}(\Omega)),
\end{align*}
and the stability estimate
\begin{align} \label{acz}
 \begin{split}
 \max\limits_{t\in[t,T]}&\left(\|\partial_t u\|_{L^2 (\Omega)}  +\|\partial_{t}(\nabla u)\|_{L^2 (\Omega)^2}\right) \\
 & \lesssim (\|g\|_{L^{1}(0, T; H^{-1/2}(\Gamma))}+ \max\limits_{t\in[t,T]}\|\partial_{t}g\|_{H^{-1/2}(\Gamma)}+\|\partial^{2}_{t}g\|_{L^{1}(0, T; H^{-1/2}(\Gamma))}).
 \end{split}
\end{align}
\end{theo}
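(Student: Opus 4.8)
The plan is to prove Theorem \ref{acy} by mirroring the argument already carried out for the two cavity case in Theorem \ref{aay}, since the only new feature is that all $n$ apertures are coupled rather than just two. The proof splits into two independent parts: first the existence and regularity assertion $u\in L^{2}(0,T;H^{1}_{\rm S}(\Omega))\cap H^{1}(0,T;L^{2}(\Omega))$, obtained through the Laplace transform and Parseval's identity, and second the stability estimate \eqref{acz}, obtained by an energy method driven by the dissipativity Lemma \ref{aci}.

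For the regularity part, I would start from the frequency-domain bound \eqref{acn} of Theorem \ref{acm}, namely $\|\nabla\breve u\|_{L^{2}(\Omega)^{2}}+\|s\breve u\|_{L^{2}(\Omega)}\lesssim s_{1}^{-1}\|s\breve g\|_{H^{-1/2}(\Gamma)}$, and bound the right-hand side by $s_{1}^{-1}|s|\,\|\breve u^{\rm inc}+\breve u^{\rm r}\|_{H^{1}(\Omega)}$. Since $\breve u$ is holomorphic in $s$ on the half-plane $s_{1}>\sigma_{0}>0$, Lemma \ref{A2} guarantees that the inverse Laplace transform exists and is supported in $(0,\infty)$. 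Applying the Parseval identity \eqref{PI} converts the $s$-integral into the time integral $\int_{0}^{\infty}e^{-2s_{1}t}(\|\nabla u\|_{L^{2}(\Omega)^{2}}^{2}+\|\partial_{t}u\|_{L^{2}(\Omega)}^{2})\,{\rm d}t$. Using the vanishing initial data for $u^{\rm inc}+u^{\rm r}$ to rewrite $|s|^{2}(\breve u^{\rm inc}+\breve u^{\rm r})=2s_{1}\mathscr L(\partial_{t}(u^{\rm inc}+u^{\rm r}))-\mathscr L(\partial_{t}^{2}(u^{\rm inc}+u^{\rm r}))$ (and the analogous identity for the gradient), a second application of \eqref{PI} places the solution in the stated spaces, exactly as in the one cavity proof of Theorem \ref{ES2}.

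For the stability part, I would introduce the energy function $e(t)=\|\varepsilon^{1/2}\partial_{t}u(\cdot,t)\|_{L^{2}(\Omega)}^{2}+\|\mu^{-1/2}\nabla u(\cdot,t)\|_{L^{2}(\Omega)^{2}}^{2}$, differentiate in $t$, and integrate by parts on each subdomain $\Omega_{j}$ using the equation in \eqref{aca}. Summing over $j$ produces, on the boundary, the full coupled double sum $2\,{\rm Re}\int_{0}^{t}\sum_{j}\sum_{i}\mu^{-1}\langle\mathscr T u_{i},\partial_{t}u_{j}\rangle_{\Gamma_{j}}\,{\rm d}\tau$ together with the forcing contribution $2\,{\rm Re}\int_{0}^{t}\sum_{j}\langle g,\partial_{t}u_{j}\rangle_{\Gamma_{j}}\,{\rm d}\tau$. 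This is where Lemma \ref{aci} is essential: it shows that the entire coupled sum is dissipative, $-{\rm Re}\int_{0}^{t}\sum_{j}\sum_{i}\langle\mathscr T\tilde u_{i},\partial_{t}\tilde u_{j}\rangle_{\Gamma_{j}}\,{\rm d}\tau\geq 0$, so the boundary coupling can be discarded and only the $g$-term survives, yielding $e(t)\lesssim(\max_{t}\|\partial_{t}u\|_{H^{1}(\Omega)})\|g\|_{L^{1}(0,T;H^{-1/2}(\Gamma))}$ after the trace estimate of Lemma \ref{tt10}.

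Because the right-hand side still contains the uncontrolled quantity $\partial_{t}\nabla u$, I would next differentiate \eqref{acr} in $t$, observe that $\partial_{t}u$ solves the same system with $g$ replaced by $\partial_{t}g$, and set up the companion energy $\tilde e(t)=\|\varepsilon^{1/2}\partial_{t}^{2}u\|_{L^{2}(\Omega)}^{2}+\|\mu^{-1/2}\partial_{t}(\nabla u)\|_{L^{2}(\Omega)^{2}}^{2}$. Integrating the forcing term by parts in $t$ shifts one derivative onto $g$, producing the $\max_{t}\|\partial_{t}g\|_{H^{-1/2}(\Gamma)}$ and $\|\partial_{t}^{2}g\|_{L^{1}(0,T;H^{-1/2}(\Gamma))}$ contributions. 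Combining the two energy inequalities bounds $\max_{t}\|\partial_{t}u\|_{H^{1}(\Omega)}^{2}$ by the product of $\max_{t}\|\partial_{t}u\|_{H^{1}(\Omega)}$ with the prescribed data norms, and a final application of Young's inequality gives \eqref{acz}. The main obstacle is not any single computation but correctly organizing the cross-aperture boundary terms so that they assemble into the exact double-sum form covered by Lemma \ref{aci}; once that bookkeeping is respected, the coupling disappears and the scalar estimates proceed verbatim as in the two cavity case.
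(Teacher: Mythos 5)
Your proposal is correct and follows essentially the same route as the paper: the regularity assertion via the frequency-domain estimate of Theorem \ref{acm}, holomorphy, Lemma \ref{A2} and the Parseval identity \eqref{PI}, and the stability estimate via the two energy functions $e_5$, $e_6$, the dissipativity of the coupled double sum from Lemma \ref{aci}, the time-differentiated system with $g$ replaced by $\partial_t g$, and a final application of Young's inequality. No gaps; the bookkeeping of the cross-aperture terms into the exact double-sum form of Lemma \ref{aci} is precisely how the paper's proof proceeds.
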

\begin{proof}
Using the same way of the two cavity scattering problem, we can get that
\begin{align*}
  u\in L^2 \left( 0, T; H^1_{\rm S} (\Omega) \right) \cap H^1 \left( 0, T; L^2 (\Omega) \right).
\end{align*}

Next, we prove the stability.
For any $0<t<T$, define the energy function
\begin{align*}
 e_5 (t) =\| \varepsilon^{1/2} \partial_t u (\cdot, t)\|^2_{L^2 (\Omega)} +\|  \mu^{-1/2} \nabla u (\cdot, t)\|^2_{L^2 (\Omega)^2}.
\end{align*}
It follows from \eqref{acr} and integration by parts that
\begin{align*}
\int_0^t e_5'(\tau ) {\rm d} \tau
 &= 2 {\rm Re}\overset{n}{\underset{j=1}{\sum}}\int_0^t \int_{\Omega_{j}} \left(  \nabla \cdot \left(  \mu^{-1} \nabla u_{j}\right) \partial_t \bar u_{j} +\mu^{-1} (\nabla \partial_t u_{j}) \cdot \nabla \bar u_{j} \right) {\rm d} \boldsymbol \rho {\rm d} \tau.
\end{align*}
Since $e_{5}(0)=0$, we obtain from Lemma \ref{aci} that
\begin{align*}
e_5(t)
 &= 2{\rm Re} \overset{n}{\underset{j=1}{\sum}}\overset{n}{\underset{i=1}{\sum}}\int_0^t\mu^{-1}\langle\mathscr T u_{i}, \partial_{t}u_{j}\rangle_{\Gamma_{j}}{\rm d}t+2{\rm Re}\overset{n}{\underset{j=1}{\sum}} \int_0^t\langle g,\partial_{t}u_{j}\rangle_{\Gamma_{j}}{\rm d}t\\
 &\leq 2{\rm Re}\overset{n}{\underset{j=1}{\sum}} \int_0^t\|g\|_{H^{-1/2}(\Gamma_{j})}\|\partial_{t}u_{j}\|_{H^{1/2}(\Gamma_{j})}{\rm d}t\\
 &\lesssim2{\rm Re}\int_0^t\|g\|_{H^{-1/2}(\Gamma)}\|\partial_{t}u\|_{H^{1}(\Omega)}{\rm d}t\\
 &\leq2(\max\limits_{t\in[t,T]}\|\partial_{t}u\|_{H^{1}(\Omega)})\|g\|_{L^{1}(0, T; H^{-1/2}(\Gamma))} .
\end{align*}

Taking the derivative of \eqref{au2} with respect to $t$, we know that $\partial_{t}u$ also satisfies the same equations with $g$ replaced by $\partial_{t}g$. In order to control $\| \partial_{t}(\nabla u (\cdot, t))\|_{L^2 (\Omega)^2}$,
consider the  energy function
\begin{align*}
e_6 (t)= \|\varepsilon^{1/2} \partial_t^{2} u (\cdot, t)\|^2_{L^2 (\Omega)} +\|\mu^{-1/2} \partial_{t}(\nabla u (\cdot, t))\|^2_{L^2 (\Omega)^2}.
\end{align*}
Similarly, we  get the estimate
\begin{align*}
e_6 (t)
&\leq2{\rm Re} \overset{n}{\underset{j=1}{\sum}}\int_0^t\int_{\Gamma_{j}} (\partial_{t}g )\partial^{2}_{t}\bar{u}_{j} {\rm d}\gamma_{j}{\rm d}t\\
&=2{\rm Re} \overset{n}{\underset{j=1}{\sum}} \int_{\Gamma_{j}}(\partial_{t}g)\partial_{t}\bar{u}_{j}|^{t}_{0}{\rm d}\gamma_{j}-2{\rm Re}\overset{n}{\underset{j=1}{\sum}} \int_0^t\int_{\Gamma_{j}}(\partial^{2}_{t}g) \partial_{t}\bar{u}_{j} {\rm d}\gamma_{j}{\rm d}t\\
&\leq2(\max\limits_{t\in[t,T]}\|\partial_{t}u\|_{H^{1}(\Omega)})(\max\limits_{t\in[t,T]}\|\partial_{t}g\|_{H^{-1/2}(\Gamma)}+\|\partial^{2}_{t}g\|_{L^{1}(0, T; H^{-1/2}(\Gamma))}).
\end{align*}
Combing the above estimates, we can obtain
\begin{align*}
\max\limits_{t\in[t,T]}\|\partial_{t}u\|^{2}_{H^{1}(\Omega)}\lesssim(\|g\|_{L^{1}(0, T; H^{-1/2}(\Gamma))}+\max\limits_{t\in[t,T]}\|\partial_{t}g\|_{H^{-1/2}(\Gamma)}+\|\partial^{2}_{t}g\|_{L^{1}(0, T; H^{-1/2}(\Gamma))})\|\partial_{t}u\|_{H^{1}(\Omega)},
\end{align*}
which give the estimate \eqref{acz} after applying the Young's inequality.
\end{proof}
\subsection{A priori estimates of the multiple cavity problem}

In this section, for the multiple cavity scattering problem, we also derive a priori estimates for the total field with a minimum regularity requirement for the data and an explicit dependence on the time.

The variation formulation of \eqref{aca} is to find $u_{j}\in H^{1}_{\rm S_j}(\Omega_{j})$ for all $t>0$ such that
\begin{align*}
\int_{\Omega_{j}}\varepsilon_{j}(\partial^{2}_{t}u_{j})\bar{v}_{j}{\rm d}\boldsymbol \rho=-\int_{\Omega_{j}}\mu^{-1}\nabla u_{j}\cdot\nabla\bar{v}_{j}{\rm d} \boldsymbol \rho+\overset{n}{\underset{i=1}{\sum}}\int_{\Gamma_{j}}\mu^{-1}(\mathscr{T}u_{i})\bar{v}_{j}{\rm d}\gamma_{j}+\int_{\Gamma_{j}} g\bar{v}_{j}{\rm d}\gamma_{j},\quad j=1, \cdots, n.
\end{align*}
This is equivalent to: find $u\in H^{1}_{\rm S}(\Omega)$ with $u|_{\Omega_{j}} =u_{j}\in H^{1}_{\rm S_j}(\Omega_{j})$,
such that for all $v\in H^{1}_{\rm S}(\Omega)$ with  $v_{j}=v|_{\Omega_{j}}\in H^{1}_{\rm S_j}(\Omega_{j})$, it holds
\begin{align*}
c_{2}(u, v)=\overset{n}{\underset{j=1}{\sum}}\langle g, v_{j}\rangle_{\Gamma_{j}},
\end{align*}
where the sesquilinear form
\begin{align*}
c_{2}(u, v)&=\overset{n}{\underset{j=1}{\sum}}\left(\int_{\Omega_{j}}\varepsilon_{j}(\partial^{2}_{t}u_{j})\bar{v}_{j}{\rm d}\boldsymbol \rho+\int_{\Omega_{j}}\mu^{-1}\nabla u_{j}\cdot\nabla\bar{v}_{j}{\rm d}\boldsymbol \rho \right)-\overset{n}{\underset{j=1}{\sum}}\overset{n}{\underset{i=1}{\sum}}\int_{\Gamma_{j}}\mu^{-1}(\mathscr{T}u_{i})\bar{v}_{j}{\rm d}\gamma_{j}.
\end{align*}
\begin{theo}
Let $u\in H^{1}_{\rm S}(\Omega)$ be the solution of \eqref{acr}. Given $g\in L^{1}(0, T; H^{-1/2}(\Gamma))$, we have for any $T>0$ that
\begin{align*}
  \|u\| _{L^{\infty} (0, T; L^2 (\Omega))}+\|\nabla u\| _{L^{\infty} (0, T; L^2 (\Omega)^2)}
  &\lesssim T \|g\|_{L^1 (0,T; H^{-1/2}(\Gamma))} +\|\partial_{t}g\|_{L^1 (0, T; H^{-1/2}(\Gamma))},
  \end{align*}
  and
  \begin{align*}
 \|u\|_{L^{2} (0, T; L^2 (\Omega))}+\|\nabla u\| _{L^{2} (0, T; L^2 (\Omega)^2)}
 &\lesssim T^{3/2}\|g\|_{L^1 (0,T; H^{-1/2}(\Gamma))} +T^{1/2} \|\partial_{t}g\|_{L^1 (0, T; H^{-1/2}(\Gamma))}.
\end{align*}
\end{theo}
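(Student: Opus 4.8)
The plan is to repeat the argument of Theorem~\ref{abj} almost verbatim, replacing every two-fold sum by an $n$-fold sum over the cavities and invoking Lemma~\ref{aci} in place of Lemma~\ref{aah}. For fixed $0<\theta<T$, I first introduce the auxiliary function
\[
\psi_1(\boldsymbol\rho,t)=\int_t^\theta u(\boldsymbol\rho,\tau)\,{\rm d}\tau,
\]
set $\psi_1^{(j)}:=\psi_1|_{\Omega_j}$, and record the identities \eqref{F1}--\eqref{F2}. Taking the test functions $v_j=\psi_1^{(j)}$ in the variational formulation and integrating over $[0,\theta]$, the initial conditions collapse the inertial contribution ${\rm Re}\sum_{j=1}^n\int_0^\theta\int_{\Omega_j}\varepsilon_j(\partial_t^2 u_j)\bar\psi_1^{(j)}\,{\rm d}\boldsymbol\rho\,{\rm d}t$ to $\tfrac12\sum_{j=1}^n\|\varepsilon_j^{1/2}u_j(\cdot,\theta)\|_{L^2(\Omega_j)}^2$, exactly as in the derivation of \eqref{abn}.

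The crucial step is the coupled transparent-boundary term
\[
{\rm Re}\sum_{j=1}^n\sum_{i=1}^n\int_0^\theta\mu^{-1}\langle\mathscr T u_i,\psi_1^{(j)}\rangle_{\Gamma_j}\,{\rm d}t.
\]
Here I would use property \eqref{F2} to rewrite each summand as a time convolution tested against $\bar u_j$, then pass to the frequency side via the Parseval identity \eqref{PI} and apply Lemma~\ref{ach}; this shows the entire double sum is $\le 0$ after letting $s_1\to0$. The point, and the main obstacle, is that nonpositivity holds only for the fully assembled expression, which Lemma~\ref{ach} controls through $|\sum_{j}\breve{\tilde u}_j|^2$, and not term by term, so the double sum over $i$ and $j$ must be kept intact throughout the manipulation.

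The remaining source term ${\rm Re}\sum_{j=1}^n\int_0^\theta\int_{\Gamma_j}g\,\bar\psi_1^{(j)}$ is bounded, using \eqref{F2}, the trace theorem (Lemma~\ref{tt10}), and the Cauchy--Schwarz inequality, by $\bigl(\int_0^\theta\|g\|_{H^{-1/2}(\Gamma)}\,{\rm d}t\bigr)\bigl(\int_0^\theta\|u\|_{H^1(\Omega)}\,{\rm d}t\bigr)$; combining the three pieces yields the analogue of \eqref{abq}. Next, differentiating \eqref{acr} in $t$ so that $\partial_t u$ solves the same system with $g$ replaced by $\partial_t g$, I introduce $\psi_2(\boldsymbol\rho,t)=\int_t^\theta\partial_t u(\boldsymbol\rho,\tau)\,{\rm d}\tau$ and rerun the same computation: integration by parts turns the left-hand side into $\tfrac12\|\mu^{-1/2}\nabla u(\cdot,\theta)\|^2_{L^2(\Omega)^2}$, and one extra integration by parts in $t$ on the data term produces the analogue of \eqref{abw}.

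Finally, adding the two resulting estimates gives, for every $\theta\in[0,T]$, a bound of the form of \eqref{abz} with $n$-fold sums. Taking the $L^\infty$-norm in $\theta$ and absorbing $\|u\|_{L^\infty(0,T;H^1(\Omega))}$ by Young's inequality delivers the first asserted estimate, while integrating in $\theta$ over $[0,T]$, applying Cauchy--Schwarz, and using Young's inequality again delivers the second; the powers $T$, $T^{3/2}$, and $T^{1/2}$ arise exactly as in the passage from \eqref{abz} to \eqref{abk} and \eqref{abl}. Since all the cavity-by-cavity estimates are uniform in $n$ and the only coupling enters through the sign-definite transparent-boundary term handled by Lemma~\ref{ach}, no new difficulty appears beyond the bookkeeping of the $n$-fold sums.
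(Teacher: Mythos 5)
Your proposal is correct and follows exactly the route the paper intends: the paper omits this proof, stating it is "similar in nature" to that of Theorem \ref{abj}, and your argument is precisely that proof with two-fold sums replaced by $n$-fold sums, Lemma \ref{aag}/\ref{aah} replaced by Lemma \ref{ach}/\ref{aci}, and the coupled transparent-boundary double sum kept assembled so its sign can be read off from $|\sum_j\breve{\tilde u}_j|^2$. No discrepancy with the paper's approach.
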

The proof is similar in
nature as that of the two cavity model problem and is omitted here for brevity.

\section{Conclusion}\label{CL}
The problem of electromagnetic scattering by cavities embedded in the infinite two-dimensional ground plane is an
important area of research.
In this paper, we present the multiple cavity scattering problem in time-domain. We reduce the  overall scattering problem to coupled  scattering problem in bounded domain
 via the introduction of a novel transparent boundary condition over the cavity aperture in time-domain.
 The  uniqueness, existence  and stability of the reduced problem are obtained in frequency domain and time-domain, respectively.
 The main ingredients of the proofs are the
Laplace transform, the Lax-Milgram lemma, and the Parseval identity. Moreover,
by directly considering the variational problem of the time-domain wave equation,
we obtain a priori estimates with an explicit dependence on the time.


\begin{thebibliography}{10}

\bibitem{Ammari2000}
H.~Ammari, G.~Bao, and A.~W. Wood.
\newblock An integral equation method for the electromagnetic scattering from
  cavities.
\newblock {\em Math. Methods Appl. Sci.}, 23(12):1057--1072, 2000.

\bibitem{AmmariBao2002}
H.~Ammari, G.~Bao, and A.~W. Wood.
\newblock Analysis of the electromagnetic scattering from a cavity.
\newblock {\em Japan J. Indust. Appl. Math.}, 19(2):301--310, 2002.

\bibitem{AmmariBaoWood2002}
H.~Ammari, G.~Bao, and A.~W. Wood.
\newblock A cavity problem for {M}axwell's equations.
\newblock {\em Methods Appl. Anal.}, 9(2):249--259, 2002.

\bibitem{BaoGao2011}
G.~Bao, J.~Gao, and P.~Li.
\newblock Analysis of direct and inverse cavity scattering problems.
\newblock {\em Numer. Math. Theory Methods Appl.}, 4(3):335--358, 2011.

\bibitem{BaoGao2012}
G.~Bao, J.~Gao, J.~Lin, and W.~Zhang.
\newblock Mode matching for the electromagnetic scattering from
  three-dimensional large cavities.
\newblock {\em IEEE Trans. Antennas and Propagation}, 60(4):2004--2010, 2012.

\bibitem{BaoGao2018}
G.~Bao, Y.~Gao, and P.~Li.
\newblock Time-domain analysis of an acoustic-elastic interaction problem.
\newblock {\em Arch. Ration. Mech. Anal.}, 229(2):835--884, 2018.

\bibitem{Baosun2005}
G.~Bao and W.~Sun.
\newblock A fast algorithm for the electromagnetic scattering from a large
  cavity.
\newblock {\em SIAM J. Sci. Comput.}, 27(2):553--574, 2005.

\bibitem{Bao2005An}
G.~Bao and W.~Zhang.
\newblock An improved mode-matching method for large cavities.
\newblock {\em IEEE Antennas Wirel. Propag. Lett.}, 4(1):393--396, 2005.

\bibitem{Chen2008}
Z.~Chen and J.-C. N\'{e}d\'{e}lec.
\newblock On {M}axwell equations with the transparent boundary condition.
\newblock {\em J. Comput. Math.}, 26(3):284--296, 2008.

\bibitem{02}
A.~M. Cohen.
\newblock {\em Numerical methods for {L}aplace transform inversion}, volume~5
  of {\em Numerical Methods and Algorithms}.
\newblock Springer, New York, 2007.

\bibitem{03}
L.~C. Evans.
\newblock {\em Partial differential equations}, volume~19 of {\em Graduate
  Studies in Mathematics}.
\newblock American Mathematical Society, Providence, RI, second edition, 2010.

\bibitem{GaoLi2016}
Y.~Gao and P.~Li.
\newblock Analysis of time-domain scattering by periodic structures.
\newblock {\em J. Differential Equations}, 261(9):5094--5118, 2016.

\bibitem{GaoLi2017}
Y.~Gao and P.~Li.
\newblock Electromagnetic scattering for time-domain {M}axwell's equations in
  an unbounded structure.
\newblock {\em Math. Models Methods Appl. Sci.}, 27(10):1843--1870, 2017.

\bibitem{GaoLiLi2018}
Y.~Gao, P.~Li, and Y.~Li.
\newblock Analysis of time-domain elastic scattering by an unbounded structure.
\newblock {\em Math. Methods Appl. Sci.}, 41(16):7032--7054, 2018.

\bibitem{GaoLiZhang2017}
Y.~Gao, P.~Li, and B.~Zhang.
\newblock Analysis of transient acoustic-elastic interaction in an unbounded
  structure.
\newblock {\em SIAM J. Math. Anal.}, 49(5):3951--3972, 2017.

\bibitem{Jin1998}
J.~Jin.
\newblock Electromagnetic scattering from large, deep, and arbitrarily-shaped
  open cavities.
\newblock {\em Electromagnetics}, 18(1):3--34, 1998.

\bibitem{Jin2002}
J.~Jin.
\newblock {\em The finite element method in electromagnetics}.
\newblock John Wiley \& Sons, New York, second edition, 2002.

\bibitem{Jin2003A}
J.~Jin, J.~Lin, and Z.~Lou.
\newblock A fully high-order finite-element simulation of scattering by deep
  cavities.
\newblock {\em IEEE Trans. Antennas Propag.}, 51(9):2420--2429, 2003.

\bibitem{Jin2009Finite}
J.~Jin and D.~J. Riley.
\newblock {\em Finite Element Analysis of Antennas and Arrays}.
\newblock Wiley, second edition, 2009.

\bibitem{Jin1991}
J.~Jin and J.~L. Volakis.
\newblock A hybrid finite element method for scattering and radiation by
  microstrip path antennas and arrays residing in a cavity.
\newblock {\em IEEE Trans. Antennas Propag.}, 39(11):1598--1604, 1991.

\bibitem{LiandHuang2013}
J.~Li and Y.~Huang.
\newblock {\em Time-domain finite element methods for {M}axwell's equations in
  metamaterials},
\newblock Springer, Heidelberg, 2013.

\bibitem{Li2012}
P.~Li.
\newblock An inverse cavity problem for {M}axwell's equations.
\newblock {\em J. Differential Equations}, 252(4):3209--3225, 2012.

\bibitem{Li2018}
P.~Li.
\newblock A survey of open cavity scattering problems.
\newblock {\em J. Comput. Math.}, 36(1):1--16, 2018.

\bibitem{LiWood2015}
P.~Li, L.-L. Wang, and A.~Wood.
\newblock Analysis of transient electromagentic scattering from a
  three-dimensional open cavity.
\newblock {\em SIAM J. Appl. Math.}, 75(4):1675--1699, 2015.

\bibitem{LiandWood2013}
P.~Li and A.~Wood.
\newblock A two-dimensional {H}elmhotlz equation solution for the multiple
  cavity scattering problem.
\newblock {\em J. Comput. Phys.}, 240:100--120, 2013.

\bibitem{LiWuZheng2012}
P.~Li, H.~Wu, and W.~Zheng.
\newblock An overfilled cavity problem for {M}axwell's equations.
\newblock {\em Math. Methods Appl. Sci.}, 35(16):1951--1979, 2012.

\bibitem{04}
F.~Tr\`eves.
\newblock {\em Basic linear partial differential equations}.
\newblock Academic Press, New York-London, 1975.
\newblock Pure and Applied Mathematics, Vol. 62.

\bibitem{wood2017}
R.~Uber, A.~Wood, and M.~Havrilla.
\newblock Analysis and numerical solution of transient electromagnetic
  scattering from two cavities.
\newblock {\em J. Comput. Phys.}, 343:217--234, 2017.

\bibitem{wood03}
T.~Van and A.~W. Wood.
\newblock Finite element analysis of electromagnetic scattering from a cavity.
\newblock {\em IEEE Trans. Antennas and Propagation}, 51(1):130--137, 2003.

\bibitem{WangZhangZhao2012}
L.-L. Wang, B.~Wang, and X.~Zhao.
\newblock Fast and accurate computation of time-domain acoustic scattering
  problems with exact nonreflecting boundary conditions.
\newblock {\em SIAM J. Appl. Math.}, 72(6):1869--1898, 2012.
\bibitem{Wei2019}
C.~Wei, J.~Yang, Analysis of a time-dependent fluid-solid interaction
problem above a local rough surface,  \newblock {\em Sci. China Math. }, to appear.

\end{thebibliography}

\end{document}